\documentclass[reqno,11pt]{amsart}

\usepackage{graphics}
\usepackage{amssymb, amsmath}
\usepackage{version}
\usepackage{fancybox}
\usepackage{bm}
\usepackage{xcolor}
\usepackage{mathrsfs}
\usepackage{mathtools} 
%
%
\theoremstyle{theorem}
\newtheorem{theorem}{\sc Theorem}[section]
\newtheorem{lemma}[theorem]{\sc Lemma}

\newtheorem{proposition}[theorem]{\sc Proposition}
\newtheorem{corollary}[theorem]{\sc Corollary}

\theoremstyle{definition}
\newtheorem{definition}[theorem]{\sc Definition}
\newtheorem*{notation*}{Notation}

\theoremstyle{remark}
\newtheorem{remark}[theorem]{\sc Remark}

%
%
\makeatletter

\@addtoreset{equation}{section}

\renewcommand{\lefteqn}{\MoveEqLeft}

\renewcommand{\d}{\text{\rm d}}

\newcommand{\vep}{\varepsilon}
 
\newcommand{\e}{{\rm e}}

\newcommand{\R}{\mathbb{R}}
\newcommand{\N}{\mathbb{N}}

\newcommand{\A}{\mathcal{A}}
\newcommand{\B}{\mathcal{B}}

\newcommand{\J}{\mathcal{J}}

\DeclareMathOperator*{\esssup}{ess\,sup}

\makeatletter
\newcommand{\opnorm}{\@ifstar\@opnorms\@opnorm}
\newcommand{\@opnorms}[1]{%
  \left|\mkern-1.5mu\left|\mkern-1.5mu\left|
   #1
  \right|\mkern-1.5mu\right|\mkern-1.5mu\right|
}
\newcommand{\@opnorm}[2][]{%
  \mathopen{#1|\mkern-1.5mu#1|\mkern-1.5mu#1|}
  #2
  \mathclose{#1|\mkern-1.5mu#1|\mkern-1.5mu#1|}
}
\makeatother

\allowdisplaybreaks[4]

%
%

\newcommand{\prf}[1]{}  
\begin{document}
\title[Time-fractional gradient flows for nonconvex energies]{Time-fractional gradient flows for nonconvex energies in Hilbert spaces\prf{\\{\tt [Extended version]}}}
%
%
\author{Goro Akagi}
\address[Goro Akagi]{Mathematical Institute and Graduate School of Science, Tohoku University, Aoba, Sendai 980-8578, Japan}
\email{goro.akagi@tohoku.ac.jp}
\author{Yoshihito Nakajima}
\address[Yoshihito Nakajima]{Graduate School of Science, Tohoku University, Aoba, Sendai 980-8578, Japan}
\email{yoshihito.nakajima.p8@dc.tohoku.ac.jp}
\thanks{G.A.~is supported by JSPS KAKENHI Grant Numbers JP24H00184, JP21KK0044, JP21K18581, JP20H01812 and JP20H00117. 
Y.N. is supported by JST SPRING, Grant Number JPMJSP2114. 
This work was also supported by the Research Institute for Mathematical Sciences, an International Joint Usage/Research Center located in Kyoto University.}
\date{\today}
\subjclass[2020]{\emph{Primary}: 47J35; \emph{Secondary}: 35K61} 
\keywords{Time-fractional gradient flows ; subdifferential operator ; fractional chain-rule formula ; $p$-Laplace subdiffusion equation ; local and global existence of strong solutions}
\maketitle
\begin{abstract}
This article is devoted to presenting an abstract theory on time-fractional gradient flows for nonconvex energy functionals in Hilbert spaces. Main results consist of local and global in time existence of (continuous) strong solutions to time-fractional evolution equations governed by the difference of two subdifferential operators in Hilbert spaces. To prove these results, fractional chain-rule formulae, a Lipschitz perturbation theory for convex gradient flows and Gronwall-type lemmas for nonlinear Volterra integral inequalities are developed. They also play a crucial role to cope with the lack of continuity (in time) of energies due to the subdiffusive nature of the issue. Moreover, the abstract theory is applied to the Cauchy--Dirichlet problem for some $p$-Laplace subdiffusion equations with blow-up terms complying with the so-called Sobolev (sub)critical growth condition.
\end{abstract}

\section{Introduction}\label{S:Intro}

Let $H$ be a real Hilbert space equipped with an inner product $(\,\cdot\,,\,\cdot\,)_H$ and the norm $\|\cdot\|_H = \sqrt{(\,\cdot\,,\,\cdot\,)_H}$. For $i = 1,2$, let $\varphi^i : H \to (-\infty,\infty]$ be a proper (i.e., $\varphi^i \not\equiv \infty$) lower-semicontinuous convex functional with the \emph{effective domain},
$$
D(\varphi^i) := \left\{ w \in H \colon \varphi^i(w) < \infty \right\} \neq \emptyset,
$$
and define the \emph{subdifferential operator} $\partial \varphi^i : H \to 2^H$ of $\varphi^i$ by
$$
\partial \varphi^i(z) := \left\{ \xi \in H \colon \varphi^i(v)-\varphi^i(z) \geq (\xi,v-z)_H \ \mbox{ for } v \in D(\varphi^i)\right\}
$$
for $z \in D(\varphi^i)$ with the domain $D(\partial \varphi^i) = \{w \in D(\varphi^i) \colon \partial \varphi^i(w) \neq \emptyset\}$. In this article, we are concerned with the abstract Cauchy problem for time-fractional evolution equations of the form,
\begin{equation}\label{ee}
 \frac{\d}{\d t} \left[ k * (u - u_0) \right] (t) + \partial \varphi^1(u(t)) - \partial \varphi^2(u(t)) \ni f(t) \ \mbox{ in } H, \quad 0 < t < T,
\end{equation}
where $T \in (0,\infty]$, $u_0 \in D(\varphi^1) \cap D(\varphi^2)$ and $f : (0,T) \to H$ are prescribed, and the convolution $k * (u-u_0)$ with a kernel $k \in L^1_{\rm loc}([0,\infty))$ is defined by
$$
(k * w)(t) := \int^t_0 k(t-s) w(s) \, \d s \quad \mbox{ for } \ w \in L^1_{\rm loc}([0,\infty);H), \ \ t \geq 0.
$$
Here and henceforth, we assume:
\begin{enumerate}
 \item[({K})] The kernel $k \in L^1_{\rm loc}([0,\infty))$ is nonnegative and nonincreasing. There exists a nonnegative and nonincreasing kernel $\ell \in L^1_{\rm loc}([0,\infty))$ such that
$$
(k * \ell)(t) = \int^t_0 k(t-s) \ell(s) \, \d s = 1 \quad \mbox{ for } \ t \geq 0.
$$
\end{enumerate}
Hence $k$ is a \emph{completely positive kernel} (see~\cite[Theorem 2.2]{ClNo81}). A typical example of $k$ satisfying ({K}) is the so-called \emph{Riemann--Liouville kernel},
\begin{equation}\label{RL}
k_\alpha(t) = \frac{t^{-\alpha}}{\Gamma(1-\alpha)} \quad \mbox{ for } \ t > 0 \ \mbox{ and } \ 0 < \alpha < 1,
\end{equation}
and then, the nonlocal time-derivative $(\d/\d t)[k_\alpha * (u - u_0)]$ corresponds to the $\alpha$-th order Riemann--Liouville derivative $\partial_t^\alpha(u-u_0)$ of $u - u_0$, which also coincides with the $\alpha$-th order Caputo derivative of $u$ if it is smooth.

The study of classical gradient flows dates back to the early work of Ha\"im Br\'ezis (see, e.g.,~\cite{HB1}) concerning the evolution equation
$$
\dfrac{\d u}{\d t}(t) + \partial \varphi(u(t)) \ni f(t) \ \mbox{ in } H, \quad 0 < t < T, \quad u(0) = u_0
$$
for a proper lower-semicontinuous \emph{convex} functional $\varphi$ on a Hilbert space $H$. 
The work of Br\'ezis is nowadays known as \emph{Br\'ezis--K\=omura theory} (see also~\cite{Komura}) and has been extended in various directions\prf{ (see, e.g.,~\cite{Ken75} for time-dependent subdifferentials)}. Among those, \^Otani~\cite{Otani77} significantly relaxed the convexity assumption of the functional $\varphi$ by studying the Cauchy problem for evolution equations governed by the difference of two subdifferential operators such as
\begin{equation}\label{otani77}
\dfrac{\d u}{\d t}(t) + \partial \varphi^1(u(t)) - \partial \varphi^2(u(t)) \ni f(t) \ \mbox{ in } H, \quad 0 < t < T, \quad u(0) = u_0,
\end{equation}
which can be regarded as gradient flows for (essentially) \emph{nonconvex} energies (see also~\cite{Otani79} for long-time dynamics and~\cite{AS16} for a variational analysis). Actually, in contrast with gradient flows for \emph{semiconvex} energies, the abstract Cauchy problem \eqref{otani77} can cover (even degenerate or singular) parabolic equations whose solutions blow up in finite time, and hence, one cannot generally expect global (in time) existence of solutions to \eqref{otani77}. We further refer the reader to~\cite{Otani82,Otani84} for a nonmonotone perturbation theory for (convex) gradient flows. Extensions of these results on classical gradient flows to time-fractional variants have been pursued only for gradient flows of \emph{convex} energies. In~\cite{A19}, the well-posedness of the abstract Cauchy problem
\begin{equation*}
 \frac{\d}{\d t} \left[ k * (u - u_0) \right] (t) + \partial \varphi(u(t)) \ni f(t) \ \mbox{ in } H, \quad 0 < t < T
\end{equation*}
is proved for proper lower-semicontinuous (semi)convex functionals $\varphi:H \to (-\infty,\infty]$ and $f \in L^2(0,T;H)$, and moreover, a Lipschitz perturbation theory is developed. Furthermore, the abstract theory is also applied to time-fractional variants of \emph{nonlinear diffusion equations} as well as \emph{Allen-Cahn equations}. We also refer the reader to~\cite{mono} and references therein for related works (see also~\cite{AV19,AB23,GW20,KRY20,ScWi24,VeZa08,VeZa10,VeZa15,VeZa17,WWZ21,Za05,Za08,Za13,Za10,Za19}). On the other hand, time-fractional gradient flows for (essentially) \emph{nonconvex} energies such as \eqref{ee} have never been studied so far.

The main purpose of the present paper is to establish an abstract theory on existence of strong solutions (continuous on $[0,T)$) to the Cauchy problem \eqref{ee}. To this end, there arise several significant difficulties from the presence of time-fractional derivatives. We immediately face a difficulty from the defect of chain-rule formula for time-fractional derivatives. Indeed, the chain-rule formula is a crucial device to analyze gradient flow equations. There is an alternative formula established in~\cite{A19}, from which a fractional chain-rule \emph{inequality} for subdifferentials is derived in a practical form instead of usual identities; however, it is still insufficient to analyze the present issue \eqref{ee}. Moreover, the semigroup property does not hold true for the time-fractional evolution equations, that is, the concatenation of two solutions may not be a solution any more. As a result, one cannot construct a global solution to \eqref{ee} by concatenating some local solutions (see Remark \ref{R:semigr} below). We further realize another difficulty due to the lack of continuity (in time) of the energy $t \mapsto \varphi^1(u(t))$, which is actually from the subdiffusive nature of the issue. In the previous studies on non-fractional equations, various arguments based on the continuity are potentially used here and there, e.g., to construct local (in time) solutions to evolution equations (see, e.g.,~\cite{Ishii77}). Hence we may need to develop an alternative scheme to establish a local existence result without exploiting the continuity of the energy.

Another aim of this paper is to apply the preceding abstract theory to the following Cauchy--Dirichlet problem for \emph{$p$-Laplace subdiffusion equations with blow-up terms}:
\begin{alignat}{4}
\partial_t^\alpha (u-u_0) - \Delta_p u - |u|^{q-2}u &= f \ &&\mbox{ in } \ \Omega \times (0,\infty),\label{pde}\\
u &= 0 &&\mbox{ on } \partial \Omega \times (0,\infty),\label{bc}
\end{alignat}
where $1 < p,q < \infty$, $\Omega$ is a smooth bounded domain of $\R^d$ with boundary $\partial \Omega$, $\partial_t^\alpha (u-u_0) = \partial_t [k_\alpha * (u-u_0)]$ denotes the Riemann--Liouville derivative of $u-u_0$ (i.e., Caputo derivative of $u$) of order $0 < \alpha < 1$ (see \eqref{RL}), $\Delta_p$ is the so-called \emph{$p$-Laplacian} given as
$$
\Delta_p w = \nabla \cdot \left(|\nabla w|^{p-2}\nabla w\right)
$$
and $u_0 = u_0(x)$ and $f = f(x,t)$ are prescribed. Equation \eqref{pde} is a time-fractional variant of $p$-Laplace diffusion equations with blow-up terms of the form
\begin{equation}
 \partial_t u - \Delta_p u - |u|^{q-2}u = f  \ \mbox{ in } \ \Omega \times (0,\infty),\label{pde-c}
\end{equation}
along with the initial condition $u|_{t = 0} = u_0$ in $\Omega$ (cf.~see~\cite{Fujita}). Equation \eqref{pde-c} was proposed in~\cite{Lions} and then studied in~\cite{Tsutsumi}, where local and global (in time) existence of weak solutions to the Cauchy--Dirichlet problem for \eqref{pde-c} is proved for $u_0 \in W^{1,p}_0(\Omega)$ based on Galerkin's method. Although the global existence is proved under the so-called Sobolev subcritical condition $q < p^* := dp/(d-p)_+$, the local existence is verified under a somewhat restrictive assumption on $q$. The abstract results established in~\cite{Otani77,Ishii77,Otani79,Otani82} are also applied to \eqref{pde-c} for studying existence and asymptotic behavior of its $L^2$-solutions under the assumption that $q \leq p^*/2 + 1$, which is strictly less than $p^*$ (on the other hand, it can be relaxed to $q < 2^*$ for the semilinear case $p = 2$ in~\cite{Otani82,Otani84}). Moreover, in~\cite{AO05}, an abstract theory for \eqref{otani77} is developed in a reflexive Banach space setting and it is also used to prove local (in time) existence of weak solutions to the Cauchy--Dirichlet problem for \eqref{pde-c} under the subcritical condition $q < p^*$ for $u_0 \in W^{1,p}_0(\Omega)$. Furthermore, in~\cite{A07}, local existence of weak solutions to the Cauchy--Dirichlet problem for \eqref{pde-c} is eventually proved for any $u_0 \in L^r(\Omega)$ and $q > p \geq 2$, provided that $r > d(q-p)/p$; this result can cover all the local existence results mentioned above \prf{(within the frame of weak solutions) }and even be regarded as an extension of Weissler's results~\cite{Wei80} on the (semilinear) Fujita equation. We also refer the reader to~\cite{Nakao86,FuOh96}. On the other hand, to the best of the authors' knowledge, there have been no contribution to the study of the Cauchy--Dirichlet problem \eqref{pde}, \eqref{bc} for the $p$-Laplace subdiffusion equation so far. In this paper, thanks to the abstract theory on \eqref{ee} along with a nonlinear Calder\'on--Zygmund theory, we prove local (in time) existence of $L^2$-solutions to \eqref{pde}, \eqref{bc} under the Sobolev subcritical condition $q < p^*$ for any $2d/(d+2) < p < \infty$. Moreover, global (in time) existence of $L^2$-solutions for small data is also verified for $q \leq p^*$ even including the Sobolev critical exponent $p^*$.

\bigskip
\noindent
{\bf Plan of the paper.} This paper is composed of eight sections. In Section \ref{S:main}, we shall exhibit main results of the present paper, which consist of local and global (in time) existence of strong solutions to the abstract Cauchy problem \eqref{ee} under certain assumptions. We also stress that the strong solutions are continuous on $[0,T)$. Section \ref{S:pre} is concerned with preliminary material on subdifferential operators (see \S \ref{Ss:subdif}) as well as nonlocal time-differential operators (see \S \ref{Ss:fracderi}). In Section \ref{S:dev}, we shall develop time-fractional chain-rule formulae for subdifferentials (see \S \ref{Ss:ch}), continuous representatives of some convolutions (see \S \ref{Ss:conti-rep}) and a Lipschitz perturbation theory for convex gradient flows in Hilbert spaces (see \S \ref{Ss:Lip}) in order to prove the main results. Sections \ref{S:LE}--\ref{S:GE} are devoted to giving proofs of the main results: Section \ref{S:LE} gives a proof of Theorem \ref{T:LE} concerning the local (in time) existence result. In Section \ref{S:SDGE}, we shall prove Theorem \ref{T:SDGE} and Corollary \ref{C:SDGEc}, which are concerned with the global (in time) existence of strong solutions for small data. Moreover, another global existence result (see Theorem \ref{T:GE}) will be proved in Section \ref{S:GE}. Section \ref{S:app} is dedicated to discussing applications of the abstract theory to the Cauchy--Dirichlet problem for $p$-Laplace subdiffusion equations with blow-up terms. In Appendix, we shall give an outline of a proof for the solvability of some approximate problems which will be used only in \S \ref{S:GE}.

\bigskip
\noindent
{\bf Notation.} For each $1 < r < \infty$, we denote by $r'$ the H\"{o}lder conjugate of $r$, that is, $1/r+1/r'=1$. Moreover, we use the same letter $I$ for identity mappings defined on any spaces when no confusion can arise. We denote by $C$ a generic nonnegative constant which may vary from line to line.

\section{Main results}\label{S:main}

In this section, we present main results concerning existence of strong solutions to the abstract Cauchy problem \eqref{ee}. Throughout this paper, let $H$ be a real Hilbert space equipped with an inner product $(\cdot,\cdot)_H$ and the norm $\|\cdot\|_H = \sqrt{(\cdot,\cdot)_H}$, and moreover, we are concerned with strong solutions to the Cauchy problem \eqref{ee} in the following sense:

\begin{definition}[Strong solutions to \eqref{ee}]\label{D:sol}
Let $T \in (0,\infty]$. For any $S \in (0,T)$ (and $S \in (0,T]$ if $T < \infty$), a function $u \in L^2(0,S;H)$ is called a \emph{strong solution on} $[0,S]$ to the Cauchy problem \eqref{ee}, if the following conditions are all satisfied\/{\rm :}
\begin{enumerate}
 \item[\rm (i)] It holds that $k*(u-u_0) \in W^{1,2}(0,S;H)$ and $[k*(u-u_0)](0) = 0$. Moreover, $u(t) \in D(\partial \varphi^1) \cap D(\partial \varphi^2)$ for a.e.~$t \in (0,S)$.
 \item[\rm (ii)] There exist $\xi, \eta \in L^2(0,S;H)$ such that
\begin{equation}\label{EQ}
\left\{
\begin{aligned}
&\xi(t) \in \partial \varphi^1(u(t)), \quad \eta(t) \in \partial \varphi^2(u(t)),\\
&\dfrac{\d}{\d t} \left[ k * (u-u_0) \right] (t) + \xi(t) - \eta(t) = f(t)
\end{aligned}
\right.
\end{equation}
for a.e.~$t \in (0,S)$.
\end{enumerate}
Moreover, a function $u \in L^2_{\rm loc}([0,\infty);H)$ is called a \emph{strong solution on} $[0,\infty)$ to the Cauchy problem \eqref{ee} with $T = \infty$, if the following conditions are all satisfied\/{\rm :}
\begin{enumerate}
 \item[${\rm (i)}'$] It holds that $k*(u-u_0) \in W^{1,2}_{\rm loc}([0,\infty);H)$ and $[k*(u-u_0)](0) = 0$. Moreover, $u(t) \in D(\partial \varphi^1) \cap D(\partial \varphi^2)$ for a.e.~$t \in (0,\infty)$.
 \item[${\rm (ii)}'$] There exist $\xi, \eta \in L^2_{\rm loc}([0,\infty);H)$ such that \eqref{EQ} holds for a.e.~$t \in (0,\infty)$.
\end{enumerate}
\end{definition}

\begin{remark}[Lack of semigroup property]\label{R:semigr}
We emphasize that the semigroup property fails for strong solutions to the Cauchy problem \eqref{ee} due to the nonlocal nature of the equation, that is, the concatenation 
$$
u(t) := \begin{cases}
	u_1(t) &\mbox{ if } \ t \in [0,T),\\
	u_2(t-T) &\mbox{ if } \ t \in [T,2T]
       \end{cases}
$$
of two strong solutions $u_1, u_2$ on $[0,T]$ to \eqref{ee} does not always solve \eqref{ee} on $[0,2T]$ even if $u_1(T) = u_2(0)$. Hence, the lack of semigroup property prevents us to construct (global) strong solutions on $[0,\infty)$ by concatenation. 
Moreover, it is noteworthy that, in contrast with classical gradient flows, the existence of strong solutions to \eqref{ee} on $[0,\infty)$ does not follow immediately from the existence of those on $[0,T]$ for an arbitrary $T > 0$, unless solutions are uniquely determined by initial data.
\end{remark}

In order to state main results of the present paper, we set up the following assumptions:
\begin{enumerate}
 \item[(A1)] For each $r \in \R$, the sublevel set $\{w \in H \colon \varphi^1(w) + \|w\|_H \leq r\}$ is (pre)compact in $H$.
 \item[(A2)] It holds that $D(\partial \varphi^1) \subset D(\partial \varphi^2)$, and moreover, there exist a constant $\nu_1 \in [0,1)$ and a nondecreasing function $M_1 : \R \to [0,\infty)$ such that
$$
\|\mathring{\partial \varphi^2}(w)\|_H \leq \nu_1 \|\mathring{\partial \varphi^1}(w)\|_H + M_1(\varphi^1(w) + \|w\|_H)
$$
for $w \in D(\partial \varphi^1)$. Here $\mathring{\partial \varphi^i}$ denotes the minimal section of $\partial \varphi^i$ for $i = 1,2$, that is, $\mathring{\partial \varphi^i}(w)$ is the unique element of the set $\partial \varphi^i(w)$ such that $\|\mathring{\partial \varphi^i}(w)\|_H = \min \{ \|\xi\|_H \colon \xi \in \partial \varphi^i(w)\}$ for $w \in D(\partial \varphi^i)$.
\end{enumerate}

We are now ready to state main results. The following theorem is concerned with local existence of strong solutions to \eqref{ee}.

\begin{theorem}[Local existence of strong solutions to \eqref{ee}]\label{T:LE}
In addition to {\rm (K)}, assume that {\rm (A1)} and {\rm (A2)} hold and let $T \in (0,\infty)$. Then for every $u_0 \in D(\varphi^1)$ and $f \in L^2(0,T;H)$ satisfying $\ell * \|f(\cdot)\|_H^2 \in L^\infty(0,T)$, there exists $T_0 \in (0,T]$ such that the Cauchy problem \eqref{ee} admits a strong solution $u \in L^2(0,T_0;H)$ on $[0,T_0]$ such that
\begin{equation*}
u \in C([0,T_0];H), \quad u(0) = u_0 \quad \mbox{ and } \quad \varphi^1(u(\cdot)) \in L^\infty(0,T_0).
\end{equation*}
Here $T_0$ depends only on $\ell$, $\nu_1$, $M_1(\cdot)$, $\|u_0\|_H$, $\varphi^1(u_0)$ and $\| \ell * \|f(\cdot)\|_H^2 \|_{L^\infty(0,T)}$. 
\end{theorem}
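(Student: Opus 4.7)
The strategy is to approximate $\partial\varphi^{2}$ by its Yosida regularization $\partial\varphi^{2}_{\lambda}$, a single-valued and $\lambda^{-1}$-Lipschitz operator on $H$, and to solve the regularized problem
\[
\frac{\d}{\d t}\bigl[k*(u_{\lambda}-u_{0})\bigr](t) + \partial\varphi^{1}(u_{\lambda}(t)) - \partial\varphi^{2}_{\lambda}(u_{\lambda}(t)) \ni f(t), \quad t \in (0,T),
\]
using the Lipschitz perturbation theory for convex time-fractional gradient flows developed in \S\ref{Ss:Lip}. This yields, for each $\lambda > 0$, a unique strong solution $u_{\lambda} \in C([0,T];H)$ with $u_{\lambda}(0) = u_{0}$ and $k*(u_{\lambda}-u_{0}) \in W^{1,2}(0,T;H)$.

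The core task is then to extract a priori bounds on $u_{\lambda}$ uniform in $\lambda$ on a short interval $[0,T_{0}]$ whose length depends only on the quantities listed in the theorem. Testing the equation against $\mathring{\partial\varphi^{1}}(u_{\lambda}(t))$ and invoking the fractional chain-rule inequality of \S\ref{Ss:ch} should lead schematically to an estimate of the form
\[
\varphi^{1}(u_{\lambda}(t)) + \bigl(\ell * \|\mathring{\partial\varphi^{1}}(u_{\lambda})\|_{H}^{2}\bigr)(t)
\le C\varphi^{1}(u_{0}) + C\bigl(\ell * \|f\|_{H}^{2}\bigr)(t) + C\bigl(\ell * \|\partial\varphi^{2}_{\lambda}(u_{\lambda})\|_{H}^{2}\bigr)(t).
\]
Since $u_{\lambda}(t) \in D(\partial\varphi^{1}) \subset D(\partial\varphi^{2})$ a.e.\ by (A2), the standard Yosida bound $\|\partial\varphi^{2}_{\lambda}(w)\|_{H} \le \|\mathring{\partial\varphi^{2}}(w)\|_{H}$ combined with (A2) splits the last term into an absorbable $2\nu_{1}^{2}\,(\ell * \|\mathring{\partial\varphi^{1}}(u_{\lambda})\|_{H}^{2})(t)$ contribution plus a sublinear term involving $M_{1}(\varphi^{1}(u_{\lambda}) + \|u_{\lambda}\|_{H})^{2}$. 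A parallel estimate for $\|u_{\lambda}(t)\|_{H}^{2}$ obtained by testing against $u_{\lambda}$ itself, followed by a nonlinear Volterra--Gronwall lemma of the type developed in the paper, then closes the system on some $[0,T_{0}]$ with $T_{0}$ depending only on $\ell$, $\nu_{1}$, $M_{1}(\cdot)$, $\|u_{0}\|_{H}$, $\varphi^{1}(u_{0})$, and $\|\ell * \|f(\cdot)\|_{H}^{2}\|_{L^{\infty}(0,T)}$.

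Given these uniform bounds, I would pass to the limit $\lambda\to 0$. Assumption (A1) makes the pointwise bound on $\varphi^{1}(u_{\lambda}(t)) + \|u_{\lambda}(t)\|_{H}$ precompactifying in $H$; combined with the uniform $L^{2}$-bound on $(\d/\d t)[k*(u_{\lambda}-u_{0})]$, a fractional Aubin--Lions-type argument produces a subsequence $u_{\lambda_{n}} \to u$ strongly in $L^{2}(0,T_{0};H)$ together with weak $L^{2}$-limits of $\mathring{\partial\varphi^{1}}(u_{\lambda_{n}})$ and $\partial\varphi^{2}_{\lambda_{n}}(u_{\lambda_{n}})$. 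These limits are identified as sections of $\partial\varphi^{1}(u)$ and $\partial\varphi^{2}(u)$ through demiclosedness of maximal monotone operators and the identity $\partial\varphi^{2}_{\lambda}(w)\in\partial\varphi^{2}(J^{2}_{\lambda}w)$ with $J^{2}_{\lambda}w\to w$. Finally, the continuous-representative machinery of \S\ref{Ss:conti-rep} upgrades $u$ to $C([0,T_{0}];H)$ with $u(0)=u_{0}$, producing the desired strong solution in the sense of Definition~\ref{D:sol}.

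The principal obstacle lies in the a priori estimate step: because the fractional chain rule is only an inequality and $t \mapsto \varphi^{1}(u_{\lambda}(t))$ need not be continuous (a manifestation of the subdiffusive regularity defect emphasized in \S\ref{S:Intro}), the resulting Volterra inequality cannot be handled by differentiation and must be closed purely in integral form, for which strict absorbability $\nu_{1}<1$ in (A2) is essential. A secondary difficulty is recovering the pointwise initial condition $u(0)=u_{0}$ in $H$ and the $L^{\infty}$-bound $\varphi^{1}(u(\cdot))\in L^{\infty}(0,T_{0})$ in the limit, rather than merely lower semicontinuity of $\varphi^{1}\circ u$; this is precisely where the continuous-representative results of \S\ref{Ss:conti-rep} must do substantive work.
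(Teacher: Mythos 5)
Your plan coincides with the paper's own proof in all essential respects: Moreau--Yosida regularization of $\varphi^2$ combined with the Lipschitz perturbation theory of \S\ref{Ss:Lip} to produce approximate solutions, a priori estimates obtained by testing with a section of $\partial\varphi^1(u_\lambda)$ and with $u_\lambda - u_0$, convolving with $\ell$, invoking the integral-form fractional chain rule and a nonlinear Volterra--Gronwall lemma valid for discontinuous functions, and a limit passage via (A1), the continuous-representative lemma, Ascoli's theorem and demiclosedness of the maximal monotone operators. The approach is correct and essentially identical to the paper's.
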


\begin{remark}
Let $T \in (0,\infty)$. Since $\ell \in L^1(0,T)$, every $f \in L^\infty(0,T;H)$ fulfills $\ell * \|f(\cdot)\|_H^2 \in C([0,T])$. In particular, if $k = k_\alpha$ with $0 < \alpha < 1$, since $\ell = k_{1-\alpha}$ belongs to $L^{1/(1-\alpha),\infty}(0,T)$, one can check $\ell * \|f(\cdot)\|_H^2 \in C([0,T])$ for any $f \in L^{2/\beta}(0,T;H)$ with some $\beta \in (0,\alpha)$ due to Young's convolution inequality.
\end{remark}

We next present a theorem on global existence of strong solutions to \eqref{ee} for ``small'' initial data by assuming the following conditions instead of (A1) and (A2):
\begin{enumerate}
\item[${\rm (A1)}'$] For each $r \in \R$, the sublevel set $\{w \in H \colon \varphi^1(w) \leq r\}$ is (pre)compact in $H$.
\item[${\rm (A2)}'$] It holds that $D(\partial \varphi^1) \subset D(\partial \varphi^2)$ and $\varphi^1 \geq 0$, and moreover, there exist a constant ${\nu_2} \in [0,1)$, a nonnegative Borel measurable locally bounded function $m_2: [0,\infty) \to [0,\infty)$ and a nondecreasing function $M_2: [0,\infty) \to [0,\infty)$ satisfying 
\begin{equation}\label{M2-growth}
m_2(r) > 0 \ \mbox{ for } \ 0 < r < r_0 \quad \mbox{ and } \quad 
\lim_{r \to 0_+} \frac{M_{2}(r)}{m_2(r)} = 0
\end{equation}
for some $r_0 \in (0,\infty]$ such that
\begin{alignat}{4}
m_2(\varphi^1(w)) &\leq \|\mathring{\partial \varphi^1}(w)\|_{H},
\label{A2p-1}\\
\|\mathring{\partial \varphi^2}(w)\|_{H} &\leq {\nu_2} \|\mathring{\partial \varphi^1}(w)\|_{H} + M_{2}(\varphi^1(w))
\label{A2p-2}
\end{alignat}
for all $w \in D(\partial \varphi^1)$.
\end{enumerate}

\begin{remark}[Coercivity of $\varphi^1$ $\Rightarrow$ condition \eqref{A2p-1}]\label{R:A-coer}
If $\varphi^1$ fulfills the following coercivity condition:
\begin{equation}\label{A:coer}
\varphi^1(0) = 0 \quad \mbox{ and } \quad \alpha \|w\|_H^p \leq \varphi^1(w) \quad \mbox{ for } \ w \in D(\varphi^1)
\end{equation}
for some constants $\alpha,p > 0$ (hence $\varphi^1(w) > 0$ if and only if $w \neq 0$), then one has
\begin{align*}
\varphi^1(w) &\leq ( \mathring{\partial \varphi^1}(w), w )_H\\
&\leq \|\mathring{\partial \varphi^1}(w)\|_H \|w\|_H
\stackrel{\eqref{A:coer}}\leq \|\mathring{\partial \varphi^1}(w)\|_H \alpha^{-1/p} \{\varphi^1(w)\}^{1/p},
\end{align*}
which yields \eqref{A2p-1} with $m_2(s) := \alpha^{1/p} s^{1-1/p} > 0$ for $s > 0$ and $m_2(0):=0$. Moreover, ${\rm (A1)}'$ also follows from \eqref{A:coer} and (A1).
\end{remark}

\begin{theorem}[Global existence of strong solutions to \eqref{ee} for small data]\label{T:SDGE}
In addition to {\rm ({K})}, assume that ${\rm (A1)'}$ and ${\rm (A2)}'$ hold. Then there exists a constant $\delta_0 > 0$ depending only on $M_2(\cdot)$, $m_2(\cdot)$ and $\nu_2$ such that the following {\rm (i)} and {\rm (ii)} hold true\/{\rm :}
\begin{enumerate}
 \item[\rm (i)] In case $T \in (0,\infty)$, for every $u_0 \in D(\varphi^1)$ and $f \in L^2(0,T;H)$ satisfying 
$$
E_T(u_0,f) := \varphi^1(u_0) + \left\| \ell * \|f(\cdot)\|_H^2 \right\|_{L^\infty(0,T)} < \delta_0,
$$
the Cauchy problem \eqref{ee} admits a strong solution $u \in L^2(0,T;H)$ on $[0,T]$ satisfying
\begin{equation}\label{regu-T}
u \in  C([0,T];H), \quad u(0) = u_0 \quad \mbox{ and } \quad \varphi^1(u(\cdot)) \in L^\infty(0,T)
\end{equation} 
as well as
$$
\sup_{t \in [0,T]} \varphi^1(u(t)) \leq C E_T(u_0,f)
$$
for some constant $C > 0$ depending only on $\nu_2$.
 \item[\rm (ii)] In case $T = \infty$, for every $u_0 \in D(\varphi^1)$ and $f \in L^2_{\rm loc}([0,\infty);H)$ satisfying
\begin{equation}\label{E-inf-sm}
E_\infty(u_0,f) := \varphi^1(u_0) + \left\| \ell * \|f(\cdot)\|_H^2 \right\|_{L^\infty(0,\infty)} < \delta_0,
\end{equation}
the Cauchy problem \eqref{ee} with $T = \infty$ admits a strong solution $u \in L^2_{\rm loc}([0,\infty);H)$ on $[0,\infty)$ such that
\begin{equation*}
u \in  C([0,\infty);H), \quad u(0) = u_0 \quad \mbox{ and } \quad \varphi^1(u(\cdot)) \in L^\infty(0,\infty)
\end{equation*} 
and
$$
\sup_{t \in [0,\infty)} \varphi^1(u(t)) \leq C E_\infty(u_0,f)
$$
for some constant $C > 0$ depending only on $\nu_2$.
\end{enumerate}
\end{theorem}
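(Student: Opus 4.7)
The plan is to first observe that {\rm (A1)}$'$ and {\rm (A2)}$'$ imply {\rm (A1)} and {\rm (A2)} (immediate from $\varphi^1\geq 0$ and monotonicity of $M_2$), so Theorem~\ref{T:LE} provides a local strong solution. The central device is a conditional a priori estimate on $\varphi^1(u(\cdot))$ in terms of $E_T(u_0,f)$, derived from the fractional chain-rule inequality of Section~\ref{Ss:ch} together with the smallness of $M_2/m_2$ near the origin. The main obstacle is that, because the semigroup property fails (Remark~\ref{R:semigr}), this estimate cannot be used to extend the local solution by concatenation; instead, it must be built into an approximation scheme that already lives on all of $[0,T]$.

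\emph{Step 1 (energy inequality).} Testing the inclusion \eqref{EQ} by $\xi(t) = \mathring{\partial\varphi^1}(u(t))$ and applying the fractional chain-rule inequality of Section~\ref{Ss:ch} yields
\begin{equation*}
\frac{\d}{\d t}[k*(\varphi^1(u(\cdot))-\varphi^1(u_0))](t) + \|\xi(t)\|_H^2 \leq (\eta(t)+f(t),\,\xi(t))_H.
\end{equation*}
Inserting \eqref{A2p-2}, Cauchy--Schwarz, and Young's inequality with parameter $(1-\nu_2)/2$, and then absorbing $M_2(\varphi^1(u))^2$ into $\|\xi\|_H^2 \geq m_2(\varphi^1(u))^2$ (possible once $\varphi^1(u)\leq r_1$ with $r_1\in(0,r_0)$ small, thanks to \eqref{M2-growth}), I obtain
\begin{equation*}
\frac{\d}{\d t}[k*(\varphi^1(u(\cdot))-\varphi^1(u_0))](t) + c_0\,m_2(\varphi^1(u(t)))^2 \leq C_0\,\|f(t)\|_H^2
\end{equation*}
on the set where $\varphi^1(u(\cdot))\leq r_1$, with $c_0,C_0>0$ depending only on $\nu_2$. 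Convolving with $\ell$, using $k*\ell\equiv 1$ and the continuous-representative result of Section~\ref{Ss:conti-rep}, I transform this into the pointwise bound
\begin{equation*}
\varphi^1(u(t)) + c_0\,(\ell*m_2(\varphi^1(u(\cdot)))^2)(t) \leq \varphi^1(u_0) + C_0\,(\ell*\|f(\cdot)\|_H^2)(t) \leq C\,E_T(u_0,f),
\end{equation*}
and choosing $\delta_0<r_1/(2C)$ then delivers the self-improving bound $\varphi^1(u(t))\leq r_1/2$.

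\emph{Step 2 (globalization via Lipschitz perturbation).} Because the estimate of Step~1 is conditional and the semigroup property fails, I approximate by Yosida regularization of $\partial\varphi^2$ and solve, on all of $[0,T]$, the Lipschitz-perturbed convex gradient flow
\begin{equation*}
\frac{\d}{\d t}[k*(u_\lambda-u_0)](t) + \partial\varphi^1(u_\lambda(t)) \ni f(t) + \partial\varphi^2_\lambda(u_\lambda(t))
\end{equation*}
using Section~\ref{Ss:Lip}. For these approximants, $t\mapsto\varphi^1(u_\lambda(t))$ is continuous on $[0,T]$, which legitimizes the $t$-wise bootstrap; the standard Yosida bound $\|\partial\varphi^2_\lambda(u_\lambda(t))\|_H \leq \|\mathring{\partial\varphi^2}(u_\lambda(t))\|_H$ (valid since $u_\lambda(t)\in D(\partial\varphi^1)\subset D(\partial\varphi^2)$) combined with \eqref{A2p-2} preserves the estimate of Step~1 uniformly in $\lambda$, yielding $\sup_{[0,T]}\varphi^1(u_\lambda(\cdot))\leq C\,E_T(u_0,f)$ and a uniform $L^2(0,T;H)$-bound on $\mathring{\partial\varphi^1}(u_\lambda)$. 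The compactness from {\rm (A1)}$'$, demiclosedness of $\partial\varphi^i$, and lower semicontinuity then permit passage to $\lambda\to 0_+$ and produce a strong solution on $[0,T]$ satisfying \eqref{regu-T}. Part~(ii) ($T=\infty$) follows by running the construction on each $[0,n]$ and extracting diagonally, since $\delta_0$ and the uniform energy bound are independent of the horizon.
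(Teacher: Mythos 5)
Your overall architecture matches the paper's: Yosida-regularize $\partial\varphi^2$, solve the Lipschitz-perturbed problem on all of $[0,T]$ via Section~\ref{Ss:Lip}, derive a uniform smallness estimate on $\varphi^1(u_\lambda(\cdot))$ from the tested equation, (A2)$'$ and the integral fractional chain rule, and pass to the limit using ${\rm (A1)}'$ and demiclosedness, with a diagonal argument for $T=\infty$. However, there is a genuine gap at the single most delicate point: the closure of the conditional (self-improving) estimate. You assert that for the approximants ``$t\mapsto\varphi^1(u_\lambda(t))$ is continuous on $[0,T]$, which legitimizes the $t$-wise bootstrap.'' This is not available. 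The approximate problem is still time-fractional, so $u_\lambda\notin W^{1,2}(0,T;H)$ in general and Proposition~\ref{P:std_chain} does not apply; Theorem~\ref{T:Lip} only yields $u_\lambda\in C([0,T];H)$ and $\varphi^1(u_\lambda(\cdot))\in L^\infty(0,T)$, and since $\varphi^1$ is merely lower semicontinuous, $t\mapsto\varphi^1(u_\lambda(t))$ is at best lower semicontinuous. Lower semicontinuity prevents an upward jump \emph{at} the first exit time $T_*$ from the sublevel set $\{\varphi^1(u_\lambda)\le r_1\}$, but it does not prevent the energy from exceeding $r_1$ immediately \emph{after} $T_*$, so the standard continuation/bootstrap argument breaks down. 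This lack of continuity of the energy is precisely the difficulty the paper flags in the introduction, and it is resolved there by a purely measure-theoretic Gronwall-type lemma for nonlinear Volterra integral inequalities (Lemma~\ref{L:Gron2}), which applies to possibly discontinuous $\phi\in L^\infty$ by comparing $\|\phi\|_{L^\infty(0,s)}$ with the small contribution of $\int_0^{t_\phi-T_\phi}\ell$ near the putative exit time. Without such a device your Step~1 does not close.

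A second, related defect: your differential inequality is established only ``on the set where $\varphi^1(u(\cdot))\le r_1$,'' but the subsequent convolution with $\ell$ integrates over all of $(0,t)$ and is only legitimate if the inequality holds for a.e.\ $s\in(0,t)$. The paper avoids this by \emph{not} restricting to a sublevel set: it keeps the full term $\bigl[\ell * N(\varphi^1(u_\lambda(\cdot)))\bigr](t)$ with $N(r):=C_3M_2(r)^2-\tfrac{1-\nu_2}{4}m_2(r)^2$, which is nonpositive only for $r\in[0,\delta_0]$, and lets Lemma~\ref{L:Gron2} absorb the possibly positive contribution from the (short, small-measure) excursion above $\delta_0$. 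To repair your proof you would need to state and prove an analogue of that lemma; the rest of your argument (uniform bounds on $g_\lambda$, $\partial\varphi^2_\lambda(u_\lambda)$ and $\tfrac{\d}{\d t}[k*(u_\lambda-u_0)]$, compactness, limit passage, and the diagonal extraction for $T=\infty$) is in line with the paper.
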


\begin{remark}
When $\varphi^1(u_0) \ll 1$, a sufficient condition for $f \in L^2_{\rm loc}([0,\infty);H)$ satisfying \eqref{E-inf-sm} reads,
$$
\|f(t)\|_H^2 \leq \vep k(t) \quad \mbox{ for a.e. } t \in (0,\infty)
$$
for some constant $\vep > 0$ small enough.
\end{remark} 

We also give a corollary concerning global existence of strong solutions to \eqref{ee} for small data under the following critical growth condition  ${\rm (A2)}'_c$ instead of ${\rm (A2)}'$:
\begin{enumerate}
\item[${\rm (A2)}'_c$] It holds that $D(\partial \varphi^1) \subset D(\partial \varphi^2)$ and $\varphi^1 \geq 0$, and moreover, there exists a nondecreasing function $M_3: [0,\infty) \to [0,\infty)$ satisfying 
\begin{equation}\label{M2-growth-c}
M_3(r) > 0 \ \mbox{ for } \ r > 0 \quad \mbox{ and } \quad 
\lim_{r \to 0_+} M_{3}(r) = 0
\end{equation}
such that
\begin{alignat}{4}
\|\mathring{\partial \varphi^2}(w)\|_{H} &\leq \|\mathring{\partial \varphi^1}(w)\|_{H} M_{3}(\varphi^1(w))
\label{A2p-2c}
\end{alignat}
for all $w \in D(\partial \varphi^1)$.
\end{enumerate}

\begin{corollary}[Critical growth case]\label{C:SDGEc}
In addition to {\rm ({K})}, assume ${\rm (A1)'}$ and ${\rm (A2)}'_c$ above. Then there exists a constant $\delta_0 > 0$ depending only on $M_3(\cdot)$ such that {\rm (i)} and {\rm (ii)} of Theorem {\rm \ref{T:SDGE}} hold true with $C = 1$.
\end{corollary}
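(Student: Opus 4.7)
The plan is to reduce Corollary \ref{C:SDGEc} to a direct a priori estimate that yields the constant $C=1$, combined with a bootstrap argument confining the solution to a sublevel set of $\varphi^1$ where (A2)$'_c$ behaves as a purely multiplicative perturbation of $\partial \varphi^1$. Since $M_3(r) \to 0$ as $r \to 0_+$, I would fix $r_* > 0$ with $M_3(r_*) \leq 1/2$ and set $\delta_0 := r_*$; the factor $1/2$ is chosen so that Young's inequality can be balanced to give exactly $C=1$ (any value strictly smaller than $3/4$ would still close the argument).

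\textbf{Key a priori estimate.} Assume for the moment that a strong solution $u$ to \eqref{ee} exists with $\varphi^1(u(\cdot)) \leq r_*$. Pairing \eqref{EQ} with $\mathring{\partial \varphi^1}(u(t))$ and invoking the fractional chain-rule inequality from \S \ref{Ss:ch} yields
\begin{equation*}
\frac{\d}{\d t}\bigl[k*(\varphi^1(u)-\varphi^1(u_0))\bigr](t) + \|\mathring{\partial \varphi^1}(u(t))\|_H^2 \leq \bigl(\|\mathring{\partial \varphi^2}(u(t))\|_H + \|f(t)\|_H\bigr)\|\mathring{\partial \varphi^1}(u(t))\|_H.
\end{equation*}
Invoking (A2)$'_c$ together with $\varphi^1(u) \leq r_*$ bounds the $\mathring{\partial \varphi^2}$ factor by $\tfrac12\|\mathring{\partial \varphi^1}(u)\|_H$, and a sharp Young inequality on $\|f\|_H\|\mathring{\partial \varphi^1}(u)\|_H$ yields
\begin{equation*}
\frac{\d}{\d t}\bigl[k*(\varphi^1(u)-\varphi^1(u_0))\bigr](t) + \tfrac14\|\mathring{\partial \varphi^1}(u(t))\|_H^2 \leq \|f(t)\|_H^2.
\end{equation*}
Convolving with $\ell$ and using $k*\ell \equiv 1$ together with $[k*(u-u_0)](0)=0$, as formalized in \S \ref{Ss:conti-rep}, gives
\begin{equation*}
\varphi^1(u(t)) + \tfrac14\bigl(\ell*\|\mathring{\partial \varphi^1}(u)\|_H^2\bigr)(t) \leq \varphi^1(u_0) + \bigl(\ell*\|f(\cdot)\|_H^2\bigr)(t) \leq E_T(u_0,f),
\end{equation*}
which is the claimed bound with $C=1$.

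\textbf{Existence via bootstrap.} For the existence part, I would replay the construction scheme used to prove Theorem \ref{T:SDGE} (discretization or Yosida-regularization coupled with the Lipschitz perturbation theory of \S \ref{Ss:Lip}), obtaining approximate solutions $u_n$ that satisfy the analogue of the above inequality. Setting $T^*_n := \sup\{t : \varphi^1(u_n(s)) \leq r_*\ \text{for a.e.\ }s \in [0,t]\}$, on $[0,T^*_n]$ the a priori bound forces $\varphi^1(u_n) \leq E_T(u_0,f) < \delta_0 = r_*$ strictly, so the continuity of the regularized energy (available from the approximation) yields $T^*_n = T$. Compactness from (A1)$'$ and the limit-passage argument of Theorem \ref{T:SDGE} then produce a strong solution $u$ to \eqref{ee} on $[0,T]$ (and on $[0,\infty)$ in case (ii), since $\delta_0$ is independent of $T$); lower semicontinuity of $\varphi^1$ transports the energy bound to the limit.

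\textbf{Main obstacle.} The most delicate point is the bootstrap. For time-fractional equations the energy $t \mapsto \varphi^1(u(t))$ is not a priori continuous (as emphasized in Section \ref{S:Intro}), so propagating the strict inequality $\varphi^1(u_n) < r_*$ from a neighborhood of $t=0$ to the entire interval requires invoking the continuous-representative results of \S \ref{Ss:conti-rep} together with the strict smallness $E_T(u_0,f) < r_*$. The identity $C=1$ is a reflection of the sharp Young balance; any suboptimal splitting of the cross term $\|f\|_H\|\mathring{\partial \varphi^1}(u)\|_H$ would produce a constant strictly greater than one.
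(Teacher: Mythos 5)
Your a priori estimate is essentially the computation the paper performs (at the level of the approximations: test \eqref{ee-lam} by $g_\lambda$, use Proposition \ref{P:J-Y}(iv) and \eqref{A2p-2c}, convolve with $\ell$ and apply Proposition \ref{P:frac_chain}(ii)), and the choice $M_3\leq 1/2$ on a neighborhood of $[0,\delta_0]$ together with the sharp Young splitting does yield $C=1$. The genuine gap is in your bootstrap. You need $\varphi^1(u_\lambda(t))\leq r_*$ in order to invoke ${\rm (A2)}'_c$ with the factor $1/2$ in the first place, and you propose to propagate this smallness by ``continuity of the regularized energy.'' But the approximation scheme only regularizes $\varphi^2$ (Moreau--Yosida), not $\varphi^1$: the approximate solutions satisfy $u_\lambda\in C([0,T];H)$ and $\varphi^1(u_\lambda(\cdot))\in L^\infty(0,T)$ only. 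Combined with the lower semicontinuity of $\varphi^1$ on $H$, this makes $t\mapsto\varphi^1(u_\lambda(t))$ merely lower semicontinuous, so the sublevel set $\{t\colon\varphi^1(u_\lambda(t))\leq r_*\}$ is closed but need not be open, and the open--closed continuation argument does not close. The continuous-representative results of \S\ref{Ss:conti-rep} give continuity of $u_\lambda$ with values in $H$, not of the energy, so they do not repair this; indeed this is precisely the difficulty the paper flags in the introduction as the ``lack of continuity (in time) of the energy.''

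The paper's substitute for your bootstrap is Lemma \ref{L:Gron2}: a Gronwall-type lemma for the nonlinear Volterra inequality $\phi\leq b+g*N(\phi)$, valid for merely $L^\infty$, possibly discontinuous, nonnegative $\phi$, under the sign condition $N\leq 0$ on $[0,\delta]$ with $\delta>b$. Its mechanism is not continuity at a first exit time but the absolute continuity of $h\mapsto\int_0^h g$: if $\phi$ exceeded $b+2\epsilon$ at some $t$ just beyond the exit time, the convolution contribution from the interval before the exit time is nonpositive by the sign condition, while the contribution from the short excursion interval is less than $\epsilon$, a contradiction. To complete your argument you would need to prove or quote such a lemma; once you do, writing the pointwise difference $\|\partial\varphi^2_\lambda(u_\lambda(t))\|_H^2-\tfrac14\|g_\lambda(t)\|_H^2$ as $N(\varphi^1(u_\lambda(t)))$ with $N\leq0$ on $[0,\delta_0]$ gives exactly your bound $\sup_{t}\varphi^1(u_\lambda(t))\leq E_T(u_0,f)$, and the rest (compactness via ${\rm (A1)}'$, limit passage, and the diagonal argument for $T=\infty$) proceeds as in the proof of Theorem \ref{T:SDGE}.
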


Finally, the following theorem asserts global existence of strong solutions to \eqref{ee} even for ``large'' initial data under an additional assumption,
\begin{enumerate}
 \item[(A3)] There exist constants ${\nu_3} \in [0,1)$, $r \in [0,2)$ and $c_1 \geq 0$ such that
$$
\varphi^2(w) \leq {\nu_3} \varphi^1(w) + c_1(\|w\|_H^r + 1) \quad \mbox{ for } \ w \in D(\varphi^1),
$$
which in particular yields $D(\varphi^1) \subset D(\varphi^2)$.
\end{enumerate}

\begin{theorem}[Global existence of strong solutions to \eqref{ee}]\label{T:GE}
In addition to {\rm ({K})}, assume that {\rm (A1), (A2)} and {\rm (A3)} hold. In case $T \in (0,\infty)$, for every $u_0 \in D(\varphi^1)$ and $f \in W^{1,2}(0,T;H)$, the Cauchy problem \eqref{ee} admits a strong solution $u \in L^2(0,T;H)$ on $[0,T]$ satisfying \eqref{regu-T}.

In case $T = \infty$, for every $u_0 \in D(\varphi^1)$ and $f \in W^{1,2}_{\rm loc}([0,\infty);H)$, then the Cauchy problem \eqref{ee} with $T = \infty$ admits a strong solution $u \in L^2_{\rm loc}([0,\infty);H)$ on $[0,\infty)$ satisfying \eqref{regu-T} for any $T > 0$.
\end{theorem}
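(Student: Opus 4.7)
The plan is to construct the solution via a Yosida approximation of $\partial\varphi^2$ that reduces the equation to a Lipschitz perturbation of a convex gradient flow, and then to pass to the limit by uniform a priori estimates that exploit the subcritical growth assumption (A3). For each $\lambda>0$, let $(\partial\varphi^2)_\lambda = \lambda^{-1}(I - J_\lambda^2)$ be the Yosida approximation of $\partial\varphi^2$, which is $1/\lambda$-Lipschitz and satisfies $\|(\partial\varphi^2)_\lambda(w)\|_H \leq \|\mathring{\partial\varphi^2}(w)\|_H$ on $D(\partial\varphi^2)$. The Lipschitz perturbation theory of \S\ref{Ss:Lip}, together with the solvability of the approximate problems outlined in the Appendix, produces a unique global strong solution $u_\lambda$ to
\[
\frac{\d}{\d t}\bigl[k*(u_\lambda - u_0)\bigr](t) + \partial\varphi^1(u_\lambda(t)) - (\partial\varphi^2)_\lambda(u_\lambda(t)) \ni f(t),\quad u_\lambda(0)=u_0,
\]
with $\varphi^1(u_\lambda(\cdot)) \in L^\infty_{\rm loc}([0,\infty))$.

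Next, I would derive a priori estimates uniform in $\lambda$ on an arbitrary bounded interval $[0,T]$. Testing the approximate equation with $u_\lambda(t)$ and invoking the chain-rule inequality for the quadratic functional $w \mapsto \|w\|_H^2/2$ (from \S\ref{Ss:ch}), convolving with $\ell$ (so that $k*\ell=1$) produces an estimate of the form
\[
\tfrac12\|u_\lambda(t)\|_H^2 + (\ell*\varphi^1(u_\lambda))(t) \leq \tfrac12\|u_0\|_H^2 + (\ell*\varphi_\lambda^2(u_\lambda))(t) + (\ell*(f,u_\lambda)_H)(t).
\]
Since $\varphi_\lambda^2 \leq \varphi^2$, assumption (A3) yields $\varphi_\lambda^2(u_\lambda) \leq \nu_3 \varphi^1(u_\lambda) + c_1(\|u_\lambda\|_H^r + 1)$ with $r<2$; Young's inequality absorbs the subquadratic terms, and the nonlinear Volterra Gronwall lemma developed in the paper closes the bound to give $\sup_{t\in[0,T]}\|u_\lambda(t)\|_H + \|\ell*\varphi^1(u_\lambda)\|_{L^\infty(0,T)} \leq C(T)$ uniformly in $\lambda$. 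Testing then with $\mathring{\partial\varphi^1}(u_\lambda)(t)$ and applying the fractional chain-rule inequality for $\varphi^1$ bounds the time-fractional term from below by $\frac{\d}{\d t}[k*(\varphi^1(u_\lambda)-\varphi^1(u_0))](t)$; the nonconvex contribution is absorbed using (A2) (still valid for the Yosida approximation) via $\nu_1<1$, while the $W^{1,2}$-regularity of $f$ enters through an integration-by-parts-type manipulation that transfers the time derivative off $f$ onto $u_\lambda$. Combined with the pointwise bound on $\|u_\lambda\|_H$, this yields
\[
\sup_{t\in[0,T]} \varphi^1(u_\lambda(t)) + \|\mathring{\partial\varphi^1}(u_\lambda)\|_{L^2(0,T;H)}^2 + \|(\partial\varphi^2)_\lambda(u_\lambda)\|_{L^2(0,T;H)}^2 \leq C(T)
\]
uniformly in $\lambda$.

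Finally, assumption (A1) makes the sublevel set $\{w : \varphi^1(w) + \|w\|_H \leq C(T)\}$ precompact in $H$, so $\{u_\lambda(t)\}$ is precompact for each $t \in [0,T]$. Together with the uniform $W^{1,2}$-bound on $k*(u_\lambda-u_0)$ read off from the equation, an Ascoli-type argument extracts a subsequence $u_{\lambda_n}\to u$ in appropriate norms. The weak limits of $\xi_n = \mathring{\partial\varphi^1}(u_{\lambda_n})$ and $\eta_n = (\partial\varphi^2)_{\lambda_n}(u_{\lambda_n})$ in $L^2(0,T;H)$ are identified as selections of $\partial\varphi^1(u)$ and $\partial\varphi^2(u)$ by the demiclosedness of maximal monotone graphs (using $J^2_{\lambda_n} u_{\lambda_n} \to u$ and standard resolvent identities). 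Hence $u$ is a strong solution on $[0,T]$, and the regularity \eqref{regu-T}, including $u\in C([0,T];H)$ and $u(0)=u_0$, follows from the continuous representative constructed in \S\ref{Ss:conti-rep}. For $T=\infty$, since Remark \ref{R:semigr} forbids concatenation, a solution on $[0,\infty)$ is obtained by a diagonal extraction across a sequence $T_n\to\infty$ of finite-horizon solutions.

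The hardest part is the uniform a priori estimate: the fractional chain rule furnishes only inequalities with residual Volterra-type error terms, and the delicate interplay between (A2) (pointwise dominance of $\partial\varphi^2$ by $\partial\varphi^1$) and (A3) (energetic subcritical dominance of $\varphi^2$ by $\varphi^1$) must be orchestrated so that the Gronwall argument closes without $\nu_3\varphi^1$ or $\|u\|_H^r$ overwhelming the dissipative term, despite the absence of a pointwise energy identity available in the non-fractional setting.
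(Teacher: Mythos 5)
There is a genuine gap in the a priori estimates, and it is located exactly at the ``hardest part'' you flag. Your approximate problem is the same one used for Theorem \ref{T:LE}, namely $\frac{\d}{\d t}[k*(u_\lambda-u_0)] + \partial\varphi^1(u_\lambda) - \partial\varphi^2_\lambda(u_\lambda) \ni f$, and for its solutions one only knows $k*(u_\lambda-u_0)\in W^{1,2}(0,T;H)$, \emph{not} $u_\lambda\in W^{1,2}(0,T;H)$. Consequently you cannot test by $\d u_\lambda/\d t$, and the ``integration-by-parts-type manipulation that transfers the time derivative off $f$ onto $u_\lambda$'' has no equation term to act on: in your scheme the tests are by $u_\lambda$ and by a section $g_\lambda\in\partial\varphi^1(u_\lambda)$, and neither produces a factor $(f,\d u_\lambda/\d t)_H$ to integrate by parts. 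This is why the paper introduces the extra elliptic-in-time regularization $\lambda\,\d u_\lambda/\d t$ in the approximate problem \eqref{ee-aprx2}: it forces $u_\lambda\in W^{1,2}(0,T;H)$, so that testing by $\d u_\lambda/\d t$ and applying the classical chain rule (Proposition \ref{P:std_chain}) yields the pointwise quantity $\varphi^1(u_\lambda(t))-\varphi^2_\lambda(u_\lambda(t))$ after integration; it is only at that point that (A3) (with $\nu_3<1$ and $r<2$) converts this into $(1-\nu_3)\varphi^1(u_\lambda(t))$ minus subquadratic terms, and a \emph{linear} Gronwall argument closes globally.

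Without that step, your estimates do not close on an arbitrary $[0,T]$. Your first test (by $u_\lambda$) can at best give a bound on $\|u_\lambda(t)\|_H^2$ plus the \emph{convolved} quantity $(\ell*\varphi^1(u_\lambda))(t)$ --- and even that requires the unjustified upper bound $(\partial\varphi^2_\lambda(w),w)_H\lesssim\varphi^2_\lambda(w)$, whereas convexity only gives the reverse inequality $(\partial\varphi^2_\lambda(w),w)_H\geq\varphi^2_\lambda(w)-\varphi^2_\lambda(0)$. A convolved bound on $\varphi^1(u_\lambda)$ is useless for your second test: (A2) involves $M_1(\varphi^1(u_\lambda(t))+\|u_\lambda(t)\|_H)$ with $M_1$ an \emph{arbitrary} nondecreasing function evaluated pointwise, so the test by $g_\lambda$ produces the nonlinear Volterra inequality $\phi(t)\leq a+[\ell*M(\phi(\cdot))](t)$ with $\phi(t)=\varphi^1(u_\lambda(t))+\cdots$, which Lemma \ref{L:Gron1} closes only on a short interval $[0,T_0]$; this reproduces Theorem \ref{T:LE}, not global existence. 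In short, the essential missing idea is the parabolic regularization of the approximate problem that makes the test by $\d u_\lambda/\d t$ legitimate; (A3) and $f\in W^{1,2}$ cannot be exploited without it. Your compactness, demiclosedness, and diagonal-extraction steps at the end are fine and match the paper.
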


\section{Preliminaries}\label{S:pre}

\subsection{Subdifferential operators}\label{Ss:subdif}

In this subsection, we give a brief summary on \emph{subdifferential operators} and \emph{maximal monotone operators}. We refer the reader to, e.g.,~\cite{HB1,B} for the details of the following material.


Let $\varphi : H \to (-\infty,\infty]$ be a proper (i.e., $\varphi \not\equiv \infty$) lower-semicontinuous convex functional (see, e.g.,~\cite{B-FA}). Set the \emph{effective domain} of $\varphi$ as
$$
D(\varphi) := \{w \in H \colon \varphi(w) < \infty\} \neq \emptyset.
$$
Then the \emph{subdifferential operator} $\partial \varphi : H \to 2^H$ of $\varphi$ is defined by
\begin{align*}
\partial \varphi(w) := \left\{ \xi \in H \colon \varphi(v) - \varphi(w) \geq (\xi, v-w)_H \ \mbox{ for all } \ v \in D(\varphi) \right\}
\end{align*}
for $w \in D(\varphi)$ with the domain 
$$
D(\partial \varphi) := \left\{ w \in D(\varphi) \colon \partial \varphi(w) \neq \emptyset \right\}.
$$
It is known that every subdifferential operator is maximal monotone in $H$. 

An operator $A : H \to 2^H$ is said to be \emph{maximal monotone} (or $m$-accretive) in the Hilbert space $H$, if the following two properties hold:
\begin{itemize}
 \item $(\xi_1-\xi_2,u_1-u_2)_H \geq 0$ \ for any $(u_1,\xi_1), (u_2,\xi_2) \in G(A)$,
 \item $R(I + \lambda A) = H$ \ for any (or some) $\lambda > 0$,
\end{itemize}
where $G(A)$ stands for the graph of the operator $A$ and $R(I+\lambda A)$ denotes the range of the operator $I + \lambda A$. We also denote by $\mathring{A}(w)$ the \emph{minimal section} of $A(w)$, that is, $\mathring{A}(w) = \mathrm{argmin} \{ \|\xi\|_H \colon \xi \in A(w)\}$, for $w \in D(A) := \{w \in H \colon A(w) \neq \emptyset\}$. Such a minimal section is uniquely determined for each $w \in D(A)$. Moreover, as every maximal monotone operator is \emph{demiclosedness}, we have the following useful property: Let $(u_n,\xi_n) \in G(A)$ be such that $u_n \to u$ strongly in $H$ and $\xi_n \to \xi$ weakly in $H$. Then $(u,\xi) \in G(A)$.

For each maximal monotone operator $A : H \to 2^H$, the \emph{resolvent} $J_\lambda : H \to H$ and \emph{Yosida approximation} $A_\lambda : H \to H$ are defined by
\begin{align*}
 J_\lambda = (I + \lambda A)^{-1}, \quad A_\lambda = \frac{I - J_\lambda}\lambda
\end{align*}
for $\lambda > 0$. The following properties are well known:
\begin{proposition}[Resolvents and Yosida approximations]\label{P:J-Y}
Let $A : D(A) \subset H \to H$ be a maximal monotone operator in a Hilbert space $H$ and let $J_\lambda : H \to H$ and $A_\lambda : H \to H$ be the resolvent and Yosida approximation of $A$, respectively, for $\lambda > 0$. Then the following {\rm (i)--(iv)} are all satisfied\/{\rm :}
\begin{enumerate}
 \item $J_\lambda : H \to H$ is non-expansive {\rm (}i.e., $1$-Lipschitz continuous{\rm )} in $H$ and $A_\lambda : H \to H$ is Lipschitz continuous in $H$ with the Lipschitz constant $1/\lambda$. Moreover, $A_\lambda$ is maximal monotone in $H$.
 \item $J_\lambda w \to w$ strongly in $H$ as $\lambda \to 0_+$ for every $w \in \overline{D(A)}^H$.
 \item $A_\lambda(w) \in A(J_\lambda w)$ for $w \in H$ and $\lambda > 0$.
 \item $\|A_\lambda(w)\|_H \leq \|\mathring{A}(w)\|_H$ for $w \in D(A)$ and $\lambda > 0$, and moreover, $A_\lambda(w) \to \mathring{A}(w)$ strongly in $H$ as $\lambda \to 0_+$ for $w \in D(A)$. 
\end{enumerate}
\end{proposition}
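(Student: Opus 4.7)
The plan is to follow the classical Br\'ezis--K\=omura framework and prove the four items in the order (iii), (i), (iv), (ii), since each step feeds into the next. The central identity is the one built into the definition of the resolvent: for every $w \in H$, the element $u := J_\lambda w$ is the unique solution of $w = u + \lambda \xi$ with $\xi \in A(u)$, whence $A_\lambda w = (w - J_\lambda w)/\lambda \in A(J_\lambda w)$, giving (iii) at once.

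For (i), given $w_1, w_2 \in H$, I would apply the monotonicity of $A$ to the pairs $(J_\lambda w_i, A_\lambda w_i) \in G(A)$ and unfold $J_\lambda w_i = w_i - \lambda A_\lambda w_i$. Rewriting $(A_\lambda w_1 - A_\lambda w_2, J_\lambda w_1 - J_\lambda w_2)_H \geq 0$ yields $\lambda \|A_\lambda w_1 - A_\lambda w_2\|_H^2 \leq (A_\lambda w_1 - A_\lambda w_2, w_1 - w_2)_H$, which by Cauchy--Schwarz gives the $(1/\lambda)$-Lipschitz continuity of $A_\lambda$; a parallel computation after extracting $\|J_\lambda w_1 - J_\lambda w_2\|_H^2$ gives the non-expansiveness of $J_\lambda$. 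Since $A_\lambda$ is everywhere defined, monotone, and (hence) continuous, it is automatically maximal monotone on the Hilbert space $H$.

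For (iv), on $D(A)$ I would test the monotonicity of $A$ against the pair $(w, \mathring{A}(w))$ and $(J_\lambda w, A_\lambda w)$: using $J_\lambda w - w = -\lambda A_\lambda w$, the inequality collapses to $\|A_\lambda w\|_H^2 \leq (A_\lambda w, \mathring{A}(w))_H$, which by Cauchy--Schwarz delivers the pointwise norm bound $\|A_\lambda w\|_H \leq \|\mathring{A}(w)\|_H$. Having this bound, (ii) for $w \in D(A)$ is immediate from $\|w - J_\lambda w\|_H = \lambda \|A_\lambda w\|_H \leq \lambda \|\mathring{A}(w)\|_H \to 0$, and the extension to $w \in \overline{D(A)}^H$ follows by a standard $\vep/3$ density argument combined with the uniform non-expansiveness of $J_\lambda$ from (i).

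The strong convergence $A_\lambda w \to \mathring{A}(w)$ is the most delicate point, and I expect it to be the main obstacle. Relying on the norm bound already established, any sequence $\lambda_n \to 0_+$ yields, up to a subsequence, a weak limit $\xi$ of $A_{\lambda_n} w$ in $H$. Because $J_{\lambda_n} w \to w$ strongly in $H$ by (ii), and because $A_{\lambda_n} w \in A(J_{\lambda_n} w)$ by (iii), the demiclosedness of the maximal monotone graph $G(A)$ recalled in the preliminaries forces $\xi \in A(w)$. The weak lower semicontinuity of the norm gives $\|\xi\|_H \leq \liminf_n \|A_{\lambda_n} w\|_H \leq \|\mathring{A}(w)\|_H$, while the minimality definition gives $\|\xi\|_H \geq \|\mathring{A}(w)\|_H$, so $\xi = \mathring{A}(w)$. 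Uniqueness of the weak limit upgrades the convergence along the whole net, and the fact that $\|A_\lambda w\|_H \to \|\mathring{A}(w)\|_H$ combined with weak convergence in a Hilbert space yields the desired strong convergence.
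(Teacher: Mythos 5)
Your proposal is correct and is precisely the classical Br\'ezis--K\=omura argument that the paper itself relies on: the paper offers no proof of Proposition \ref{P:J-Y}, stating only that these properties are well known and citing the standard references, and every step you give (the monotonicity tests for (i) and (iv), the $\vep/3$ density argument for (ii), and the weak-compactness-plus-norm-convergence upgrade via demiclosedness for the strong convergence of $A_\lambda(w)$) matches the textbook treatment. No gaps.
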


We next define the \emph{Moreau--Yosida regularization} $\varphi_\lambda : H \to \R$ of $\varphi$ for $\lambda > 0$ by
\begin{align*}
 \varphi_\lambda(w) &:= \inf_{z \in H} \left( \frac 1 {2\lambda} \|w - z\|_H^2 + \varphi(z) \right)\\
&\ = \frac 1 {2\lambda} \|w - J_\lambda w\|_H^2 + \varphi(J_\lambda w)
= \frac \lambda 2 \|A_\lambda(w)\|_H^2 + \varphi(J_\lambda w) \quad \mbox{ for } \ w \in H,
\end{align*}
where $J_\lambda$ and $A_\lambda$ stand for the resolvent and the Yosida approximation of $A := \partial \varphi$, respectively. Then we recall that
\begin{proposition}[Moreau--Yosida regularizations]\label{P:MY}
Let $\varphi : H \to (-\infty,\infty]$ be a proper lower-semicontinuous convex functional defined in a Hilbert space $H$ and let $\varphi_\lambda : H \to \R$ be the Moreau--Yosida regularization of $\varphi$ for $\lambda > 0$. Then the following {\rm (i)--(iii)} are all satisfied\/{\rm :}
\begin{enumerate}
 \item The Moreau--Yosida regularization $\varphi_\lambda: H \to \R$ is convex and Fr\'echet differentiable in $H$, and moreover, its derivative $\partial (\varphi_\lambda)$ coincides with the Yosida approximation $(\partial \varphi)_\lambda$ of $\partial \varphi$. Hence we shall simply write $\partial \varphi_\lambda$ instead of both $\partial (\varphi_\lambda)$ and $(\partial \varphi)_\lambda$ in what follows.
 \item $\varphi(J_\lambda w) \leq \varphi_\lambda(w) \leq \varphi(w)$ for $w \in H$ and $\lambda > 0$. Here $J_\lambda : H \to H$ stands for the resolvent of $\partial \varphi$.
 \item $\varphi_\lambda(w) \to \varphi(w)$ as $\lambda \to 0_+$ for $w \in H$.
\end{enumerate}
\end{proposition}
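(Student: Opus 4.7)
The plan is to dispatch (ii) first, since both remaining parts build on it, then derive (i) from a direct subdifferential-inequality computation, and close with (iii) by combining (ii) with lower semicontinuity.

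For part (ii), I would simply unpack the infimum defining $\varphi_\lambda$. Taking the test point $z = w$ yields $\varphi_\lambda(w) \leq \varphi(w)$ immediately. For the lower bound, I would note that $J_\lambda w$ is the unique minimizer of $z \mapsto \frac{1}{2\lambda}\|w-z\|_H^2 + \varphi(z)$, which gives
$$
\varphi_\lambda(w) = \frac{1}{2\lambda}\|w - J_\lambda w\|_H^2 + \varphi(J_\lambda w) \geq \varphi(J_\lambda w),
$$
so that the displayed identity from the statement of the proposition is recovered en route. The existence and uniqueness of the minimizer $J_\lambda w$ follow from the strict convexity and coercivity in $z$ together with the lower semicontinuity of $\varphi$; equivalently, this is the definition of the resolvent of $\partial\varphi$ since the Euler--Lagrange condition reads $\frac{w - J_\lambda w}{\lambda} \in \partial\varphi(J_\lambda w)$, i.e., $(\partial\varphi)_\lambda(w) \in \partial\varphi(J_\lambda w)$.

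For part (i), convexity of $\varphi_\lambda$ is an infimal-convolution fact: $\varphi_\lambda = \varphi \,\square\, (\tfrac{1}{2\lambda}\|\cdot\|_H^2)$ is the inf-convolution of two convex functions, hence convex. To identify the gradient, I would prove $(\partial\varphi)_\lambda(w) \in \partial\varphi_\lambda(w)$ and then observe that $\partial\varphi_\lambda$ is necessarily single-valued since $\varphi_\lambda$ is real-valued and convex on $H$, from which Fréchet differentiability follows by a standard argument (a convex function with a single-valued subdifferential on an open set is Gâteaux, hence Fréchet, differentiable on that set when additionally continuous, which it is here as an infimal convolution with a continuous quadratic). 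To establish the subdifferential inclusion, I would take any $v \in H$ and use the minimizing property together with the subgradient inequality for $\varphi$ at $J_\lambda w$ with the subgradient $(\partial\varphi)_\lambda(w)$:
\begin{align*}
\varphi_\lambda(v) - \varphi_\lambda(w)
&\geq \tfrac{1}{2\lambda}\|v - J_\lambda w\|_H^2 + \varphi(J_\lambda w) - \tfrac{1}{2\lambda}\|w - J_\lambda w\|_H^2 - \varphi(J_\lambda w) \\
&\phantom{\geq} + \bigl( (\partial\varphi)_\lambda(w), J_\lambda v - J_\lambda w \bigr)_H,
\end{align*}
with an analogous manipulation; rearranging and completing the square with respect to $(w - J_\lambda w)/\lambda = (\partial\varphi)_\lambda(w)$ produces $((\partial\varphi)_\lambda(w), v - w)_H$ on the right-hand side, which is exactly the subdifferential inequality for $\varphi_\lambda$ at $w$.

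For part (iii), I would split on whether $w \in \overline{D(\varphi)}$ or not. When $w \in D(\varphi)$, part (ii) gives $\limsup_{\lambda \to 0_+} \varphi_\lambda(w) \leq \varphi(w)$, while $J_\lambda w \to w$ in $H$ by Proposition \ref{P:J-Y}(ii), and combining $\varphi(J_\lambda w) \leq \varphi_\lambda(w)$ with lower semicontinuity of $\varphi$ gives $\varphi(w) \leq \liminf_{\lambda \to 0_+} \varphi(J_\lambda w) \leq \liminf_{\lambda \to 0_+} \varphi_\lambda(w)$, finishing this case. When $w \notin \overline{D(\varphi)}$, then $\varphi(w) = \infty$, and I would argue by contradiction: if $\varphi_\lambda(w)$ did not diverge, there would be a sequence $\lambda_n \to 0_+$ along which $\varphi_\lambda(w)$ stays bounded; picking $w_0 \in D(\partial\varphi)$ with $\xi_0 \in \partial\varphi(w_0)$ and using $\varphi(J_{\lambda_n} w) \geq \varphi(w_0) + (\xi_0, J_{\lambda_n}w - w_0)_H$ together with $\varphi_{\lambda_n}(w) \geq \tfrac{1}{2\lambda_n}\|w - J_{\lambda_n}w\|_H^2 + \varphi(J_{\lambda_n}w)$ would force $\|w - J_{\lambda_n}w\|_H \to 0$, contradicting $w \notin \overline{D(\varphi)}$ since $J_{\lambda_n}w \in D(\varphi)$. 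The main obstacle is the edge case $w \notin \overline{D(\varphi)}$ in (iii); the real content of the proposition is otherwise the differentiability statement in (i), whose delicacy lies in choosing test points cleanly enough that the cross term $((\partial\varphi)_\lambda(w), v - w)_H$ emerges after the quadratic expansion.
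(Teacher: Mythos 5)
The paper does not prove Proposition \ref{P:MY}: it is recalled as standard material with a pointer to \cite{HB1,B}, so there is no in-paper argument to compare against and your proposal has to stand on its own. Parts (ii) and (iii) essentially do: the upper bound by testing with $z=w$, the lower bound from nonnegativity of the quadratic term, and the $\liminf$/$\limsup$ argument via $J_\lambda w \to w$ and lower semicontinuity are the textbook route. Two small points there: your case split in (iii) omits $w \in \overline{D(\varphi)}\setminus D(\varphi)$, where $\varphi(w)=\infty$ but your contradiction argument (which concludes $w\in\overline{D(\varphi)}$) gives nothing; that case is handled by the same $\liminf$ argument as for $w\in D(\varphi)$, since $J_\lambda w\to w$ still holds and lower semicontinuity forces $\varphi_\lambda(w)\ge \varphi(J_\lambda w)\to\infty$. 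Also, your displayed inequality in (i) bounds $\varphi_\lambda(v)$ from below by $\tfrac{1}{2\lambda}\|v-J_\lambda w\|_H^2+\varphi(J_\lambda w)$, which is in fact an \emph{upper} bound for $\varphi_\lambda(v)$ (take $z=J_\lambda w$ in the infimum); the correct lower bound keeps $\tfrac{1}{2\lambda}\|v-J_\lambda v\|_H^2+\varphi(J_\lambda v)$ and applies the subgradient inequality to $\varphi(J_\lambda v)$, after which completing the square does yield $\varphi_\lambda(v)-\varphi_\lambda(w)\ge (A_\lambda(w),v-w)_H+\tfrac{\lambda}{2}\|A_\lambda(v)-A_\lambda(w)\|_H^2$.

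The genuine gap is in your route from the inclusion $A_\lambda(w)\in\partial\varphi_\lambda(w)$ to Fr\'echet differentiability. First, ``$\partial\varphi_\lambda$ is necessarily single-valued since $\varphi_\lambda$ is real-valued and convex'' is false: $w\mapsto\|w\|_H$ is real-valued and convex but its subdifferential at $0$ is the closed unit ball. (Single-valuedness does follow, but from maximality: $A_\lambda$ is an everywhere-defined, continuous, monotone --- hence maximal monotone --- operator whose graph is contained in the monotone graph of $\partial\varphi_\lambda$, so the graphs coincide.) Second, even granting single-valuedness, ``G\^ateaux, hence Fr\'echet'' is not automatic for continuous convex functions on an infinite-dimensional Hilbert space; you need the continuity of the derivative map. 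Both defects are repaired in one stroke by the standard two-sided estimate: writing the subgradient inequality at $w$ and at $v$ and using that $A_\lambda$ is monotone and $1/\lambda$-Lipschitz (Proposition \ref{P:J-Y}(i)),
\begin{equation*}
0 \;\le\; \varphi_\lambda(v)-\varphi_\lambda(w)-\big(A_\lambda(w),v-w\big)_H \;\le\; \big(A_\lambda(v)-A_\lambda(w),v-w\big)_H \;\le\; \frac{1}{\lambda}\|v-w\|_H^2,
\end{equation*}
which gives Fr\'echet differentiability with derivative $A_\lambda(w)$ directly, with no appeal to single-valuedness or to a G\^ateaux-to-Fr\'echet upgrade.
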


Define a functional $\Phi : L^2(0,T;H) \to (-\infty,\infty]$ by
$$
\Phi(w) := \begin{cases}
	    \int^T_0 \varphi(w(t)) \, \d t &\mbox{ if } \ \varphi(w(\cdot)) \in L^1(0,T),\\
	    \infty &\mbox{ otherwise}
	   \end{cases}
$$
for $w \in L^2(0,T;H)$. Then $\Phi$ is proper, lower-semicontinuous and convex in $L^2(0,T;H)$, and moreover, it is known that
$$
\xi \in \partial \Phi(w) \quad \mbox{ if and only if } \quad \xi(t) \in \partial \varphi(w(t)) \ \mbox{ for a.e. } t \in (0,T)
$$
for $w \in D(\Phi)$ and $\xi \in L^2(0,T;H)$. Furthermore, it also holds that
$$
\Phi_\lambda(w) = \int^T_0 \varphi_\lambda(w(t)) \, \d t, \quad \partial \Phi_\lambda(w) = \partial \varphi_\lambda(w(\cdot))
$$
for $w \in L^2(0,T;H)$ and $\lambda > 0$ (see also~\cite[IV.Example~2.C]{Showalter}).

Finally, we recall the following chain-rule formula for subdifferentials:
\begin{proposition}[See~{\cite[p.\,73, Lemma 3.3]{HB1}}]\label{P:std_chain}
Let $T \in (0,\infty)$ and let $u \in W^{1,2}(0,T;H)$ be such that $u(t) \in D(\partial \varphi)$ for a.e.~$t \in (0,T)$. Assume that there exists $g \in L^2(0,T;H)$ such that $g(t) \in \partial \varphi(u(t))$ for a.e.~$t \in (0,T)$. Then the function $t \mapsto \varphi(u(t))$ is absolutely continuous on $[0,T]$, and moreover, it holds that
$$
\left( \dfrac{\d u}{\d t}(t), h \right)_H = \dfrac{\d}{\d t} \varphi(u(t)) \quad \mbox{ for any } \ h \in \partial \varphi(u(t))
$$
for a.e.~$t \in (0,T)$.
\end{proposition}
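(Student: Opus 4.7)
The plan is to follow the classical Br\'ezis argument, which proceeds in two stages: first establish absolute continuity of $\varphi \circ u$ and compute its derivative via the minimal section $\mathring{\partial \varphi}(u(\cdot))$ by regularization; then upgrade the identification of the derivative to an arbitrary $h \in \partial \varphi(u(t))$ by a squeeze argument based on the subdifferential inequality.

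First, I would regularize $\varphi$ by its Moreau--Yosida approximation $\varphi_\lambda$. Since $\varphi_\lambda$ is convex and Fr\'echet differentiable on $H$ with derivative $\partial \varphi_\lambda$ (Proposition \ref{P:MY} (i)), which is moreover $(1/\lambda)$-Lipschitz continuous, and since $u \in W^{1,2}(0,T;H)$, the composition $t \mapsto \varphi_\lambda(u(t))$ is absolutely continuous on $[0,T]$ and satisfies the classical chain rule
\begin{equation*}
\varphi_\lambda(u(t)) - \varphi_\lambda(u(s)) = \int_s^t \left( \partial \varphi_\lambda(u(\tau)), \frac{\d u}{\d t}(\tau) \right)_{\! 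H} \d \tau
\end{equation*}
for $0 \le s \le t \le T$. By Proposition \ref{P:J-Y} (iv), one has $\|\partial \varphi_\lambda(u(\tau))\|_H \le \|\mathring{\partial \varphi}(u(\tau))\|_H \le \|g(\tau)\|_H$ uniformly in $\lambda$, and $\partial \varphi_\lambda(u(\tau)) \to \mathring{\partial \varphi}(u(\tau))$ strongly in $H$ for a.e.~$\tau$ as $\lambda \to 0_+$. Combined with the pointwise convergence $\varphi_\lambda(u(t)) \to \varphi(u(t))$ from Proposition \ref{P:MY} (iii) (and the bound $\varphi_\lambda \le \varphi$), Lebesgue's dominated convergence yields
\begin{equation*}
\varphi(u(t)) - \varphi(u(s)) = \int_s^t \left( \mathring{\partial \varphi}(u(\tau)), \frac{\d u}{\d t}(\tau) \right)_{\! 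H} \d \tau,
\end{equation*}
whose right-hand side defines an absolutely continuous function of $t$. Hence $\varphi \circ u$ is absolutely continuous on $[0,T]$ with $\frac{\d}{\d t}\varphi(u(t)) = (\mathring{\partial \varphi}(u(t)), \frac{\d u}{\d t}(t))_H$ for a.e.~$t$.

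To upgrade from the minimal section to an arbitrary $h \in \partial \varphi(u(t))$, I would fix $t$ at which both $u$ and $\varphi \circ u$ are differentiable (a.e.~by the previous step) and apply the subdifferential inequality at $u(t)$: for $\tau$ near $t$,
\begin{equation*}
\varphi(u(\tau)) - \varphi(u(t)) \ge \bigl( h, u(\tau) - u(t) \bigr)_H.
\end{equation*}
Dividing by $\tau - t > 0$ and sending $\tau \to t^+$ yields $\frac{\d}{\d t}\varphi(u(t)) \ge (h, \frac{\d u}{\d t}(t))_H$, while dividing by $\tau - t < 0$ and sending $\tau \to t^-$ reverses the sign and gives the opposite bound. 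Equality then holds for every $h \in \partial \varphi(u(t))$, proving the claim.

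The main obstacle I expect is the dominated convergence step in the limit $\lambda \to 0_+$: one needs a $\lambda$-independent $L^2$ majorant of $\partial \varphi_\lambda(u(\cdot))$, and this is precisely where the hypothesis of an $L^2$-selection $g(\tau) \in \partial \varphi(u(\tau))$ becomes indispensable, via the chain of inequalities $\|\partial \varphi_\lambda(u(\tau))\|_H \le \|\mathring{\partial \varphi}(u(\tau))\|_H \le \|g(\tau)\|_H$. Without such a selection, the regularized derivatives may fail to be equi-integrable and the identification of the limit derivative breaks down.
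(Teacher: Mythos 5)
Your argument is correct. Note that the paper does not prove this proposition at all: it is quoted verbatim from Br\'ezis~\cite[p.\,73, Lemma~3.3]{HB1}, so there is no in-paper proof to compare against. Your route is the standard Moreau--Yosida one and it is sound: the uniform majorant $\|\partial\varphi_\lambda(u(\tau))\|_H \le \|\mathring{\partial\varphi}(u(\tau))\|_H \le \|g(\tau)\|_H$ from Proposition~\ref{P:J-Y}\,(iv) together with $\|{\d u}/{\d t}\|_H \in L^2(0,T)$ justifies dominated convergence, the resulting identity $\varphi(u(t)) - \varphi(u(s)) = \int_s^t (\mathring{\partial\varphi}(u(\tau)), \tfrac{\d u}{\d t}(\tau))_H\,\d\tau$ (valid for all $s,t$ once one anchors at some $s$ with $u(s) \in D(\partial\varphi)$, which also forces $\varphi(u(t))<\infty$ for every $t$) gives absolute continuity, and the two-sided squeeze via the subdifferential inequality at a common point of differentiability of $u$ and $\varphi\circ u$ correctly identifies the derivative as $(h, \tfrac{\d u}{\d t}(t))_H$ for \emph{every} $h \in \partial\varphi(u(t))$. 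For comparison, Br\'ezis's original proof obtains absolute continuity more directly from the two-sided estimate $|\varphi(u(t))-\varphi(u(s))| \le \max\{\|g(t)\|_H,\|g(s)\|_H\}\,\|u(t)-u(s)\|_H$ without regularizing; your version trades that for the Yosida machinery already set up in \S\ref{Ss:subdif}, and additionally exhibits the derivative as $(\mathring{\partial\varphi}(u(\cdot)),\tfrac{\d u}{\d t}(\cdot))_H$ along the way, which is a mild bonus.
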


\subsection{Nonlocal time-differential operators}\label{Ss:fracderi}

Let $T \in (0,\infty)$ and $p \in [1,\infty]$ be fixed and let $X$ be a Banach space. We denote by 
$$
\A : D(\A) \subset L^p(0,T;X) \to L^p(0,T;X)
$$
the (standard) \emph{time-differential operator} defined by
\begin{align*}
D(\A) &:= \left\{ w \in W^{1,p}(0,T;X) \colon w(0) = 0 \right\},\\
\A(w) &:= \dfrac{\d w}{\d t} \quad \mbox{ for } \ w \in D(\A).
\end{align*}
Then $\A$ is a linear $m$-accretive operator in $L^p(0,T;X)$. 

Let $k \in L^1_{\rm loc}([0,\infty))$ satisfy ({K}) (hence, $k$ is a completely positive kernel). Then the \emph{nonlocal time-differential operator} 
$$
\B : D(\B) \subset L^p(0,T;X) \to L^p(0,T;X)
$$ 
is defined by
\begin{align*}
 D(\B) &:= \left\{ w \in L^p(0,T;X) \colon k*w \in D(\A) \right\} \nonumber\\
&\; = \left\{ w \in L^p(0,T;X) \colon k*w \in W^{1,p}(0,T;X), \ (k*w)(0) = 0 \right\},\\
\B (w) &:= \A \left( k*w \right) = \dfrac{\d}{\d t} (k*w) \quad \mbox{ for } \ w \in D(\B).
\end{align*}
Then we observe that
$$
D(\A) \subset D(\B).
$$
%
It is also known that $\B$ is a linear $m$-accretive operator in $L^p(0,T;X)$ under the assumption (K) (see, e.g.,~\cite{Cl84},~\cite{Gri85},~\cite{CP90},~\cite{VeZa08},~\cite{Za09}). Hence one can define the resolvent $\J_{1/n} : L^p(0,T;X) \to L^p(0,T;X)$ and the Yosida approximation $\B_{1/n} : L^p(0,T;X) \to L^p(0,T;X)$ of $\B$ for $n \in \N$ by
\begin{align*}
\J_{1/n} (w) &:= \left( I + n^{-1} \B \right)^{-1}(w), \\
\B_{1/n} (w) &:= n (w - \J_{1/n}(w)) = \B (\J_{1/n} w) = \dfrac{\d}{\d t} (k_n * w)
\end{align*}
for $w \in L^p(0,T;X)$. Here $k_n \in W^{1,1}(0,T)$ is a nonincreasing nonnegative kernel given by $k_n = n s_n$, where $s_n$ is a unique solution to the Volterra integral equation,
$$
s_n + n (\ell * s_n) = 1 \ \mbox{ in } (0,\infty).
$$
Then $k_n$ depends only on $\ell$ and $n$, and it is in particular independent of the choices of $X$ and $p$. Due to the general theory on linear $m$-accretive operators (see, e.g.,~\cite{Pazy},~\cite{B}), we assure that
\begin{equation}\label{Y-co}
\B_{1/n}(w) \to \B(w) \quad \mbox{ strongly in } L^p(0,T;X) \ \mbox{ as } \ n \to \infty,
\end{equation}
provided that $w \in D(\B)$. Indeed, $\B$ is densely defined in $L^p(0,T;X)$, and hence, for $w \in D(\B)$, we deduce that $\B_{1/n}(w) = \B(\J_{1/n}(w)) = \J_{1/n}(\B(w)) \to \B(w)$ strongly in $L^p(0,T;X)$ as $n \to \infty$. In particular, setting $X = \R$ and $p = 1$, we find that $w \equiv 1 \in D(\B)$, and hence, it follows from \eqref{Y-co} that
\begin{equation}\label{kn}
k_n \to k \quad \mbox{ strongly in } L^1(0,T) \ \mbox{ as } \ n \to \infty.
\end{equation}

In what follows, we always set
$$
X = H \quad \mbox{ and } \quad p = 2,
$$
unless noted otherwise, to consider the operators $\A$ and $\B$ for $k \in L^1_{\rm loc}([0,\infty))$ satisfying (K). Then $\A$ and $\B$ are maximal monotone in $L^2(0,T;H)$, and hence, thanks to Mazur's lemma along with the (demi)closedness of $\A$ and $\B$, the graphs $G(\A)$ and $G(\B)$ of $\A$ and $\B$, respectively, are weakly closed in $L^2(0,T;H) \times L^2(0,T;H)$.

Finally, we recall the following 
\begin{proposition}[See~{\cite[Proposition 3.5 and Corollary 3.6]{A19}}]\label{P:AB}
Let $T \in (0,\infty)$. Under the assumption {\rm (K)}, it holds that
    \begin{align*}
     \lefteqn{
      \int^t_0 \left( \A(u)(\tau), \B(u)(\tau) \right)_H \, \d \tau
      }\\
      &\geq \dfrac 1 2 \left( \ell * \|\B(u)(\cdot)\|_H^2 \right)(t) + \dfrac 1 2 \int^t_0 \ell(\tau) \|\B(u)(\tau)\|_H^2 \, \d \tau
    \end{align*}
    for all $u \in D(\A)$ and $t \in [0,T]$. It further implies the maximal monotonicity of $\A + \B$ in $L^2(0,T;H)$.
\end{proposition}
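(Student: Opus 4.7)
The plan is to combine the resolvent identity $u = \ell * \B(u)$ with a pointwise fractional chain-rule inequality for nonincreasing kernels, applied to a Yosida-regularized approximation of $\B(u)$. Since $u \in D(\A)$ satisfies $u(0) = 0$ (hence $(k*u)(0) = 0$), associativity of convolution together with $\ell * k = 1$ from (K) yields $u = \ell * \B(u)$. Setting $v := \B(u) \in L^2(0,T;H)$, so that $\A(u) = u'$ and $u = \ell * v$, the quantity to be bounded below is $\int_0^t (u'(\tau), v(\tau))_H \, \d\tau$.

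The core analytic ingredient is the pointwise identity, valid for every nonincreasing nonnegative $m \in L^1_{\rm loc}([0,\infty))$ and every $w \in W^{1,2}(0,T;H)$:
\begin{align*}
&2\bigl(w(\tau), (m*w)'(\tau)\bigr)_H - \frac{\d}{\d\tau}\bigl(m*\|w\|_H^2\bigr)(\tau) - m(\tau)\|w(\tau)\|_H^2 \\
&\qquad = -m(\tau)\|w(\tau) - w(0)\|_H^2 + 2\int_0^\tau m(\tau-s)\bigl(w(\tau) - w(s), w'(s)\bigr)_H \, \d s.
\end{align*}
For $m \in W^{1,1}_{\rm loc}$, integration by parts in $s$ rewrites the last integral as $m(\tau)\|w(\tau)-w(0)\|_H^2 - \int_0^\tau m'(\tau-s)\|w(\tau)-w(s)\|_H^2 \, \d s$, so the right-hand side is nonnegative since $m' \leq 0$; the general case (including singular kernels such as $\ell = k_{1-\alpha}$) follows by mollifying $m$ by a symmetric nonnegative kernel, which preserves monotonicity, and passing to the limit. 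Integrating this pointwise inequality from $0$ to $t$ and using $(m*\|w\|_H^2)(0) = 0$ produces
\begin{equation*}
2\int_0^t \bigl(w(\tau), (m*w)'(\tau)\bigr)_H \, \d\tau \geq \bigl(m*\|w\|_H^2\bigr)(t) + \int_0^t m(\tau)\|w(\tau)\|_H^2 \, \d\tau.
\end{equation*}

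Since $v = \B(u)$ is only $L^2$ and $\ell$ may be singular at $0$, one cannot apply the core inequality directly with $w := v$ and $m := \ell$. Instead, I would regularize via the Yosida approximation $v_n := \B_{1/n}(u) = k_n * u' \in W^{1,2}(0,T;H)$ (using $u(0)=0$ and $k_n \in W^{1,1}$), which satisfies $v_n(0) = 0$. From the defining identity $k_n/n + \ell * k_n = 1$ for the resolvent kernel one obtains $\ell * v_n = u - v_n/n$, and hence $(\ell * v_n)' = \A(u) - v_n'/n$. Applying the core inequality with $m := \ell$ and $w := v_n$, using $\int_0^t 2(v_n, v_n')_H \, \d\tau = \|v_n(t)\|_H^2$, and discarding the resulting nonnegative term $(1/n)\|v_n(t)\|_H^2$ from the right-hand side, yields
\begin{equation*}
2\int_0^t \bigl(v_n(\tau), \A(u)(\tau)\bigr)_H \, \d\tau \geq \bigl(\ell*\|v_n\|_H^2\bigr)(t) + \int_0^t \ell(\tau)\|v_n(\tau)\|_H^2 \, \d\tau.
\end{equation*}
As $n \to \infty$, $v_n \to v$ strongly in $L^2(0,T;H)$ by \eqref{Y-co}, so the left-hand side converges to $2\int_0^t (\B u, \A u)_H \, \d\tau$; on the right-hand side one passes to the limit via Fatou's lemma along a subsequence with $\|v_n\|_H^2 \to \|v\|_H^2$ a.e. This yields the asserted inequality. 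The maximal monotonicity of $\A + \B$ then follows as a corollary, since both operators are $m$-accretive in $L^2(0,T;H)$ and the nonnegativity of $(\A u, \B u)_{L^2(0,T;H)}$ just established furnishes the energy estimate needed in a standard regularization argument (solving $u + \A u + \B_{1/n}u = f$, which is an ODE since $\B_{1/n}$ is Lipschitz, and passing to the limit) to prove surjectivity of $I + \A + \B$.

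The principal obstacle is the simultaneous low regularity of $\ell$ (only nonincreasing and $L^1_{\rm loc}$, possibly singular at $0$) and of $v = \B u \in L^2$, which prevents any direct differentiation of $u = \ell * v$ or of $m*\|w\|_H^2$. The Yosida regularization of $\B$ is the crucial workaround: it replaces $v$ by a smoother surrogate $v_n$ while preserving the identity $u = \ell * v$ up to an $O(1/n)$ correction that vanishes cleanly in the limit.
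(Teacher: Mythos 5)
The paper does not prove Proposition \ref{P:AB} at all: it is imported verbatim from \cite[Proposition 3.5 and Corollary 3.6]{A19}. Your reconstruction is correct and follows the standard route for this result (the ``fundamental identity'' of Vergara--Zacher for nonincreasing kernels, applied to the Yosida regularization $v_n=\B_{1/n}(u)=k_n*u'$ together with the resolvent relation $\ell*v_n=u-v_n/n$, then Fatou in the limit $n\to\infty$); I verified the pointwise identity, the integration by parts giving nonnegativity of the remainder via $m'\le 0$, the cancellation of the $(1/n)\|v_n(t)\|_H^2$ term, and the limit passage, and they all check out. The only point I would tighten is the regularization of a possibly singular $\ell$: a \emph{symmetric} mollification requires extending $\ell$ to the left of $0$, which cannot be done monotonically when $\ell(0_+)=\infty$; one should instead use a one-sided device (e.g.\ the right Steklov averages $\ell_\vep(t)=\vep^{-1}\int_t^{t+\vep}\ell$, or first truncate $\ell\wedge M$ and let $M\to\infty$ by monotone convergence), which preserves monotonicity and converges in $L^1_{\rm loc}$ --- a routine repair that does not affect the argument. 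The concluding sketch of the maximal monotonicity of $\A+\B$ is likewise the standard perturbation argument and is adequate.
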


\section{Some devices}\label{S:dev}

This section is mainly devoted to developing \emph{integral forms of fractional chain-rule formulae} and a \emph{Lipschitz perturbation theory} for proving the main results stated in \S \ref{S:main}. Moreover, we also give a sufficient condition for the existence of continuous representatives of convolutions. 

\subsection{Fractional chain-rule formulae}\label{Ss:ch}

In this subsection, we develop some integral forms of (time-)fractional chain-rule formulae. To this end, we first recall the following nonlocal chain-rule formula for regular kernels, which is used in~\cite{A19} to derive differential forms of fractional chain-rule formulae:
\begin{proposition}[Nonlocal chain-rule formula~{\cite[Proposition 3.4]{A19}}]\label{P:nonloc_chain}
Let $T \in (0,\infty)$. Let $h \in W^{1,1}(0,T)$ be a nonnegative nonincreasing function and let $\varphi: H \to (-\infty,\infty]$ be a proper {\rm (}i.e., $\varphi \not\equiv \infty${\rm )} lower-semicontinuous convex functional. Let $u \in L^1(0,T;H)$ be such that $\varphi(u(\cdot)) \in L^1(0,T)$. Then for each $t \in (0,T)$ satisfying $u(t) \in D(\partial \varphi)$, it holds that
\begin{equation*}
\left(\dfrac{\d}{\d t} [h * (u-u_{0})](t), g \right)_{H}
\geq \dfrac{\d}{\d t} \left[ h * \big( \varphi(u(\cdot)) - \varphi(u_{0}) \big) \right](t)
\end{equation*}
for any $u_0 \in D(\varphi)$ and $g \in \partial \varphi(u(t))$.
\end{proposition}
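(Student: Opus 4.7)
The plan is to reduce the stated nonlocal chain-rule inequality to two pointwise applications of the subdifferential inequality at $u(t)$, using a convolution product rule for $W^{1,1}$-kernels. Since $h\in W^{1,1}(0,T)\hookrightarrow C([0,T])$, the trace $h(0)$ is well defined. First I would establish that for any $v\in L^{1}(0,T;H)$ (or scalar $v\in L^{1}(0,T)$),
\begin{equation*}
\dfrac{\d}{\d t}[h*v](t)=h(0)\,v(t)+[h'*v](t)\quad\text{for a.e.\ }t\in(0,T),
\end{equation*}
by writing $h(t-s)=h(0)+\int_{0}^{t-s}h'(\tau)\,\d\tau$, applying Fubini, and differentiating the resulting cumulative integral almost everywhere. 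Applied to $v=u-u_{0}\in L^{1}(0,T;H)$ after pairing with the fixed element $g$, and to the scalar function $v=\varphi(u(\cdot))-\varphi(u_{0})\in L^{1}(0,T)$, the desired inequality is reduced to showing
\begin{equation*}
h(0)\,A(t)+\int_{0}^{t}h'(t-s)\,A(s)\,\d s\ge 0,
\end{equation*}
where $A(s):=(u(s)-u_{0},g)_{H}-\bigl(\varphi(u(s))-\varphi(u_{0})\bigr)$.

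Next, I would split $A(s)=B(s,t)+A(t)$ with $B(s,t):=(u(s)-u(t),g)_{H}-\bigl(\varphi(u(s))-\varphi(u(t))\bigr)$, so that the $A(t)$-piece factors out of the integral. Using the elementary identity $\int_{0}^{t}h'(t-s)\,\d s=h(t)-h(0)$, the $h(0)A(t)$ term collapses with this factored contribution into a single $h(t)A(t)$, leaving the expression
\begin{equation*}
h(t)\,A(t)+\int_{0}^{t}\bigl(-h'(t-s)\bigr)\bigl(-B(s,t)\bigr)\,\d s.
\end{equation*}

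Finally, the subdifferential inequality $g\in\partial\varphi(u(t))$ applied with test point $u_{0}$ gives $A(t)\ge 0$, while applied with test point $u(s)$ it gives $B(s,t)\le 0$. Combined with $h(t)\ge 0$ and $-h'(t-s)\ge 0$ a.e.\ (by the monotonicity of $h$), every summand is manifestly nonnegative, which yields the claim.

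The main (and essentially the only) subtlety I expect is the first step: justifying the Bochner-valued product-rule identity $(h*v)'=h(0)v+h'*v$ for a merely $L^{1}$ function $v$ when $h\in W^{1,1}$. This is handled by the Fubini/Lebesgue-differentiation argument sketched above; once in hand, the remainder is an algebraic rearrangement driven purely by the monotonicity of $h$ and the convexity of $\varphi$.
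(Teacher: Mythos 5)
The paper does not prove this proposition; it is imported verbatim from \cite[Proposition 3.4]{A19}, so there is no in-paper proof to compare against. Your argument is correct and is essentially the standard proof of this ``fundamental identity''--type lemma: the product rule $(h*v)'=h(0)v+h'*v$ (valid a.e.\ for $h\in W^{1,1}$, $v\in L^1$, e.g.\ via your Fubini computation, which in fact gives $(h*v)(t)=h(0)\int_0^t v+\int_0^t(h'*v)$ directly), followed by the decomposition $A(s)=B(s,t)+A(t)$, the telescoping $\int_0^t h'(t-s)\,\d s=h(t)-h(0)$, and the two subgradient inequalities at the test points $u_0$ and $u(s)$. Two minor points worth recording: the hypothesis $\varphi(u(\cdot))\in L^1(0,T)$ is what guarantees $u(s)\in D(\varphi)$ for a.e.\ $s$, which you need in order to test the subdifferential inequality at $u(s)$ and conclude $B(s,t)\le 0$; and, since both sides of the asserted inequality involve a.e.-defined derivatives, your argument establishes the inequality at those $t$ with $u(t)\in D(\partial\varphi)$ at which the product-rule representations hold (i.e.\ a.e.\ among such $t$), which is exactly the form in which the proposition is used in the proof of Proposition \ref{P:frac_chain}.
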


In particular, if we choose $\varphi(\cdot) = (1/2)\|\cdot\|_H^2$, then we also have
\begin{equation*}
\left(\dfrac{\d}{\d t} [h * w](t), w(t) \right)_{H}
\geq \frac 12 \dfrac{\d}{\d t} \left[ h * \|w(\cdot)\|_H^2 \right](t)
\end{equation*}
for a.e.~$t \in (0,T)$ and $w \in L^2(0,T;H)$ (see~\cite{Za09}).

We next present some integral forms of fractional chain-rule formulae, which will play a crucial role to derive a priori estimates for approximate solutions to \eqref{ee}.

\begin{proposition}[Integral forms of fractional chain-rule formulae]\label{P:frac_chain}
Let $k \in L^1_{\rm loc}([0,\infty))$ be a function satisfying {\rm ({K})}. Let $\varphi: H \to (-\infty,\infty]$ be a proper {\rm (}i.e., $\varphi \not\equiv \infty${\rm )} lower-semicontinuous convex functional. Let $T \in (0,\infty)$, $u_0 \in D(\varphi)$ and let $u \in L^2(0,T;H)$ satisfy
\begin{itemize}
 \item $k*(u - u_0) \in W^{1,2}(0,T;H)$ and $[k*(u-u_0)](0) = 0$,
 \item $u(t) \in D(\partial \varphi)$ for a.e.~$t \in (0,T)$. 
\end{itemize}
Assume that there exists $g \in L^2(0,T;H)$ such that $g(t) \in \partial \varphi(u(t))$ for a.e.~$t \in (0,T)$. Then the following holds true\/{\rm :}
\begin{enumerate}
 \item[(i)] It holds that
$$
\int^t_0 \left(\dfrac{\d}{\d t} [k * (u-u_{0})](\tau), g(\tau) \right)_{H} \, \d \tau
\geq \left[ k * \big( \varphi(u(\cdot)) - \varphi(u_{0}) \big) \right](t)
$$
for a.e.~$t \in (0,T)$.
 \item[(ii)] Let $\ell$ be the conjugate kernel of $k$ {\rm (}see {\rm (K))}. It holds that
$$
\left[ \ell * \left(\dfrac{\d}{\d t} [k * (u-u_{0})](\cdot), g(\cdot) \right)_{H} \right](t)
\geq \varphi(u(t)) - \varphi(u_{0})
$$
for a.e.~$t \in (0,T)$.
\end{enumerate}
\end{proposition}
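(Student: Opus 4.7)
The plan is to approximate the singular kernel $k$ by the regularized kernels $k_n \in W^{1,1}(0,T)$ introduced in \S\ref{Ss:fracderi}, apply the nonlocal chain-rule formula of Proposition \ref{P:nonloc_chain} with $h=k_n$, and pass to the limit $n\to\infty$ using the convergences \eqref{Y-co} and \eqref{kn}. A preliminary check is that $\varphi(u(\cdot))\in L^1(0,T)$: testing the subdifferential inequality at $(u(\tau),g(\tau))$ against $u_0$ yields the upper bound $\varphi(u(\tau))\leq \varphi(u_0)+(g(\tau),u(\tau)-u_0)_H$, which belongs to $L^1(0,T)$ since $u,g\in L^2(0,T;H)$, while any affine minorant of the proper lsc convex $\varphi$ supplies a uniform $L^1$ lower bound. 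Since $k_n$ is nonnegative, nonincreasing and in $W^{1,1}(0,T)$, Proposition \ref{P:nonloc_chain} applied with $h=k_n$ yields, for a.e.~$\tau\in(0,T)$,
$$
\bigl(\B_{1/n}(u-u_0)(\tau),\, g(\tau)\bigr)_H \;\geq\; \frac{\d}{\d\tau}\bigl[k_n*(\varphi(u(\cdot))-\varphi(u_0))\bigr](\tau),
$$
where I have used the identity $\B_{1/n}(u-u_0)=\frac{\d}{\d t}[k_n*(u-u_0)]$ from \S\ref{Ss:fracderi}.

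For (i), I would integrate the above inequality over $(0,t)$. Since $k_n*(\varphi(u(\cdot))-\varphi(u_0))\in W^{1,1}(0,T)$ vanishes at $t=0$, the right-hand side becomes $[k_n*(\varphi(u(\cdot))-\varphi(u_0))](t)$. The hypothesis $k*(u-u_0)\in W^{1,2}(0,T;H)$ with $[k*(u-u_0)](0)=0$ means $u-u_0\in D(\B)$ in the $L^2$ setting, so \eqref{Y-co} gives $\B_{1/n}(u-u_0)\to \B(u-u_0)=\frac{\d}{\d t}[k*(u-u_0)]$ strongly in $L^2(0,T;H)$; combined with $g\in L^2(0,T;H)$ and Cauchy--Schwarz, this yields uniform-in-$t$ convergence of the cumulative integrals on the left. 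On the right, \eqref{kn} and Young's convolution inequality give $k_n*(\varphi(u(\cdot))-\varphi(u_0))\to k*(\varphi(u(\cdot))-\varphi(u_0))$ in $L^1(0,T)$, hence a.e.~subsequential convergence, completing the proof of (i).

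For (ii), I would convolve the approximate inequality with $\ell$. The crucial identity, to be established for the $L^p(0,T;X)$ realization of $\B$, is
$$
\ell*\B_{1/n}(w) \;=\; \J_{1/n}(w),
$$
which I would derive as follows: since $\J_{1/n}(w)\in D(\B)$ and $\B(\J_{1/n}(w))=\B_{1/n}(w)$, one has $k*\J_{1/n}(w)=1*\B_{1/n}(w)=(k*\ell)*\B_{1/n}(w)=k*(\ell*\B_{1/n}(w))$, and the relation $\ell*k=1$ makes convolution by $k$ injective on $L^1(0,T)$ (since $k*g=0$ forces $1*g=\ell*k*g=0$, whence $g\equiv 0$), so the identity follows. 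Applying this in the $L^1(0,T)$ setting with $w=\varphi(u(\cdot))-\varphi(u_0)$, I obtain, for a.e.~$t\in(0,T)$,
$$
\bigl[\ell*(\B_{1/n}(u-u_0),g)_H\bigr](t) \;\geq\; \J_{1/n}\bigl(\varphi(u(\cdot))-\varphi(u_0)\bigr)(t).
$$
Passing to the limit $n\to\infty$: Cauchy--Schwarz and Yosida convergence yield $(\B_{1/n}(u-u_0),g)_H\to(\B(u-u_0),g)_H$ in $L^1(0,T)$, and Young's inequality with $\ell\in L^1(0,T)$ then gives $L^1$-convergence (hence a.e.~convergence along a subsequence) of the left-hand side; on the right, $\J_{1/n}(\varphi(u(\cdot))-\varphi(u_0))\to\varphi(u(\cdot))-\varphi(u_0)$ in $L^1(0,T)$, because the $m$-accretive operator $\B$ is densely defined in $L^1(0,T)$. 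A subsequence extraction yields (ii).

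The main obstacle I anticipate is the identity $\ell*\B_{1/n}(w)=\J_{1/n}(w)$ and, relatedly, running the approximation-and-limit scheme at the $L^1$ level rather than $L^2$, which is required because $\varphi(u(\cdot))-\varphi(u_0)$ is only guaranteed to lie in $L^1(0,T)$. Once this identity is in place, Proposition \ref{P:nonloc_chain} supplies the pointwise information and the remainder is convergence bookkeeping; Proposition \ref{P:nonloc_chain} alone is insufficient for (ii), because it is an inequality at the level of nonlocal derivatives, whereas (ii) requires inverting that derivative via $\ell*$, which is exactly what the key identity accomplishes.
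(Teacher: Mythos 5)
Your proposal is correct, and for part (i) it coincides with the paper's argument (regularize $k$ by $k_n$, apply Proposition \ref{P:nonloc_chain}, integrate over $(0,t)$, and pass to the limit using \eqref{Y-co} and \eqref{kn}); the preliminary verification that $\varphi(u(\cdot)) \in L^1(0,T)$ is also exactly the paper's. For part (ii) the two arguments diverge in how the limit of the right-hand side $\ell * \frac{\d}{\d t}[k_n * (\varphi(u(\cdot))-\varphi(u_0))]$ is handled. The paper rewrites it as $\frac{\d}{\d t}[\ell * k_n * (\varphi(u(\cdot))-\varphi(u_0))]$, tests against nonnegative $\phi \in C^\infty_c(0,T)$, integrates by parts, and exploits $\ell * k_n \to \ell * k = 1$ in $L^1$ to recover $\varphi(u(t))-\varphi(u_0)$ distributionally, concluding by the arbitrariness of $\phi \geq 0$. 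You instead identify $\ell * \B_{1/n}(w) = \J_{1/n}(w)$ for the $L^1(0,T;\R)$ realization of $\B$ — your derivation via $k*\J_{1/n}(w) = 1*\B_{1/n}(w) = k*(\ell*\B_{1/n}(w))$ and the injectivity of convolution by $k$ is sound, and the paper's \S\ref{Ss:fracderi} does set up $\B_{1/n}(w) = \frac{\d}{\d t}(k_n*w)$ with $k_n$ independent of $X$ and $p$, so the identification of the right-hand side of \eqref{ch-1} with $\B_{1/n}(\varphi(u(\cdot))-\varphi(u_0))$ in $L^1$ is legitimate — and then invoke strong resolvent convergence $\J_{1/n} \to I$ on the densely defined $m$-accretive $\B$ together with a.e.\ subsequential convergence. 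Your route buys a cleaner, purely operator-theoretic limit passage that avoids test functions entirely and makes transparent why the $\ell$-convolution ``inverts'' the nonlocal derivative; the paper's distributional route avoids having to work with the resolvent of $\B$ in the $L^1$ setting and relies only on the elementary identity $\ell * k = 1$. Both are complete.
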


\begin{proof}
First of all, it follows from the assumption that $\varphi(u(\cdot)) \in L^1(0,T)$. Indeed, we see that
$$
\varphi(u(t)) \leq \varphi(u_0) + (g(t), u(t) - u_0)_H
$$
for a.e.~$t \in (0,T)$, and moreover, $u$ and $g$ belong to $L^2(0,T;H)$. Furthermore, $\varphi$ is affinely bounded from below (see, e.g.,~\cite{HB1}), we conclude that $\varphi(u(\cdot)) \in L^1(0,T)$.

Assertion (i) can be proved as in~\cite{A19}; however, we give a proof for the convenience of the reader. For $n \in \N$, let $k_n \in W^{1,1}(0,T)$ be a regularized kernel (see \S \ref{Ss:fracderi}). Thanks to Proposition \ref{P:nonloc_chain}, we have
\begin{equation}\label{ch-1}
\left(\dfrac{\d}{\d t} [k_n * (u-u_{0})](t), g(t) \right)_{H}
\geq \dfrac{\d}{\d t} \left[ k_n * \big( \varphi(u(\cdot)) - \varphi(u_{0}) \big) \right](t)
\end{equation}
for a.e.~$t \in (0,T)$. Integrate both sides over $(0,t)$, we infer that
$$
\int^t_0 \left(\dfrac{\d}{\d t} [k_n * (u-u_{0})](\tau), g(\tau) \right)_{H} \, \d \tau
\geq \left[ k_n * \big( \varphi(u(\cdot)) - \varphi(u_{0}) \big) \right](t)
$$
for $t \in [0,T]$. Since $\B_{1/n}(u-u_0) = (\d/\d t)[k_n * (u-u_0)] \to \B(u-u_0) = (\d/\d t)[k * (u-u_0)]$ strongly in $L^2(0,T;H)$ (by $u-u_0 \in D(\B)$ and \eqref{Y-co}) and $k_n \to k$ strongly in $L^1(0,T)$ (see \eqref{kn}), passing to the limit as $n \to \infty$, we deduce that
$$
\int^t_0 \left(\dfrac{\d}{\d t} [k * (u-u_{0})](\tau), g(\tau) \right)_{H} \, \d \tau \geq \left[ k * \big( \varphi(u(\cdot)) - \varphi(u_{0}) \big) \right](t)
$$
for a.e.~$t \in (0,T)$.

As for (ii), we convolve both sides of \eqref{ch-1} with $\ell$ to see that
\begin{align}
\lefteqn{
\left[ \ell * \left(\dfrac{\d}{\d t} [k_n * (u-u_{0})](\cdot), g(\cdot) \right)_{H} \right](t)
}\nonumber\\
&\geq \ell * \dfrac{\d}{\d t} \left[ k_n * \big( \varphi(u(\cdot)) - \varphi(u_{0}) \big) \right](t)\nonumber\\
&= \dfrac{\d}{\d t} \left[
\ell * k_n * \big( \varphi(u(\cdot)) - \varphi(u_{0}) \big)
\right](t)
\label{ch-2}
\end{align}
for a.e.~$t \in (0,T)$. Here we used the fact that $[ k_n * ( \varphi(u(\cdot)) - \varphi(u_{0}) ) ](0) = 0$. Moreover, we claim that
\begin{equation}\label{ch-3}
\ell * \left(\dfrac{\d}{\d t} [k_n * (u-u_{0})](\cdot), g(\cdot) \right)_{H}
\to \ell * \left(\dfrac{\d}{\d t} [k * (u-u_{0})](\cdot), g(\cdot) \right)_{H}
\end{equation}
strongly in $L^1(0,T)$ as $n \to \infty$. Indeed, we see that
\begin{align*}
\lefteqn{
\left\|
\ell * \left(\dfrac{\d}{\d t} [k_n * (u-u_{0})](\cdot), g(\cdot) \right)_{H}
- \ell * \left(\dfrac{\d}{\d t} [k * (u-u_{0})](\cdot), g(\cdot) \right)_{H}
\right\|_{L^1(0,T)}
}\\
&\leq \|\ell\|_{L^1(0,T)} \left\| \B_{1/n}(u-u_0) - \B(u-u_0) \right\|_{L^2(0,T;H)} \|g\|_{L^2(0,T;H)}
\to 0.
\end{align*}
We next test both sides of \eqref{ch-2} by a nonnegative test function $\phi \in C^\infty_c(0,T)$. It then follows that
\begin{align*}
\lefteqn{
\int^T_0 \phi(t) \left[ \ell * \left(\dfrac{\d}{\d t} [k_n * (u-u_{0})](\cdot), g(\cdot) \right)_{H} \right](t) \, \d t
}\nonumber\\
&\geq \int^T_0 \phi(t) \dfrac{\d}{\d t} \left[
\ell * k_n * \big( \varphi(u(\cdot)) - \varphi(u_{0}) \big)
\right](t) \, \d t\\
&= - \int^T_0 \phi'(t) \left[
\ell * k_n * \big( \varphi(u(\cdot)) - \varphi(u_{0}) \big)
\right](t) \, \d t\\
&\to - \int^T_0 \phi'(t) \left[
\ell * k * \big( \varphi(u(\cdot)) - \varphi(u_{0}) \big)
\right](t) \, \d t\\
&\hspace{-1mm}\stackrel{\text{(K)}}=  - \int^T_0 \phi'(t) \left( \int^t_0 \big[ \varphi(u(\tau)) - \varphi(u_{0}) \big] \, \d \tau \right) \, \d t\\
&=  \int^T_0 \phi(t) \big[ \varphi(u(t)) - \varphi(u_{0}) \big] \, \d t.
\end{align*}
Hence combining the above with \eqref{ch-3}, we conclude that
\begin{align*}
\lefteqn{
\int^T_0 \phi(t) \left[ \ell * \left(\dfrac{\d}{\d t} [k * (u-u_{0})](\cdot), g(\cdot) \right)_{H} \right](t) \, \d t
}\\
&\geq \int^T_0 \phi(t) \big[ \varphi(u(t)) - \varphi(u_{0}) \big] \, \d t,
\end{align*}
whence it follows from the arbitrariness of $\phi \geq 0$ that
$$
\left[ \ell * \left(\dfrac{\d}{\d t} [k * (u-u_{0})](\cdot), g(\cdot) \right)_{H} \right](t) \geq \varphi(u(t)) - \varphi(u_{0})
$$
for a.e.~$t \in (0,T)$. This completes the proof.
\end{proof}

\subsection{Continuous representatives of convolutions}\label{Ss:conti-rep}

In this subsection, we give some criterion on existence of continuous representatives of convolutions, which will be used to prove continuity of strong solutions to the Cauchy problem \eqref{ee} and to check initial conditions in a classical sense. Moreover, it will also be used in the next subsection concerned with the Lipschitz perturbation theory.

\begin{lemma}[Continuous representatives of convolutions]\label{L:conrep}
Let $T \in (0,\infty)$, $a \in H$ and let $g \in L^1(0,T)$ be nonnegative and nonincreasing. Let $G \in L^1(0,T;H)$ be such that $g * \|G(\cdot)\|_H^2 \in L^\infty(0,T)$ and define $u \in L^1(0,T;H)$ by
\begin{equation}\label{vol0}
u(t) = a + (g * G)(t) \quad \mbox{ for a.e.~} \ t \in (0,T).
\end{equation}
Then $u$ has a continuous representative $\tilde u \in C([0,T];H)$, that is, $u(t) = \tilde{u}(t)$ for a.e.~$t \in (0,T)$, such that
\begin{align*}
\|\tilde u(t)-\tilde u(s)\|_{H}
&\leq \|g\|_{L^1(0,|t-s|)}^{1/2} \left\| g * \|G(\cdot)\|_H^2 \right\|_{L^\infty(0,T)}^{1/2}\\
&\quad + \|g (|t-s| + \,\cdot\,) - g(\,\cdot\,)\|_{L^1(0,T-|t-s|)}^{1/2} \left\| g * \|G(\cdot)\|_H^2 \right\|_{L^\infty(0,T)}^{1/2}
\end{align*}
for all $t,s \in [0,T]$. Moreover, it holds that $\tilde u(t) \to a$ strongly in $H$ as $t \to 0_+$.
\end{lemma}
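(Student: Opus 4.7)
The plan is to estimate $\|\tilde u(t) - \tilde u(s)\|_H$ directly from the Volterra formula \eqref{vol0} by splitting the difference of the two convolutions into a ``new mass'' piece on $(s, t)$ and a ``translated'' piece on $(0, s)$, then bounding both by the weighted vector-valued Cauchy--Schwarz inequality $\|\int f G\|_H \le (\int |f|)^{1/2}(\int |f|\|G\|_H^2)^{1/2}$ in such a way that one factor is controlled by a modulus of $g$ alone and the other by $\|g * \|G(\cdot)\|_H^2\|_{L^\infty(0,T)}$. Once this uniform modulus of continuity is obtained on the full-measure set on which $u$ is pointwise defined, completeness of $H$ yields a unique continuous extension $\tilde u \in C([0,T]; H)$.

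\medskip

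Concretely, for $0 \le s < t \le T$ at which $(g*G)(s)$ and $(g*G)(t)$ both exist, I would write
\[
\tilde u(t) - \tilde u(s) = \int_s^t g(t-\tau)\, G(\tau)\, \d \tau + \int_0^s \bigl[g(t-\tau) - g(s-\tau)\bigr]\, G(\tau)\, \d \tau.
\]
The first piece is handled by Cauchy--Schwarz with nonnegative weight $g(t-\tau)$: after the substitution $\sigma = t-\tau$ the scalar factor becomes $\|g\|_{L^1(0,t-s)}$, while the weighted factor is a part of $[g * \|G\|_H^2](t)$ and is thus dominated by $\|g * \|G(\cdot)\|_H^2\|_{L^\infty(0,T)}$. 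For the second piece the monotonicity assumption gives $|g(t-\tau) - g(s-\tau)| = g(s-\tau) - g(t-\tau) \ge 0$ on $(0, s)$, so the same weighted Cauchy--Schwarz applies with weight $g(s-\tau) - g(t-\tau)$. The substitution $\sigma = s-\tau$ turns the scalar factor into $\int_0^s [g(\sigma) - g(\sigma + (t-s))]\, \d \sigma$, which is bounded by $\|g(|t-s|+\cdot) - g(\cdot)\|_{L^1(0,T-|t-s|)}$ (since $s \le T-(t-s)$), and the weighted factor is bounded by $[g * \|G\|_H^2](s) \le \|g * \|G(\cdot)\|_H^2\|_{L^\infty(0,T)}$ after simply discarding the nonnegative term $\int_0^s g(t-\tau) \|G(\tau)\|_H^2\, \d \tau$. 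Adding the two bounds produces the announced modulus of continuity, and the case $t < s$ is symmetric.

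\medskip

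The limit $\tilde u(t) \to a$ as $t \to 0_+$ is read off by setting $s = 0$ in the modulus: the first term vanishes because $g \in L^1(0,T)$, and the second by continuity of translation in $L^1(\R)$ applied to $g$ extended by zero outside $(0, T)$. The step I expect to be the main (minor) obstacle is the bookkeeping in the second piece: the weight $g(s-\tau) - g(t-\tau)$ is a signed-looking \emph{difference}, and one must notice that, thanks to the monotonicity of $g$ together with the nonnegativity of $\|G(\tau)\|_H^2$, the weighted Cauchy--Schwarz factor can still be majorized by the single quantity $\|g * \|G(\cdot)\|_H^2\|_{L^\infty(0,T)}$ by simply dropping the smaller summand. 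This is precisely where the hypothesis that $g$ be \emph{nonincreasing} (rather than, say, merely of bounded variation) is essential.
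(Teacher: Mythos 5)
Your proposal is correct and follows essentially the same route as the paper: the same splitting of $(g*G)(t)-(g*G)(s)$ into the piece on $(s,t)$ and the translated piece on $(0,s)$, the same weighted Cauchy--Schwarz with weights $g(t-\tau)$ and $g(s-\tau)-g(t-\tau)$, and the same use of monotonicity to majorize the weighted factor by $\bigl(g*\|G(\cdot)\|_H^2\bigr)(s)$ before extending by completeness from the full-measure set where \eqref{vol0} holds. The paper's treatment of $\tilde u(t)\to a$ is marginally more direct (it takes $s=0$ in the raw estimate, so only the first term appears and no translation-continuity argument is needed), but this is a cosmetic difference.
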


\begin{proof}
By assumption, there exists $I \subset (0,T)$ such that $|I| = 0$ and \eqref{vol0} holds for all $t \in (0,T) \setminus I$. Then letting $s,t \in (0,T) \setminus I$ satisfy $s \leq t$ and using H\"older's inequality, one observes that
\begin{align*}
\lefteqn{
\|u(t)-u(s)\|_{H}
}\\
&= \| (g * G)(t) - (g * G)(s) \|_{H}\\
&= \left\| \int^t_0 g(t-\tau) G(\tau) \, \d \tau - \int^s_0 g(s-\tau) G(\tau)\,\d \tau \right\|_{H}\\ 
&\leq \int^t_s g(t-\tau) \| G(\tau) \|_H \,\d \tau + \int^s_0 | g(t-\tau) - g(s-\tau) | \| G(\tau) \|_{H} \,\d \tau\\ 
&= \int^t_s g(t-\tau)^{1/2} g(t-\tau)^{1/2} \| G(\tau) \|_H \,\d \tau\\
&\quad + \int^s_0 | g(t-\tau) - g(s-\tau) |^{1/2} | g(t-\tau) - g(s-\tau) |^{1/2} \| G(\tau) \|_{H} \,\d \tau\\ 
&\leq \|g\|_{L^1(0,t-s)}^{1/2} \left\| g * \|G(\cdot)\|_H^2 \right\|_{L^\infty(0,T)}^{1/2}\\
&\quad + \|g (t-s + \,\cdot\,) - g(\,\cdot\,)\|_{L^1(0,T-(t-s))}^{1/2} \left\| g * \|G(\cdot)\|_H^2 \right\|_{L^\infty(0,T)}^{1/2}.
\end{align*}
Here we also used $| g(t-\tau) - g(s-\tau) | = g(s-\tau) - g(t-\tau) \leq g(s-\tau)$ from the nonincrease and nonnegativity of $g$ to derive the last inequality. For each $t \in [0,T]$ one can take a sequence $(t_n)$ in $(0,T) \setminus I$ converging to $t$, and then, $(u(t_n))$ forms a Cauchy sequence in $H$. Hence $\tilde u(t)$ can be defined as its limit, since the limit is also uniquely determined. Therefore there exists a continuous representative $\tilde u \in C([0,T];H)$ of $u$ such that $\tilde u$ fulfills the desired inequality for any $s,t \in [0,T]$. Moreover, repeating the same calculation above, we also find that
\begin{align*}
\|u(t)-a\|_{H} &\leq \|g\|_{L^1(0,t)}^{1/2} \left\| g * \|G(\cdot)\|_H^2 \right\|_{L^\infty(0,T)}^{1/2}
\end{align*}
for all $t \in (0,T) \setminus I$. Hence $\tilde u(t) \to a$ strongly in $H$ as $t \to 0_+$. This completes the proof.
\end{proof}

\subsection{Lipschitz perturbation theory}\label{Ss:Lip}

A Lipschitz perturbation theory to time-fractional gradient flows for convex energies is established in~\cite[\S 5]{A19}. In this subsection, it is further developed in order to derive better regularity (which can enable us to guarantee the initial condition in a classical sense) of strong solutions to the Cauchy problem,
\begin{equation}\label{Lip}
\frac{\d}{\d t} \left[ k * (u - u_0) \right] (t) + \partial \varphi(u(t)) + B(u(t)) \ni f(t) \ \mbox{ in } H, \quad 0 < t < T,
\end{equation}
where $T \in (0,\infty ]$, $\varphi : H \to (-\infty,\infty]$ is a proper lower-semicontinuous convex functional, $B : H \to H$ is a Lipschitz continuous operator, i.e., there exists a constant $L_B \geq 0$ such that
\begin{equation}\label{B}
\|B(w_1)-B(w_2)\|_H \leq L_B\|w_1-w_2\|_H \quad \mbox{ for }\ w_1,w_2 \in H,
\end{equation}
for every $u_0 \in D(\varphi)$ and $f \in L^2(0,T;H)$ satisfying $\ell * \|f(\cdot)\|_H^2 \in L^\infty(0,T)$. Strong solutions to \eqref{Lip} are defined analogously to Definition \ref{D:sol}. The following theorem will be used to construct approximate solutions to \eqref{ee}.

\begin{theorem}[Lipschitz perturbation theory (cf.~see~\cite{A19})]\label{T:Lip}
Assume that {\rm ({K})} and \eqref{B} are satisfied and let $T \in (0,\infty]$. In case $T \in (0,\infty)$, for every $u_0 \in D(\varphi^1)$ and $f \in L^2(0,T;H)$ satisfying $\ell * \|f(\cdot)\|_H^2 \in L^\infty(0,T)$, the Cauchy problem \eqref{Lip} admits a unique strong solution $u$ on $[0,T]$ such that
\begin{equation}\label{regu-Lip}
u \in C([0,T];H), \quad u(0) = u_0 \quad \mbox{ and } \quad \varphi(u(\cdot)) \in L^\infty(0,T).
\end{equation}
In case $T = \infty$, for every $u_0 \in D(\varphi^1)$ and $f \in L^2_{\rm loc}([0,\infty);H)$ satisfying $\ell * \|f(\cdot)\|_H^2 \in L^\infty_{\rm loc}([0,\infty))$, the Cauchy problem \eqref{Lip} with $T = \infty$ admits a unique strong solution $u$ on $[0,\infty)$ satisfying \eqref{regu-Lip} for every $T > 0$.
\end{theorem}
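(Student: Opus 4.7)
The plan is to leverage the Lipschitz perturbation theory already established in~\cite[\S5]{A19} for time-fractional gradient flows of convex energies to produce a strong solution in the sense of Definition~\ref{D:sol}, and then to upgrade its regularity using Proposition~\ref{P:frac_chain}(ii) and Lemma~\ref{L:conrep}. First I would invoke that reference to obtain, for $T\in(0,\infty)$, a unique $u\in L^2(0,T;H)$ together with $\xi\in L^2(0,T;H)$ such that $\xi(t)\in\partial\varphi(u(t))$ a.e., $k*(u-u_0)\in W^{1,2}(0,T;H)$ with vanishing initial trace, and
$$
G(t):=\frac{\d}{\d t}[k*(u-u_0)](t)=f(t)-\xi(t)-B(u(t)) \quad \text{a.e.\ } t\in(0,T).
$$

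The core a priori bound comes from Proposition~\ref{P:frac_chain}(ii) applied to $\varphi$ along $\xi$, which gives $(\ell*(G,\xi)_H)(t)\geq \varphi(u(t))-\varphi(u_0)$; substituting the equation and applying Young's inequality yields
$$
\varphi(u(t))+\tfrac12(\ell*\|\xi(\cdot)\|_H^2)(t)\leq \varphi(u_0)+(\ell*\|f(\cdot)-B(u(\cdot))\|_H^2)(t)
$$
for a.e.\ $t\in(0,T)$. Splitting $\|f-B(u)\|_H^2\leq 2\|f\|_H^2+4L_B^2\|u\|_H^2+4\|B(0)\|_H^2$, the hypothesis $\ell*\|f(\cdot)\|_H^2\in L^\infty(0,T)$ absorbs the first term; the remaining $\ell*\|u(\cdot)\|_H^2$ I would re-express via the identity $u-u_0=\ell*G$, which follows by convolving $k*(u-u_0)=1*G$ with $\ell$ and invoking $\ell*k\equiv1$. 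Using $\|G\|_H^2\leq 3\|f\|_H^2+3\|\xi\|_H^2+3\|B(u)\|_H^2$ and iterated Young convolution estimates produces a closed nonlinear Volterra inequality for $\varphi(u(\cdot))$ and $\ell*\|G(\cdot)\|_H^2$, and a fractional Gronwall-type lemma (of the class alluded to in the introduction) closes the loop, delivering $\varphi(u(\cdot))\in L^\infty(0,T)$ together with $\ell*\|G(\cdot)\|_H^2\in L^\infty(0,T)$.

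For the continuity conclusion, I would apply Lemma~\ref{L:conrep} with $g=\ell$, $a=u_0$ and the above $G$ to the identity $u-u_0=\ell*G$ obtained above; since $\ell*\|G(\cdot)\|_H^2\in L^\infty(0,T)$, this yields a continuous representative $\tilde u\in C([0,T];H)$ with $\tilde u(0)=u_0$ which I identify with $u$, thus securing \eqref{regu-Lip}. Uniqueness would be proved in parallel by writing the equation for the difference $w=u_1-u_2$ of two strong solutions, pairing with $\xi_1-\xi_2$ via Proposition~\ref{P:frac_chain}(ii) applied to $\tfrac12\|\cdot\|_H^2$, exploiting monotonicity of $\partial\varphi$ and the Lipschitz bound on $B$, and invoking the same Volterra--Gronwall lemma to force $w\equiv 0$.

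For the case $T=\infty$, uniqueness on every bounded subinterval implies that the solutions produced on intervals $[0,T_n]$ with $T_n\to\infty$ agree on overlaps, hence glue into a single $u\in L^2_{\mathrm{loc}}([0,\infty);H)$ that is a strong solution on $[0,\infty)$ and satisfies the corresponding local-in-time regularity; note that only uniqueness (not any semigroup property, which fails here by Remark~\ref{R:semigr}) is needed for this step. I expect the main technical obstacle to be the global-in-time closure of the a priori estimate in the presence of the Lipschitz coupling $B(u)$, since this loops $u$ back into the right-hand side through two nested convolutions with $\ell$, and handling it requires a nonlinear Volterra Gronwall inequality tailored to the completely positive kernel $k$.
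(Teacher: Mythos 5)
Your proposal is correct and follows essentially the same route as the paper: invoke the existence/uniqueness result of~\cite{A19}, derive the a priori bound via Proposition \ref{P:frac_chain}(ii) together with the identity $u - u_0 = \ell * \frac{\d}{\d t}[k*(u-u_0)]$, close the resulting Volterra inequality with a Gronwall-type lemma (the paper's Lemma \ref{Gron}; note that the inequality you obtain is in fact \emph{linear} in the unknowns, so no genuinely nonlinear Gronwall lemma is required), and deduce continuity and the initial condition from Lemma \ref{L:conrep}, gluing via uniqueness for $T=\infty$. The only cosmetic difference is that the paper tests the equation with $\frac{\d}{\d t}[k*(u-u_0)]$ rather than with $\xi$, which yields the bound on $\ell*\|\frac{\d}{\d t}[k*(u-u_0)](\cdot)\|_H^2$ (the quantity Lemma \ref{L:conrep} actually needs) in one step instead of recovering it afterwards from the equation.
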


\begin{proof}
Let $T \in (0,\infty)$ be fixed. With the aid of Theorem 5.1 of~\cite{A19}, for every $u_0 \in D(\varphi)$ and $f \in L^2(0,T;H)$, one can assure that there exists a unique strong solution $u \in L^2(0,T;H)$ on $[0,T]$ to \eqref{Lip}, that is, there is $\xi \in L^2(0,T;H)$ such that
\begin{equation}\label{ee-Lip}
\dfrac{\d}{\d t} \left[ k * (u - u_0) \right](t) + \xi(t) + B(u(t)) = f(t), \quad \xi(t) \in \partial \varphi(u(t))
\end{equation}
for a.e.~$t \in (0,T)$. Hence it suffices to show \eqref{regu-Lip} under the (additional) assumption $\ell * \|f(\cdot)\|_H^2 \in L^\infty(0,T)$. We assume $\varphi \geq 0$ without loss of generality. Multiplying both sides of \eqref{ee-Lip} by $(\d/\d t) [k * (u-u_{0})](t)$, we have
\begin{align*}
\lefteqn{
\left\| \dfrac{\d}{\d t} \left[ k *(u - u_0) \right](t) \right\|_H^2 + \left(
\xi(t), \dfrac{\d}{\d t} \left[ k *(u - u_0) \right](t) \right)_H
}\\
&= - \left( B(u(t)), \dfrac{\d}{\d t} \left[ k *(u - u_0) \right](t) \right)_H + \left( f(t), \dfrac{\d}{\d t} \left[ k *(u - u_0) \right](t) \right)_H\\
&\leq \dfrac 12 \left\| \dfrac{\d}{\d t} \left[ k *(u - u_0) \right](t) \right\|_H^2 + \|B(u(t))\|_H^2 + \|f(t)\|_H^2\\
&\leq \dfrac 12 \left\| \dfrac{\d}{\d t} \left[ k *(u - u_0) \right](t) \right\|_H^2 + \left( \|B(u_0)\|_H + L_B\|u(t)-u_0\|_H \right)^2\\
&\quad + \|f(t)\|_H^2
\end{align*}
for a.e.~$t \in (0,T)$. Convolving both sides with $\ell$ and using (ii) of Proposition \ref{P:frac_chain}, we infer that
\begin{align}
\lefteqn{
\dfrac 12 \left( \ell * \left\| \dfrac{\d}{\d t} \left[ k * (u-u_{0}) \right] (\cdot) \right\|_{H}^{2} \right)(t) + \varphi(u(t)) - \varphi(u_{0})
}\nonumber\\
&\leq \left\| \ell * \|f(\cdot)\|_{H}^{2} \right\|_{L^\infty(0,T)}
+ 2 \|B(u_0)\|_{H}^{2} \|\ell\|_{L^1(0,T)} \nonumber\\
&\quad + 2 L_B^{2} \left( \ell * \|u(\cdot)-u_{0}\|_{H}^{2}\right)(t) \label{eL}
\end{align}
for a.e.~$t \in (0,T)$. Since $\varphi$ is nonnegative, one has
\begin{align*}
\lefteqn{
\left( \ell * \left\| \dfrac{\d}{\d t} \left[ k * (u-u_{0}) \right] (\cdot)\right\|_{H}^{2} \right)(t) + \varphi(u(t))
}\\
&\leq C\left[ \varphi(u_0) + \left\| \ell * \|f(\cdot)\|_{H}^{2} \right\|_{L^\infty(0,T)} + \|B(u_0)\|_{H}^{2} \|\ell\|_{L^1(0,T)} \right]\\
&\quad + C \left( \ell * \|u(\cdot)-u_{0}\|_{H}^{2} \right)(t) 
\end{align*}
for a.e.~$t \in (0,T)$. Here we note the following fact:
\begin{align*}
\|u(t)-u_{0}\|_{H}^{2}   
&= \left\| \dfrac{\d}{\d t} \left[ k * \ell * (u-u_{0}) \right](t) \right\|_{H}^{2}\\
&= \left\| \left( \ell * \dfrac{\d}{\d t} \left[ k*(u-u_{0}) \right] \right)(t) \right\|_{H}^{2}\\
&= \left\| 
\int_{0}^{t} \ell^{1/2}(t-s) \ell^{1/2}(t-s) \left( \dfrac{\d}{\d t} \left[ k*(u-u_{0}) \right] \right)(s) \, \d s
\right\|_{H}^{2}\\
&\leq
\left( \int_{0}^{t} \ell(t-s)  \, \d s \right)
\left( \int_{0}^{t} \ell(t-s) \left\| \dfrac{\d}{\d t} \left[k*(u-u_{0})\right](s) \right\|_H^{2} \, \d s \right)\\
&\leq \|\ell\|_{L^1(0,T)}
\left(
\ell * \left\| \dfrac{\d}{\d t} \left[k * (u-u_{0})\right](\cdot) \right\|_{H}^{2}
\right)(t)
\end{align*}
for~a.e.~$t \in (0,T)$. It follows that
\begin{align*}
\lefteqn{
\|u(t)-u_0\|_H^2
}\\
&\leq C \|\ell\|_{L^1(0,T)} \left[ \varphi(u_0) + \left\| \ell * \|f(\cdot)\|_{H}^{2} \right\|_{L^\infty(0,T)} + \|B(u_0)\|_{H}^{2} \|\ell\|_{L^1(0,T)} \right]\\
&\quad + C \|\ell\|_{L^1(0,T)} \left( \ell * \|u(\cdot)-u_{0}\|_{H}^{2} \right)(t) 
\end{align*}
for a.e.~$t \in (0,T)$. Thus applying a Gronwall-type lemma for a Volterra integral inequality (see Lemma \ref{Gron} below), we obtain
$$
\esssup_{t \in (0,T)} \|u(t)\|_H \leq C,
$$
which along with \eqref{eL} yields
$$
\esssup_{t \in (0,T)} \left[ \left( \ell * \left\| \dfrac{\d}{\d t} \left[ k * (u-u_{0}) \right] (\cdot)\right\|_{H}^{2} \right)(t) + \varphi(u(t)) \right] \leq C.
$$
Moreover, we recall that
$$
u(t) = u_0 + \left( \ell * \dfrac{\d}{\d t} \left[ k * (u-u_{0}) \right] \right)(t) \quad \mbox{ for a.e.~} \ t \in (0,T). 
$$
Therefore Lemma \ref{L:conrep} asserts that there exists a continuous representative of $u$ (still denoted by the same letter) and $u(0) = u_0$.

Finally, we prove existence of strong solutions on $[0,\infty)$ to the Cauchy problem \eqref{Lip} with $T = \infty$ for $u_0 \in D(\varphi^1)$ and $f \in L^2_{\rm loc}([0,\infty);H)$ satisfying $\ell * \|f(\cdot)\|_H^2 \in L^\infty_{\rm loc}([0,\infty))$. For each finite $T > 0$, from the fact which we have proved so far, the Cauchy problem \eqref{Lip} admits a unique strong solution $u^{(T)} \in C([0,T];H)$ on $[0,T]$ satisfying \eqref{regu-Lip}. Hence due to the uniqueness of strong solutions to \eqref{Lip} (i.e., $u^{(T)}|_{[0,S]} = u^{(S)}$ if $S \leq T$), one can define $u \in C([0,\infty);H)$ such that $u|_{[0,T]} = u^{(T)}$ for $T > 0$. We also set $\xi := f - B(u(\cdot)) - (\d/\d t)[k*(u-u_0)] \in L^2_{\rm loc}([0,\infty);H)$ and $\eta := B(u(\cdot)) \in C([0,\infty);H)$. Therefore it follows that $u \in C([0,\infty);H)$ is a strong solution on $[0,\infty)$ to \eqref{Lip} with $T = \infty$ and $u(0) = u_0$. This completes the proof.
\end{proof}

We close this subsection with the following Gronwall-type lemma for a Volterra integral inequality:

\begin{lemma}\label{Gron}
Let $S \in (0,\infty)$ and $1 \leq r \leq \infty$. Let $\phi \in L^1(0,S)$, $h \in L^{r}(0,S)$ and $g \in L^1(0,S)$ be nonnegative functions such that 
\begin{equation}\label{G-ineq}
\phi(t) \leq h(t) + (g * \phi)(t) \quad  \text{ for a.e.~} \ t \in (0,S).
\end{equation}
Then it holds that $\phi \in L^r(0,S)$, and moreover, there exists a constant $C_0 \geq 0$ depending only on $S$ and $g$ such that
$$
\|\phi \|_{L^{r}(0,S)} \leq C_0 \|h \|_{L^r(0,S)}.
$$
\end{lemma}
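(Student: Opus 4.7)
The plan is to dominate $\phi$ by the convolution of $h$ with the resolvent kernel of $g$, the latter being constructed via a Neumann-type series of iterated convolutions. Since $g \in L^{1}(0,S)$, the dominated convergence theorem ensures that $\|\e^{-\lambda \cdot} g(\cdot)\|_{L^{1}(0,S)} \to 0$ as $\lambda \to \infty$. I would fix $\lambda = \lambda(g) > 0$ so large that, setting $g_\lambda(t) := \e^{-\lambda t} g(t)$, one has $\|g_\lambda\|_{L^{1}(0,S)} \leq 1/2$. From the elementary identity $(f_1 * f_2)_\lambda = (f_1)_\lambda * (f_2)_\lambda$ combined with Young's convolution inequality on $L^1$, the $n$-fold self-convolution $g^{*n}$ satisfies
$$
\|g^{*n}\|_{L^{1}(0,S)} \leq \e^{\lambda S} \|g_\lambda\|_{L^{1}(0,S)}^{n} \leq \e^{\lambda S} 2^{-n}, \quad n \in \N,
$$
so that $r := \sum_{n=1}^{\infty} g^{*n}$ converges absolutely in $L^{1}(0,S)$ to a nonnegative kernel whose $L^{1}$-norm depends only on $S$ and $g$.

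Next, since $g, \phi \geq 0$, iterating \eqref{G-ineq} by convolving both sides with $g$ and substituting back into the original inequality yields, for every $N \geq 1$ and a.e.~$t \in (0,S)$,
$$
\phi(t) \leq h(t) + \sum_{n=1}^{N-1} (g^{*n} * h)(t) + (g^{*N} * \phi)(t).
$$
The remainder obeys $\|g^{*N} * \phi\|_{L^{1}(0,S)} \leq \e^{\lambda S} 2^{-N} \|\phi\|_{L^{1}(0,S)} \to 0$, so $(g^{*N} * \phi)(t) \to 0$ for a.e.~$t$ along some subsequence. By monotone convergence, the nondecreasing partial sums $\sum_{n=1}^{N-1} g^{*n} * h$ converge pointwise a.e.~to $r * h$. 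Passing to the limit yields
$$
\phi(t) \leq h(t) + (r * h)(t) \quad \mbox{ for a.e.~} t \in (0,S).
$$

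Finally, Young's convolution inequality on $L^r$ gives $r * h \in L^{r}(0,S)$ with $\|r * h\|_{L^{r}(0,S)} \leq \|r\|_{L^{1}(0,S)} \|h\|_{L^{r}(0,S)}$, hence $\phi \in L^{r}(0,S)$ and the estimate holds with $C_0 := 1 + \|r\|_{L^{1}(0,S)}$, which depends only on $S$ and $g$ as required. The only delicate step is the uniform control of $\|g^{*n}\|_{L^{1}(0,S)}$: a direct application of Young's inequality to bound $g^{*n}$ is ineffective when $\|g\|_{L^{1}(0,S)} > 1$, and the exponential-weight trick above is used precisely to absorb this obstruction into the harmless prefactor $\e^{\lambda S}$, thereby producing a geometric decay that both makes the series defining $r$ summable and forces the iteration remainder to vanish.
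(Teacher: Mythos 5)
Your argument is correct, but it takes a genuinely different route from the paper. The paper first mollifies, setting $\phi_n := \rho_n * \phi \in L^\infty(0,S)$ and $h_n := \rho_n * h$, so that the weighted norm $\|\phi_n \,\e^{-M\cdot}\|_{L^r(0,S)}$ is finite a priori; it then applies the same exponential-weight trick you use, together with Young's inequality, to absorb the convolution term directly into the left-hand side, and finally passes to the limit $n \to \infty$ (strongly in $L^1$, weakly in $L^r$ for $r>1$) to recover the estimate with $C_0 = 2\e^{MS}$. You instead iterate the inequality and build the resolvent kernel $\sum_{n\geq 1} g^{*n}$, whose summability in $L^1(0,S)$ is secured by exactly the same exponential weighting; the mollification step becomes unnecessary because the iteration only ever uses $\phi \in L^1$, and the limiting pointwise bound $\phi \leq h + (\sum_n g^{*n}) * h$ is in fact stronger than the norm inequality and yields all exponents $1 \leq r \leq \infty$ in one stroke via Young's inequality, whereas the paper's weak-compactness passage is written out only for $r < \infty$. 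All the individual steps check out: the identity $(f_1*f_2)_\lambda = (f_1)_\lambda * (f_2)_\lambda$, the preservation of a.e.\ inequalities under convolution with a nonnegative kernel, the a.e.\ convergence of the remainder along a subsequence, and the monotone-convergence identification of the limit of the partial sums. The only blemish is notational: you denote the resolvent kernel by $r$, which collides with the Lebesgue exponent $r$ in the statement; rename it (say $\varrho$) before use.
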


\begin{proof}
Let $(\rho_n)$ be a mollifier\footnote{The mollifier $(\rho_n)$ may be supposed to enjoy the following properties: $\rho_n \in C^\infty_c([0,\infty))$, $\rho_n \geq 0$, $\int^\infty_0 \rho_n(s) \, \d s = 1$ and $\mathrm{supp}\, \rho_n = [0,1/n]$ for $n \in \N$. Then for $1 \leq p < \infty$ and $T \in (0,\infty)$, one can check in a standard way that $\rho_n * w \in L^\infty(0,T)$ and $\rho_n * w \to w$ strongly in $L^p(0,T)$ for any $w \in L^p(0,T)$.} and convolve both sides of \eqref{G-ineq} with $\rho_n \geq 0$. It then follows that
\begin{equation}\label{Gro-n}
\phi_n(t) \leq h_n(t) + (g* \phi_n)(t) \quad  \text{ for a.e.~} \ t \in (0,S),
\end{equation}
where $\phi_n := \rho_n * \phi \in L^\infty(0,S)$ and $h_n := \rho_n * h \in L^\infty(0,S)$. We observe that
\begin{align*}
(g * \phi_n)(t)
&= \int^t_0 g(t-s) \phi_n(s) \, \d s\\
&= \e^{Mt} \int^t_0 g(t-s) \e^{-M(t-s)} \phi_n(s) \e^{-M s} \, \d s\\
&= \e^{Mt} \left[ (g \,\e^{-M{\,\cdot\,}}) * (\phi_n \,\e^{-M{\,\cdot\,}}) \right](t) \quad \mbox{ for } \ t \in [0,S],
\end{align*}
where $M$ is a constant which will be determined later and $\e^{-M{\,\cdot\,}}$ stands for the function $t \mapsto \e^{-M t}$ from $[0,\infty) \to \R$, and which along with the Young's convolution inequality yields
\begin{align*}
\|(g * \phi_n) \, \e^{-M{\,\cdot\,}} \|_{L^{r}(0,S)}
\leq \|g \,\e^{-M{\,\cdot\,}}\|_{L^1(0,S)} \|\phi_n \,\e^{-M{\,\cdot\,}}\|_{L^r(0,S)}.
\end{align*}
Hence it follows from \eqref{Gro-n} that
\begin{align*}
\lefteqn{
\|\phi_n \,\e^{-M{\,\cdot\,}}\|_{L^{r}(0,S)}
}\\
&\leq \|h_n \,\e^{-M{\,\cdot\,}}\|_{L^{r}(0,S)} + \|(g * \phi_n) \, \e^{-M{\,\cdot\,}}\|_{L^{r}(0,S)}\\
&\leq \|h_n \,\e^{-M{\,\cdot\,}}\|_{L^{r}(0,S)} + \|g \,\e^{-M{\,\cdot\,}}\|_{L^1(0,S)} \|\phi_n \,\e^{-M{\,\cdot\,}}\|_{L^r(0,S)}.
\end{align*}
We take a positive constant $M$ large enough so that
$$
\|g \,\e^{-M{\,\cdot\,}}\|_{L^1(0,S)} \leq \frac 12.
$$
Here we note that $M$ depends only on $S$ and $g$. Therefore we deduce that
$$
\|\phi_n\,\e^{-M{\,\cdot\,}}\|_{L^{r}(0,S)} \leq 2 \|h_n\,\e^{-M{\,\cdot\,}}\|_{L^{r}(0,S)}
$$
for $n \in \N$. We pass to the limit as $n \to \infty$. Note that $\phi_n \to \phi$ strongly in $L^1(0,S)$ and $h_n \to h$ strongly in $L^r(0,S)$. Hence as for $r > 1$, one finds that $\phi_n \to \phi$ weakly in $L^r(0,S)$ up to a subsequence. Thus for $r \in [1,\infty)$ we can reach 
$$
\|\phi \, \e^{-M{\,\cdot\,}}\|_{L^{r}(0,S)} \leq  2 \|h \, \e^{-M{\,\cdot\,}}\|_{L^r(0,S)},
$$
which yields the desired conclusion with the choice $C_0 = 2 \e^{MS}$.
\end{proof}

\section{Proof of Theorem \ref{T:LE}}\label{S:LE}

In this section, we give a proof for Theorem \ref{T:LE}. To this end, we first set up a Gronwall-type lemma for nonlinear Volterra integral inequalities.

\begin{lemma}\label{L:Gron1}
Let $S \in (0,\infty)$ and let $\phi \in L^{\infty}(0,S)$ be a nonnegative function. Suppose that there exist a constant ${a} \geq 0$, a nonnegative function $g \in L^1_{\rm loc}([0,\infty))$ and a nondecreasing function $M : [0,\infty) \to [0,\infty)$ such that
$$
\phi(t) \leq {a} + [g * M(\phi(\cdot))](t)  \quad \text{ for a.e.~} \ t \in (0,S).
$$
Then there exists a constant $R > 0$ which depends only on $M(a+1)$ and $g$ such that
$$
\|\phi\|_{L^{\infty}(0, \min \{R,S\})} \leq {a}+1.
$$
\end{lemma}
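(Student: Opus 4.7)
The plan is a bootstrap/contradiction argument on the essential-supremum envelope $\Phi(T) := \esssup_{s \in (0,T)} \phi(s)$. Since $g \in L^1_{\mathrm{loc}}([0,\infty))$, the map $R \mapsto \|g\|_{L^1(0,R)}$ tends to $0$ as $R \to 0_+$, so I would first fix $R > 0$, depending only on $M(a+1)$ and $g$, satisfying
$$
M(a+1)\,\|g\|_{L^1(0,R)} \leq \tfrac{1}{2}.
$$
It then suffices to show $\Phi(\min\{R,S\}) \leq a+1$.

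The function $\Phi$ is nondecreasing and left-continuous (the latter because $\esssup_{(0,T_n)} \phi \to \esssup_{(0,T)} \phi$ whenever $T_n \uparrow T$, by countable additivity of the Lebesgue measure applied to the super-level sets). Combined with the a priori bound $\phi \in L^\infty(0,S)$, the hypothesis and monotonicity of $M$ yield
$$
\Phi(T) \leq a + M(K)\,\|g\|_{L^1(0,T)}, \qquad K := \|\phi\|_{L^\infty(0,S)},
$$
which tends to $a$ as $T \to 0_+$. Hence the closed set
$$
A := \bigl\{T \in [0,\min\{R,S\}] \colon \Phi(T) \leq a+1 \bigr\}
$$
contains a right-neighborhood of $0$, and by left-continuity its supremum $T^{*}$ belongs to $A$.

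The decisive step is to assume $T^{*} < \min\{R,S\}$ for contradiction. Under this assumption, monotonicity of $\Phi$ forces $\Phi(T) > a+1$ for every $T \in (T^{*}, \min\{R,S\}]$. On the other hand, applying the hypothesis at a.e.\ $t \in (T^{*}, T)$ and splitting the convolution at $T^{*}$, while using $\phi \leq a+1$ a.e.\ on $(0,T^{*})$ (since $T^{*} \in A$), monotonicity of $M$, and $\phi \leq K$ on the complementary piece, one obtains
$$
\phi(t) \leq a + M(a+1)\,\|g\|_{L^1(0,t)} + M(K)\,\|g\|_{L^1(0,t-T^{*})} \leq a + \tfrac{1}{2} + M(K)\,\|g\|_{L^1(0, T - T^{*})}.
$$
Choosing $T$ sufficiently close to $T^{*}$ (a choice that depends on $K$) makes the last term at most $1/2$, so that $\phi \leq a+1$ a.e.\ on $(0,T)$ and hence $\Phi(T) \leq a+1$, contradicting $T > T^{*} = \sup A$.

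The main obstacle I foresee is the nonlinearity embedded in $M(\phi(\cdot))$: a naive Gronwall does not apply, and the implicit inequality $\Phi(T) \leq a + M(\Phi(T))\,\|g\|_{L^1(0,T)}$ obtained by a direct essential supremum is not strong enough to extract $\Phi(T) \leq a+1$, since $M$ is only assumed nondecreasing (not continuous or concave). The splitting of the convolution at $T^{*}$ is what decouples the already-controlled past, bounded via $M(a+1)$ and independent of $K$, from the residual on an arbitrarily thin interval, where one pays the price $M(K)$ but gains the smallness of $\|g\|_{L^1(0, T - T^{*})}$. Crucially, the auxiliary constant $K$ appears only inside the internal contradiction step and not in the definition of $R$, so the final $R$ depends solely on $M(a+1)$ and $g$, as required by the statement.
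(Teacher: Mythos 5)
Your proposal is correct and follows essentially the same route as the paper's proof: both define the supremal time up to which $\|\phi\|_{L^\infty}\leq a+1$, choose $R$ so that $M(a+1)\|g\|_{L^1(0,R)}$ is small, and derive a contradiction by splitting the convolution at that supremal time, paying $M(a+1)$ on the controlled past and $M(\|\phi\|_{L^\infty(0,S)})$ only on an arbitrarily thin residual interval. The only differences are cosmetic (you work with the left-continuous envelope $\Phi$ and an a.e.\ bound on a short interval, while the paper selects a single point $t_\phi$ where $\phi(t_\phi)>a+1$), and your observation that $K$ enters only the internal contradiction step, not the definition of $R$, matches the paper's dependence claim.
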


Here it is noteworthy that $\phi$ is not supposed to be continuous in Lemma \ref{L:Gron1}. Actually, in order to prove Theorem \ref{T:LE}, we shall apply Lemma \ref{L:Gron1} for some possibly \emph{discontinuous} functions $\phi \in L^\infty(0,T)$ due to the subdiffusive nature of the problem.

\begin{proof}
We can assume $M > 0$ in $(0,\infty)$ without loss of generality, and we denote by the same letter the zero extension of $\phi$ onto $[0,\infty)$. Since $g \in L^1_{\rm loc}([0,\infty))$, there exists a constant $R > 0$ such that 
\begin{align*}
\int_0^R g(t) \, \d t < \dfrac{1}{4M({a}+1)}.
\end{align*}
Here $R$ depends only on $M({a}+1)$ and $g$. Moreover, we note that
\begin{align*}
\phi(t) &\leq {a} + [g * M(\phi(\cdot))](t)\\
&\leq {a} + [g * M(\|\phi\|_{L^\infty(0,S)})](t) = {a} + \|g\|_{L^1(0,t)} M(\|\phi\|_{L^\infty(0,S)})
\end{align*}
for a.e.~$t \in (0,\infty)$. Hence we similarly observe that $\|\phi\|_{L^\infty(0,t)} \leq {a} + 1$ for $t > 0$ small enough. Set 
$$
T_{\phi} := \sup\{ t \in (0,\infty) \colon \|\phi\|_{L^{\infty}(0,t)} \leq {a} + 1\} > 0.
$$
We claim that $R \leq T_{\phi}$. Suppose to the contrary that $T_{\phi} < R$. Then since $g \in L^1_{\rm loc}([0,\infty))$, there exists $t_{\phi} \in (T_{\phi}, R)$ such that  
\begin{equation}\label{**}
\left\{
\begin{aligned}
&{a}+1 < \phi(t_\phi) \leq {a} + [g * M(\phi(\cdot))](t_\phi),\\
&M( \|\phi\|_{L^{\infty}(0,S)} ) \int_{0}^{t_{\phi} - T_{\phi}} g(t) \, \d t < \frac{1}{4}.
\end{aligned}
\right.
\end{equation}
\prf{Indeed, let $n \in \N$ be large enough so that $T_n := T_\phi + 1/n \in (T_\phi,R)$. Then we find that
$$
a + 1 < \|\phi\|_{L^\infty(0,T_n)} = \inf \{C \colon \phi(t) \leq C \ \mbox{ for a.e.~} \ t \in (0,T_n) \},
$$
which along with the fact that $\|\phi\|_{L^\infty(0,T_\phi)} \leq a + 1$ yields
$$
\phi(t) > a + 1 \quad \mbox{ for all } \ t \in I_n
$$
for some measurable set $I_n \subset (T_\phi,T_\phi + 1/n)$ having a positive measure $|I_n| > 0$. Hence choosing $n \in \N$ large enough, one can take $t_\phi \in I_n$ satisfying \eqref{**}.} 
Therefore we derive from the monotonicity of $M$ and the choices of $t_{\phi}$ and $T_{\phi}$ that
\begin{align*}
{a}+1 &< \phi(t_{\phi}) \leq {a} + [g * M(\phi(\cdot))](t_{\phi}) \\
&= {a} + \int_{0}^{T_{\phi}} g(t_{\phi} -t)  M(\phi(t)) \, \d t 
+ \int_{T_{\phi}}^{t_{\phi}} g(t_{\phi} -t) M(\phi(t)) \, \d t\\
&\leq {a} + M({a}+1) \int_{0} ^{T_{\phi}} g(t_{\phi} -t) \, 	\d t + M \left( \|\phi\|_{L^{\infty}(0,S)} \right) \int_{T_{\phi}}^{t_{\phi}} g(t_{\phi} -t) \, \d t\\
&= {a} + M({a}+1) \int^{t_\phi}_{t_\phi-T_{\phi}} g(\tau) \, \d \tau + M \left( \|\phi\|_{L^{\infty}(0,S)} \right) \int^{t_{\phi}-T_{\phi}}_0 g(\tau) \, \d \tau\\
&\leq {a} + M({a}+1) \int^R_0 g(\tau) \, \d \tau + M \left( \|\phi\|_{L^{\infty}(0,S)} \right) \int^{t_{\phi}-T_{\phi}}_0 g(\tau) \, \d \tau\\
&\leq {a} + M({a}+1) \dfrac{1}{4M({a}+1)} + \dfrac{1}{4} = {a} + \dfrac{1}{2},
\end{align*}
which yields a contradiction. Thus we conclude that $R \leq T_{\phi}$, which implies the desired conclusion.
\end{proof}

We are now ready to prove Theorem \ref{T:LE}.

\begin{proof}[Proof of Theorem {\rm \ref{T:LE}}]
Thanks to the affine boundedness of proper lower-semicontinuous convex functionals (see, e.g.,~\cite{HB1}), one can assume $\varphi^i \geq 0$ for $i = 1,2$ without loss of generality.\footnote{Indeed, for each $i = 1,2$, there exist $a_i \in \R$ and $z_i \in H$ such that $\varphi^i(w) \geq (z_i,w)_H + a_i$ for $w \in H$. Set $\hat \varphi^i(w) := \varphi^i(w) - (z_i,w)_H - a_i \geq 0$. We note that $\partial \hat \varphi^i(w) = \partial \varphi^i(w) - z_i$. Hence \eqref{ee} is rewritten as $(\d/\d t)[k * (u-u_0)](t) + \partial \hat \varphi^1(u(t)) - \partial \hat\varphi^2(u(t)) \ni f(t) - z_1 + z_2 =: \hat f(t)$. Moreover, $\hat\varphi^1$ and $\hat\varphi^2$ fulfill (A1) and (A2) as well.} Let $T \in (0,\infty)$ be fixed and consider the following approximate problems:
\begin{equation}\label{ee-aprx}
\frac{\d}{\d t} \left[ k * (u_\lambda - u_0) \right] (t) + \partial \varphi^1(u_\lambda(t)) - \partial \varphi^2_\lambda(u_\lambda(t)) \ni f(t) \ \mbox{ in } H, \quad 0 < t < T,
\end{equation}
where $\varphi^2_\lambda$ stands for the Moreau--Yosida regularization of $\varphi^2$, for $\lambda \in (0,1)$ (see \S \ref{Ss:subdif}). Since $\partial \varphi^2_\lambda : H \to H$ is Lipschitz continuous, due to Theorem \ref{T:Lip}, for $\lambda \in (0,1)$, $u_0 \in D(\varphi^1)$ and $f \in L^2(0,T;H)$ satisfying $\ell * \|f(\cdot)\|_H^2 \in L^\infty(0,T)$, we can prove that the Cauchy problem \eqref{ee-aprx} admits a strong solution $u_\lambda \in L^2(0,T;H)$ on $[0,T]$ such that 
$$
u_\lambda \in C([0,T];H), \quad u_\lambda(0) = u_0,\quad \varphi^1(u_\lambda(\cdot)) \in L^\infty(0,T).
$$
Let $g_\lambda \in L^2(0,T;H)$ be such that $g_\lambda(t) \in \partial \varphi^1(u_\lambda(t))$ and
\begin{equation}\label{ee-lam}
\frac{\d}{\d t} \left[ k * (u_\lambda - u_0) \right] (t) + g_\lambda(t) - \partial \varphi^2_\lambda(u_\lambda(t)) = f(t)
\end{equation}
for a.e.~$t \in (0,T)$.

We next establish a priori estimates uniformly for $\lambda \in (0,1)$. Test \eqref{ee-lam} by $g_\lambda(t)$ to see that
\begin{align*}
\lefteqn{
\left( \frac{\d}{\d t} \left[ k * (u_\lambda - u_0) \right] (t), g_\lambda(t) \right)_H + \|g_\lambda(t)\|_H^2
}\\
&= \left( \partial \varphi^2_\lambda(u_\lambda(t)), g_\lambda(t) \right)_H + \left(f(t), g_\lambda(t) \right)_H\\
&\leq \| \partial \varphi^2_\lambda(u_\lambda(t)) \|_H \| g_\lambda(t) \|_H + \| f(t) \|_H \|g_\lambda(t)\|_H
\end{align*}
for a.e.~$t \in (0,T)$. From (iv) of Proposition \ref{P:J-Y} and (A2), using Young's inequality, we find that 
\begin{align}
\lefteqn{
\| \partial \varphi^2_\lambda (u_\lambda(t)) \|_H
\leq \| \mathring{\partial \varphi^2}(u_\lambda(t))\|_H
}\nonumber\\
&\leq \nu_1 \|g_\lambda(t)\|_H + M_1( \varphi^1 (u_\lambda(t)) + \|u_\lambda(t)\|_H) \nonumber\\
&\leq \nu_1 \|g_\lambda(t)\|_H \nonumber \\
&\quad + M_1\left( (1/2)\|u_\lambda(t)-u_0 \|_H^2 + \varphi^1 (u_\lambda(t)) + \|u_0\|_H + 1 \right).\label{dphi2}
\end{align}
Hence we have
\begin{align*}
\lefteqn{
\left( \frac{\d}{\d t} \left[ k * (u_\lambda - u_0) \right] (t), g_\lambda(t) \right)_H + \|g_\lambda(t)\|_H^2
}\\
&\leq \nu_1 \|g_\lambda(t)\|_H^2 \\
&\quad + M_1 \left( (1/2)\|u_\lambda(t)-u_0 \|_H^2 + \varphi^1 (u_\lambda(t)) + \|u_0\|_H + 1 \right) \| g_\lambda(t) \|_H \\
&\quad + \| f(t) \|_H \| g_\lambda(t) \|_H
\end{align*}
for a.e.~$t \in (0,T)$. Therefore employing Young's inequality, one can take a constant $C_1 = C_1(\nu_1) > 0$ such that
\begin{align*}
\lefteqn{
\left( \frac{\d}{\d t} \left[ k * (u_\lambda - u_0) \right] (t), g_\lambda(t) \right)_H + \frac{1-\nu_1}2 \|g_\lambda(t)\|_H^2
}\\
&\leq C_1 M_1 \left( (1/2)\|u_\lambda(t)-u_0 \|_H^2 + \varphi^1 (u_\lambda(t)) + \|u_0\|_H + 1 \right)^2 + C_1 \| f(t) \|_H ^2
\end{align*}
for a.e.~$t \in (0,T)$. Convolving both sides with $\ell$ and employing (ii) of Proposition \ref{P:frac_chain}, we have 
\begin{align}
\lefteqn{
\varphi^1(u_\lambda(t)) - \varphi^1(u_0) + \frac{1-\nu_1}2 \left( \ell * \|g_\lambda(\cdot)\|_H^2 \right)(t)
} \nonumber\\
&\leq C_1 \left[ \ell * M_1 \left( (1/2)\|u_\lambda(\cdot)-u_0 \|_H^2 + \varphi^1(u_\lambda(\cdot)) + \|u_0\|_H + 1 \right)^2 \right](t) \nonumber\\
&\quad + C_1 \left(\ell * \| f(\cdot) \|_H^2 \right)(t)\label{ei1}
\end{align}
for a.e.~$t \in (0,T)$. 

Moreover, testing \eqref{ee-lam} by $u_\lambda(t)-u_0$, we derive from \eqref{dphi2} that
\begin{align*}
\lefteqn{
\left( \frac{\d}{\d t} \left[ k * (u_\lambda - u_0) \right] (t), u_\lambda(t) - u_0 \right)_H + \left(g_\lambda(t), u_\lambda(t)-u_0 \right)_H
}\\
&\leq \| \partial \varphi^2_\lambda(u_\lambda(t)) \|_H \| u_\lambda(t) - u_0 \|_H + \| f(t) \|_H \| u_\lambda(t) - u_0 \|_H
\end{align*}
for a.e.~$t \in (0,T)$. Recalling the definition of subdifferential and employing Young's inequality again, one can take a constant $C_2 = C_2(\nu_1) > 0$ such that
\begin{align*}
\lefteqn{
\left( \frac{\d}{\d t} \left[ k * (u_\lambda - u_0) \right] (t), u_\lambda(t) - u_0 \right)_H +  \varphi^1(u_\lambda(t)) - \varphi^1(u_0) 
}\\
&\leq \frac{1-\nu_1}4 \|g_\lambda(t)\|_H^2\\
&\quad + C_2 \Big\{ M_1 \left( (1/2)\|u_\lambda(t)-u_0 \|_H^2 + \varphi^1 (u_\lambda(t)) + \|u_0\|_H + 1 \right)^2 \\
&\quad + \|f(t)\|_H^2 + (1/2)\|u_\lambda(t)-u_0\|_H^2 \Big\}
\end{align*}
for a.e.~$t \in (0,T)$. Convolving both sides with $\ell$ and applying (ii) of Proposition \ref{P:frac_chain} to $\varphi(\cdot) := (1/2)\|\,\cdot\, - u_0\|_H^2$, we deduce from the nonnegativity of $\varphi^1$ that 
\begin{align}
\lefteqn{
\frac 12 \|u_\lambda(t) - u_0\|_H^2 
} \nonumber\\
&\leq \|\ell\|_{L^1(0,T)} \varphi^1(u_0) + \frac{1-\nu_1}4 \left( \ell * \|g_\lambda(\cdot)\|_H^2 \right)(t) \nonumber\\
&\quad + C_2 \left[ \ell * M_1 \left( (1/2)\|u_\lambda(\cdot)-u_0 \|_H^2 + \varphi^1 (u_\lambda(\cdot)) + \|u_0\|_H + 1 \right)^2 \right](t) \nonumber\\
&\quad + C_2 \left( \ell * \|f(\cdot)\|_H^2 \right)(t) + C_2 \left[ \ell * (1/2)\|u_\lambda(\cdot)-u_0\|_H^2 \right](t)\label{ei2}
\end{align}
for a.e.~$t \in (0,T)$. Combining \eqref{ei1} and \eqref{ei2}, we obtain
\begin{align}
\lefteqn{
\underbrace{
\frac 12 \|u_\lambda(t) - u_0\|_H^2 + \varphi^1(u_\lambda(t))
}_{=: \, \phi(t)} + \frac{1-\nu_1}4 \left( \ell * \|g_\lambda(\cdot)\|_H^2 \right)(t)
}\nonumber\\
&\leq a + \Big[ \ell * M \big( \underbrace{(1/2) \|u_\lambda(t) - u_0\|_H^2 + \varphi^1(u_\lambda(\cdot))}_{= \, \phi(\cdot)} \big) \Big](t)\label{ei3}
\end{align}
for a.e.~$t \in (0,T)$. Here $a$ and $M$ are given as
\begin{align*}
a &:= \left( \|\ell\|_{L^1(0,T)} + 1 \right) \varphi^1(u_0) + (C_1 + C_2) \left\| \ell * \| f(\cdot) \|_H^2 \right\|_{L^\infty(0,T)},\\
M(s) &:= C_2 s + (C_1 + C_2) M_1\left(s + \|u_0\|_H + 1\right)^2 \quad \mbox{ for }\ s \in \R.
\end{align*}
Hence applying Lemma \ref{L:Gron1}, we can take a constant $T_0 \in (0,T]$ such that
$$
\esssup_{t \in (0,T_0)} \left[ (1/2)\|u_\lambda(t) - u_0\|_H^2 + \varphi^1(u_\lambda(t)) \right] \leq C.
$$
Here and henceforth, $C$ denotes a generic constant being independent of $t$ and $\lambda$. We also note that $T_0$ depends only on $\ell$ and $M(a+1)$. Moreover, since $u_\lambda$ belongs to $C([0,T];H)$ and $\varphi^1$ is lower-semicontinuous in $H$, one can further verify that
\begin{equation}\label{est1}
\sup_{t \in [0,T_0]} \left[ (1/2)\|u_\lambda(t) - u_0\|_H^2 + \varphi^1(u_\lambda(t)) \right] \leq C.
\end{equation}
Recalling \eqref{ei3}, we also get
$$
\esssup_{t \in (0,T_0)} \left( \ell * \|g_\lambda(\cdot)\|_H^2 \right)(t) \leq C,
$$
which along with (K) yields
\begin{equation*}
\int^{T_0}_0 \|g_\lambda(t)\|_H^2 \, \d t \leq C.
\end{equation*}
Moreover, we deduce from (A2) and \eqref{est1} that
\begin{equation*}
\esssup_{t \in (0,T_0)} \left( \ell * \left\|\partial \varphi^2_\lambda (u_\lambda(\cdot)) \right\|_H^2 \right)(t) \leq C,
\end{equation*}
which leads us to obtain
\begin{equation}\label{est3}
\int^{T_0}_0 \left\|\partial \varphi^2_\lambda (u_\lambda(t)) \right\|_H^2 \, \d t \leq C.
\end{equation}
It further follows from \eqref{ee-aprx} that
\begin{equation}\label{est4-}
\esssup_{t \in (0,T_0)} \left( \ell * \left\| \dfrac{\d}{\d t} [k * (u_\lambda-u_0)] (\cdot) \right\|_H^2 \right)(t) \leq C,
\end{equation}
which also gives
\begin{equation*}
\int^{T_0}_0 \left\| \dfrac{\d}{\d t} [k * (u_\lambda-u_0)] (t) \right\|_H^2 \, \d t \leq C.
\end{equation*}

Furthermore, noting that
\begin{equation}\label{l*d-alpha}
\left( \ell * \dfrac{\d}{\d t} [k * (u_\lambda - u_0) ] \right)(t)
= u_\lambda(t) - u_{0} \quad \mbox{ for } \ t \in (0,T),
\end{equation}
we deduce from Lemma \ref{L:conrep} along with \eqref{est4-} that
$$
\sup_{\lambda \in (0,1)} \sup_{t \in [0,T_0-h]} \left\| u_\lambda(t+h) - u_\lambda(t) \right\|_H \to 0
$$
as $h \to 0_+$. On the other hand, it follows from \eqref{est1} and (A1) that $(u_\lambda(t))_{\lambda \in (0,1)}$ is precompact in $H$ for each $t \in [0,T_0]$.

Combining all these facts and using Ascoli's compactness lemma (see, e.g.,~\cite[Lemma 1]{Simon}), one can take a sequence $(\lambda_n)$ in $(0,1)$ converging to $0$ such that
\begin{alignat}{4}
u_{\lambda_n} &\to u \quad &&\mbox{ strongly in } C([0,T_0];H), \label{c1}\\
g_{\lambda_n} &\to \xi \quad &&\mbox{ weakly in } L^2(0,T_0;H),\label{c2}\\
\partial \varphi^2_{\lambda_n}(u_{\lambda_n}(\cdot)) &\to \eta \quad &&\mbox{ weakly in } L^2(0,T_0;H),\nonumber\\
(\d/\d t) [k*(u_{\lambda_n} - u_0)] &\to \zeta &&\mbox{ weakly in } L^2(0,T_0;H) \nonumber
\end{alignat}
for some $u \in C([0,T_0];H)$ and $\xi, \eta, \zeta \in L^2(0,T_0;H)$. Moreover, \eqref{c1} along with $u_\lambda(0)=u_0$ also yields $u(0) = u_0$. Recall that the map $\partial \Phi^1: u \mapsto \partial \varphi^1(u(\cdot))$ is maximal monotone in $L^2(0,T_0;H)$ (see~\cite[p.\,25]{HB1}) and every maximal monotone operator is demiclosed (see \S \ref{Ss:subdif}). Since $(u_\lambda,g_\lambda)$ lies on the graph $G(\partial \Phi_1)$ of $\partial \Phi_1$ (equivalently, $(u_\lambda(t),g_\lambda(t)) \in G(\partial \varphi^1)$ for a.e.~$t \in (0,T_0)$), we find from \eqref{c1} and \eqref{c2} that $(u,\xi) \in D(\partial \Phi_1)$, that is,
$$
u(t) \in D(\partial \varphi^1) \quad \mbox{ and } \quad \xi(t) \in \partial \varphi^1(u(t))
$$
for a.e.~$t \in (0,T_0)$. Furthermore, let $J_\lambda : H \to H$ be the resolvent of $\partial \varphi^2$, that is, $J_\lambda = (1 + \lambda \partial \varphi^2)^{-1}$, for $\lambda \in (0,1)$. Note that
$$
\|u_\lambda(t) - J_\lambda u_\lambda(t)\|_H = \lambda \left\| \partial \varphi^2_\lambda(u_\lambda(t)) \right\|_H
$$
for all $t \in [0,T]$. It follows from \eqref{est3} that
$$
u_\lambda - J_\lambda u_\lambda \to 0 \quad \mbox{ strongly in } L^2(0,T_0;H) \ \mbox{ as } \ \lambda \to 0_+,
$$
which along with \eqref{c1} implies
$$
J_{\lambda_n} u_{\lambda_n} \to u \quad \mbox{ strongly in } L^2(0,T_0;H).
$$
Recalling that $(J_\lambda u_\lambda(t), \partial \varphi^2_\lambda(u_\lambda(t)))$ lies on the graph of $\partial \varphi^2$ for a.e.~$t \in (0,T_0)$, we can similarly conclude that
$$
u(t) \in D(\partial \varphi^2) \quad \mbox{ and } \quad \eta(t) \in \partial \varphi^2(u(t))
$$
for a.e.~$t \in (0,T_0)$. Moreover, due to the linear maximal monotonicity (in particular, weak closedness) of $\B$ in $L^2(0,T_0;H)$, we further obtain
$$
u - u_0 \in D(\B) \quad \mbox{ and } \quad \zeta = \B(u-u_0) = \frac{\d}{\d t}[k*(u-u_0)],
$$
the former of which yields the initial condition, that is, $k*(u - u_0) \in W^{1,2}(0,T_0;H)$ and $[k*(u-u_0)](0) = 0$. Finally, from \eqref{est1} along with the lower-semicontinuity of $\varphi^1$ on $H$, one can verify that
$$
\varphi^1(u(\cdot)) \in L^\infty(0,T_0).
$$
This completes the proof.
\end{proof}

\section{Proofs of Theorem \ref{T:SDGE} and Corollary \ref{C:SDGEc}}\label{S:SDGE}

This section is devoted to proving Theorem \ref{T:SDGE} and Corollary \ref{C:SDGEc}. To this end, we first set up the following lemma, which is a variant of Lemma \ref{L:Gron1}.

\begin{lemma}\label{L:Gron2}
Let $S \in (0,\infty)$ and let $\phi \in L^{\infty}(0,S)$ be a nonnegative function. Suppose that there exist constants $\delta > {b} \geq 0$, a nonnegative function $g \in L^1(0,S)$ and a locally bounded Borel measurable function $N : [0,\infty) \to \R$ satisfying
\begin{equation}\label{hyp-M}
N(r) \leq 0 \quad \text{ for } \ r \in [0,\delta]
\end{equation}
such that 
$$
\phi(t) \leq {b} + [g * N(\phi(\cdot))](t) \quad \text{ for a.e.~}\ t \in (0,S).
$$
Then $\|\phi\|_{L^{\infty}(0,S)} \leq {b}$.
\end{lemma}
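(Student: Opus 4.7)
The plan is to adapt the threshold/continuation argument used for Lemma \ref{L:Gron1}, but to exploit the sign condition \eqref{hyp-M} in place of any smallness of $\|g\|_{L^1}$. Set $K := \|\phi\|_{L^\infty(0,S)}$ and $K_N := \sup_{r \in [0,K]} |N(r)|$, which is finite by the local boundedness of $N$, so that $N(\phi(\cdot)) \in L^\infty(0,S)$ with $\|N(\phi(\cdot))\|_{L^\infty(0,S)} \leq K_N$.

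I would introduce
$$
T_\phi := \sup \bigl\{ t \in [0,S] \colon \phi(s) \leq \delta \text{ for a.e.~} s \in (0,t) \bigr\}
$$
and prove that $T_\phi = S$. For $T_\phi > 0$, note that
$$
\bigl| [g * N(\phi(\cdot))](t) \bigr| \leq K_N \int_0^t g(\sigma)\,\d\sigma
$$
vanishes as $t \to 0_+$ by the absolute continuity of the integral of $g \in L^1(0,S)$; since $\delta > b$, this forces $\phi(t) \leq b + (\delta - b) = \delta$ a.e.~on a neighbourhood of $0$.

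Next, I would show $T_\phi = S$ by contradiction. Suppose $T_\phi < S$. By the definition of $T_\phi$, one has $\phi(s) \leq \delta$ a.e.~on $(0,T_\phi)$, whence \eqref{hyp-M} yields $N(\phi(s)) \leq 0$ there. For $t \in (T_\phi, S)$, I would split the convolution at $T_\phi$:
$$
[g * N(\phi(\cdot))](t) = \underbrace{\int_0^{T_\phi} g(t-s) N(\phi(s))\,\d s}_{\leq 0 \text{ (since } g \geq 0)} + \int_{T_\phi}^t g(t-s) N(\phi(s))\,\d s \leq K_N \int_0^{t-T_\phi} g(\sigma)\,\d\sigma.
$$
The last quantity is strictly less than $\delta - b$ whenever $t - T_\phi$ is sufficiently small, so $\phi(t) \leq b + (\delta - b) = \delta$ a.e.~on $(0, T_\phi + \tau')$ for some $\tau' > 0$, contradicting the definition of $T_\phi$. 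Hence $\phi \leq \delta$ a.e.~on $(0,S)$, so \eqref{hyp-M} gives $N(\phi(\cdot)) \leq 0$ a.e., hence $[g * N(\phi(\cdot))](t) \leq 0$ a.e., and therefore $\phi(t) \leq b$ a.e., i.e.~$\|\phi\|_{L^\infty(0,S)} \leq b$.

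The main obstacle is the continuation step, where one must rule out any ``first time'' $\phi$ can exceed $\delta$. The point that unlocks it is precisely the sign condition on $N$: it makes the cumulative history $\int_0^{T_\phi} g(t-s) N(\phi(s))\,\d s$ nonpositive, so the only contribution to $g * N(\phi)$ near $T_\phi^+$ comes from the short tail $(T_\phi,t)$, which is controlled by absolute continuity of the $L^1$-integral of $g$. Unlike in Lemma \ref{L:Gron1}, no smallness of $\|g\|_{L^1}$ is needed because $N(\phi(\cdot))$ carries its own sign.
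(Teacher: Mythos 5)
Your proof is correct and follows essentially the same continuation argument as the paper: split the convolution at the first time $\phi$ could cross a barrier, use \eqref{hyp-M} to make the history integral nonpositive, and control the short tail by absolute continuity of $\int g$. The only (cosmetic) difference is that you place the barrier at $\delta$ and then drop to $b$ in a final pass, whereas the paper places it at $b+2\epsilon$ and lets $\epsilon \to 0_+$; both are sound.
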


\begin{proof}
Fix $\epsilon >0$ satisfying $0 < \epsilon < (\delta-{b})/2$ (hence ${b} + 2 \epsilon < \delta$). As in the proof of Lemma \ref{L:Gron1}, we set 
$$
T_{\phi,\epsilon} := \sup\{s \in (0,S) : \|\phi\|_{L^{\infty}(0,s)} \leq {b} + 2\epsilon\} > 0.
$$
From the arbitrariness of $\epsilon>0$, it suffices to show that $T_{\phi,\epsilon} = S$. Suppose to the contrary that $T_{\phi,\epsilon} < S$. Then there exists $t_{\phi,\epsilon} \in (T_{\phi,\epsilon},S)$ such that 
\begin{gather*}
{b}+2 \epsilon < \phi(t_{\phi,\epsilon}) \leq {b} + [g * N(\phi(\cdot))](t_{\phi,\epsilon}),\\
\|N\|_{L^{\infty}(0,R_{\phi})} \int_{0} ^{t_{\phi,\epsilon} - T_{\phi,\epsilon}} g(t) \, \d t < \epsilon, 
\end{gather*}
where $R_{\phi} := \|\phi\|_{L^{\infty}(0,S)}$. As $\phi(t) \leq \delta$ for a.e.~$t \in (0,T_{\phi,\epsilon})$, we find from \eqref{hyp-M} that 
\begin{align*}
{b}+2 \epsilon < \phi(t_{\phi,\epsilon}) 
&\leq {b} + [g * N(\phi(\cdot))](t_{\phi,\epsilon})\\
&= {b} + \int_{0}^{t_{\phi,\epsilon}} g(t_{\phi,\epsilon}-t) N(\phi(t)) \, \d t
\\
&\leq {b} + \int_{T_{\phi,\epsilon}}^{t_{\phi,\epsilon}} g(t_{\phi,\epsilon}-t) N(\phi(t)) \, \d t\\
&\leq {b} + \|N\|_{L^{\infty}(0,R_{\phi})} \int_{0}^{t_{\phi,\epsilon} - T_{\phi,\epsilon}} g(t) \, \d t
< {b} + \epsilon,
\end{align*}
which yields a contradiction. Thus we obtain $S = T_{\phi,\epsilon}$. The desired conclusion follows from the arbitrariness of $\epsilon > 0$. 
\end{proof}

We are in a position to prove Theorem \ref{T:SDGE}.

\begin{proof}[Proof of Theorem {\rm \ref{T:SDGE}}]
Let $T \in (0,\infty)$ be fixed. Set
$$
E_T(u_{0},f) := \varphi^1(u_{0}) + \left\| \ell*\|f(\cdot)\|_{H}^{2} \right\|_{L^\infty(0,T)}
$$ 
and recall the approximate problems \eqref{ee-aprx}, which admit strong solutions $u_\lambda$ on $[0,T]$ such that $u_\lambda \in C([0,T];H)$, $u_\lambda(0) = u_0$ and $\varphi^1(u_\lambda(\cdot)) \in L^\infty(0,T)$. Moreover, let $g_\lambda \in L^2(0,T;H)$ be such that $g_\lambda(t) \in \partial \varphi^1(u_\lambda(t))$ and \eqref{ee-lam} holds for a.e.~$t \in (0,T)$.

From (iv) of Proposition \ref{P:J-Y} and ${\rm (A2)}'$, we have
\begin{align}\label{dphi2-2}
\| \partial \varphi^2_\lambda(u_\lambda(t)) \|_{H}
\leq \|\mathring{ \partial \varphi^2}(u_\lambda(t)) \|_{H}
\leq {\nu_2} \|g_\lambda(t)\|_{H} + M_{2} ( \varphi^1 (u_\lambda(t)) )
\end{align}
for a.e.~$t \in (0,T)$. By the use of \eqref{dphi2-2} instead of \eqref{dphi2}, testing \eqref{ee-lam} by $g_\lambda(t)$ and convolving it with $\ell$, as in the proof of Theorem \ref{T:LE}, we can derive
\begin{align*}
\lefteqn{
\varphi^1(u_\lambda(t)) - \varphi^1(u_0) + \frac{1-{\nu_2}}2 \left( \ell * \|g_\lambda(\cdot)\|_H^2 \right)(t)
}\\
&\leq C_1 \left[ \ell * M_2 \left( \varphi^1(u_\lambda(\cdot)) \right)^2 \right](t) + C_1 \left\| \ell * \| f(\cdot) \|_H^2 \right\|_{L^\infty(0,T)}
\end{align*}
for a.e.~$t \in (0,T)$ and a constant $C_1 = C_1(\nu_2) \geq 0$ (see \eqref{ei1}). Thus one can take a constant $C_3 = C_3(\nu_2) \geq 1$ such that
\begin{align*}
\lefteqn{
\varphi^1(u_\lambda(t)) + \frac{1-{\nu_2}}2 \left( \ell * \|g_\lambda(\cdot)\|_H^2 \right)(t)
}\\
&\leq C_3 E_T(u_0,f) + C_3 \left[ \ell * M_2 \left( \varphi^1(u_\lambda(\cdot)) \right)^2 \right](t)
\end{align*}
for a.e.~$t \in (0,T)$. Here we note from \eqref{A2p-1} that
$$
\|g_\lambda(t)\|_H \geq m_2(\varphi^1(u_\lambda(t)))
$$
for a.e.~$t \in (0,T)$. Hence it follows that
\begin{align}
\lefteqn{
\varphi^1(u_\lambda(t)) + \frac{1-{\nu_2}}4 \left( \ell * \|g_\lambda(\cdot)\|_H^2 \right)(t)
} \nonumber\\
&\leq C_3 E_T(u_0,f) + \left[ \ell * N\left( \varphi^1(u_\lambda(\cdot)) \right) \right](t) \quad \mbox{ for a.e. } \ t \in (0,T), 
\label{sd:ei1}
\end{align}
where $N$ is a function given by
$$
N(r) := C_3 M_2(r)^2 - \frac{1-{\nu_2}}4 m_2(r)^2 \quad \mbox{ for } \ r \geq 0.
$$
Here due to \eqref{M2-growth} one can take a constant $\delta_0 > 0$ small enough depending only on $\nu_2$, $m_2(\cdot)$ and $M_2(\cdot)$ that $N(r) \leq 0$ for any $r \in [0,\delta_0]$. Thanks to Lemma \ref{L:Gron2}, we obtain
\begin{equation}\label{sd:e1}
\sup_{t \in [0,T]} \varphi^1(u_\lambda(t)) \leq C_3 E_T(u_0,f),
\end{equation}
provided that $C_3 E_T(u_0,f) < \delta_0$ (cf.~see~\eqref{est1}). Furthermore, it follows from \eqref{sd:ei1} that
\begin{equation}\label{sd:e:g}
\esssup_{t \in (0,T)} \left( \ell * \|g_\lambda(\cdot)\|_H^2 \right)(t) \leq C,
\end{equation}
which in particular implies
\begin{equation*}
\int^T_0 \|g_\lambda(t)\|_H^2 \, \d t \leq C.
\end{equation*}
Here and henceforth, $C$ denotes a generic constant being independent of $t$ and $\lambda$. Hence combining \eqref{A2p-2} with \eqref{sd:e1} and \eqref{sd:e:g}, we deduce that
\begin{equation*}
\esssup_{t \in (0,T)} \left( \ell * \| \partial \varphi^2_\lambda(u_\lambda(\cdot)) \|_H^2 \right)(t) \leq C,
\end{equation*}
which also yields
\begin{equation*}
\int^T_0 \left\| \partial \varphi^2_\lambda(u_\lambda(t)) \right\|_H^2 \, \d t \leq C,
\end{equation*}
and hence, \eqref{ee-lam} gives
\begin{equation*}
\esssup_{t \in (0,T)} \left( \ell * \left\| \dfrac{\d}{\d t}[ k*(u_\lambda - u_0)](\cdot) \right\|_H^2 \right)(t) \leq C.
\end{equation*}
Due to \eqref{sd:e1} and ${\rm (A1)}'$, one can also deduce that $(u_\lambda(t))_{\lambda \in (0,1)}$ is precompact in $H$ for each $t \in [0,T]$. Hence repeating the same argument as in the proof of Theorem \ref{T:LE}, we can conclude that
$$
u_{\lambda_n} \to u \quad \mbox{ strongly in } C([0,T];H)
$$
for some sequence $\lambda_n \to 0_+$ and $u \in C([0,T];H)$ complying with $u(0) = u_0$, and moreover, the limit $u$ turns out to be a strong solution on $[0,T]$ to \eqref{ee}. Furthermore, \eqref{sd:e1} and the lower-semicontinuity of $\varphi^1$ yield
$$
\varphi^1(u(t)) \leq C_3 E_T(u_0,f)
$$
for all $t \in [0,T]$.

Finally, we show existence of strong solutions on $[0,\infty)$ to the Cauchy problem \eqref{ee} with $T = \infty$ for $u_0 \in D(\varphi^1)$ and $f \in L^2_{\rm loc}([0,\infty);H)$ satisfying $C_3 E_\infty(u_0,f) < \delta_0$. Thanks to Theorem \ref{T:Lip}, for each $\lambda > 0$ the approximate problem \eqref{ee-aprx} with $T = \infty$ admits a unique strong solution $u_\lambda \in L^2_{\rm loc}([0,\infty);H)$ on $[0,\infty)$ satisfying \eqref{regu-Lip} for any $T > 0$. From the arguments so far, for each $N \in \N$, one can inductively take a subsequence $(\lambda^{(N)}_n)_{n \in \N}$ of $(\lambda^{(N-1)}_n)_{n \in \N}$ such that 
$$
u_{\lambda^{(N)}_n} \to u^{(N)} \quad \mbox{ strongly in } C([0,N];H)
$$
for some strong solution $u^{(N)} \in C([0,N];H)$ on $[0,N]$ to \eqref{ee} with $T = N$ and $u^{(N)}(0) = u_0$. Then one observes by definition that the restriction $u^{(N)}|_{[0,N-1]}$ coincides with $u^{(N-1)}$. Due to a diagonal argument, one can take a (not relabeled) subsequence of $(\lambda^{(n)}_n)_{n \in \N}$ such that
$$
u_{\lambda^{(n)}_n} \to u \quad \mbox{ strongly in } C([0,T];H) \ \mbox{ for any } \ T > 0
$$
for some $u \in C([0,\infty);H)$ satisfying $u(0) = u_0$. Moreover, recalling the relation $g_\lambda := f - (\d/\d t)[k*(u_\lambda-u_0)] + \partial \varphi^2_\lambda(u_\lambda(\cdot))$ (see \eqref{ee-lam} with $T = \infty$) along with the uniqueness of $u_\lambda$, one can construct $\xi,\eta \in L^2_{\rm loc}([0,\infty);H)$ satisfying \eqref{EQ} for a.e.~$t \in (0,\infty)$. Indeed, for each $N \in \N$, one can similarly construct $\xi^{(N)}, \eta^{(N)} \in L^2(0,N;H)$ such that \eqref{EQ} holds for a.e.~$t \in (0,N)$, $\xi^{(N)}|_{[0,N-1]} = \xi^{(N-1)}$ and $\eta^{(N)}|_{[0,N-1]} = \eta^{(N-1)}$. Define a strongly measurable function $\xi : (0,\infty) \to H$ as a limit
$$
\xi(t) := \lim_{N \to \infty} \bar{\xi}^{(N)}(t) \quad \mbox{ for a.e.~} \ t \in (0,\infty),
$$
where $\bar{\xi}^{(N)} \in L^2(0,\infty;H)$ is a function given as
$$
\bar{\xi}^{(N)}(t) = \begin{cases}
		\xi^{(N)}(t) &\mbox{ if } \ t \in [0,N],\\
		0 &\mbox{ if } \ t \in (N,\infty)
	       \end{cases}
\quad \mbox{ for } \ t \in [0,\infty).
$$
Then we observe that $\xi|_{[0,N]} = \xi^{(N)}$ for $N \in \N$. Hence $\xi \in L^2_{\rm loc}([0,\infty);H)$. By a similar argument, we can also construct $\eta \in L^2_{\rm loc}([0,\infty);H)$ such that  $\eta|_{[0,N]} = \eta^{(N)}$ for $N \in \N$. Therefore $u$ is a strong solution to \eqref{ee} on $[0,\infty)$ in the sense of Definition \ref{D:sol}, since $u|_{[0,N]} = u^{(N)}$ for each $N$. This completes the proof.
\end{proof}

We now move on to a proof of Corollary \ref{C:SDGEc}.

\begin{proof}[Proof of Corollary {\rm \ref{C:SDGEc}}]
Let $T \in (0,\infty)$ be fixed.  
Due to \eqref{M2-growth-c}, one can take constants $\delta_0, \epsilon > 0$ small enough depending only on $M_3(\cdot)$ that 
$$
0 < M_3(r+\epsilon) \leq \frac 12 \quad \mbox{ for } \ r \in [0,\delta_0].
$$
Testing \eqref{ee-lam} by $g_\lambda(t)$, using Young's inequality, and convolving it with $\ell$, one can deduce that
\begin{align*}
\lefteqn{
\varphi^1(u_\lambda(t)) - \varphi^1(u_0) + \frac 12 \left( \ell * \|g_\lambda(\cdot)\|_H^2 \right)(t)
}\\
&\leq \left( \ell * \|\partial \varphi^2_\lambda(u_\lambda(\cdot))\|_H^2 \right)(t) + \left\| \ell * \| f(\cdot) \|_H^2 \right\|_{L^\infty(0,T)}
\end{align*}
for a.e.~$t \in (0,T)$. From (iv) of Proposition \ref{P:J-Y} and \eqref{A2p-2c} of ${\rm (A2)}'_c$, we have
$$
\| \partial \varphi^2_\lambda(u_\lambda(t)) \|_{H}
\leq \|\mathring{ \partial \varphi^2}(u_\lambda(t)) \|_{H}
\leq \|g_\lambda(t)\|_{H} M_3 \big( \varphi^1 (u_\lambda(t)) \big),
$$
which yields
\begin{align}\label{dphi2-2c}
\| \partial \varphi^2_\lambda(u_\lambda(t)) \|_{H} M_3 \big( \varphi^1 (u_\lambda(t)) + \epsilon \big)^{-1}
\leq \|g_\lambda(t)\|_{H}
\end{align}
for a.e.~$t \in (0,T)$. Using \eqref{dphi2-2c}, we obtain
\begin{align*}
\lefteqn{
\varphi^1(u_\lambda(t)) + \frac 14 \left( \ell * \|g_\lambda(\cdot)\|_H^2 \right)(t)
}\\
& \leq E_T(u_0,f) + \left[ \ell * N\left( \varphi^1(u_\lambda(\cdot)) \right) \right](t)
\end{align*}
for a.e.~$t \in (0,T)$. Here $N$ is a function given by
$$
N(r) := \begin{cases}
	 0 &\mbox{ if } \ 1 - (1/4) M_3(r+\epsilon)^{-2} \leq 0,\\
	 \|\partial \varphi^2_\lambda(u_\lambda(\cdot))\|_{L^\infty(0,T;H)}^2  &\mbox{ if } \ 1 - (1/4) M_3(r+\epsilon)^{-2} > 0
	\end{cases}
$$
for $r \geq 0$. Then we note that $N(r) \leq 0$ for any $r \in [0,\delta_0]$. Thanks to Lemma \ref{L:Gron2}, we obtain
\begin{equation*}
\esssup_{t \in (0,T)} \varphi^1(u_\lambda(t)) \leq E_T(u_0,f),
\end{equation*}
provided that $E_T(u_0,f) < \delta_0$. The rest of the proof runs as before.
\end{proof}

\section{Proof of Theorem \ref{T:GE}}\label{S:GE}

In this section, we prove Theorem \ref{T:GE}. 

\begin{proof}[Proof of Theorem {\rm \ref{T:GE}}] 
Let $T \in (0,\infty)$ be fixed. In this proof, we consider the following (slightly different) approximate problems:
\begin{equation}
\left.
\begin{aligned}
\lambda \frac{\d u_\lambda}{\d t}(t) + \frac{\d}{\d t} \left[ k * (u_\lambda - u_0) \right] (t) + \partial \varphi^1(u_\lambda(t)) - \partial \varphi^2_\lambda(u_\lambda(t)) &\ni f(t),\\
0 < t &< T,\\
u_\lambda(0) &= u_0,
\end{aligned}
\right\} \label{ee-aprx2}
\end{equation}
where $\varphi^2_\lambda$ stands for the Moreau--Yosida regularization of $\varphi^2$, for $\lambda \in (0,1)$. For $u_0 \in D(\varphi^1)$, $f \in W^{1,2}(0,T;H)$ and $\lambda \in (0,1)$, one can prove existence of a unique strong solution $u_\lambda \in W^{1,2}(0,T;H)$ on $[0,T]$ to the Cauchy problem \eqref{ee-aprx} as in~\cite{A19} (see \S \ref{Ss:aprx2} in Appendix). Testing \eqref{ee-aprx2} by $(\d u_\lambda/\d t)(t)$ and employing Proposition \ref{P:std_chain}, we have
\begin{align*}
\lefteqn{
\lambda \left\| \dfrac{\d u_\lambda}{\d t}(t) \right\|_H^2 + \left( \dfrac{\d}{\d t} \left[ k* (u_\lambda - u_0) \right](t), \dfrac{\d u_\lambda}{\d t}(t) \right)_H 
}\\
&\quad + \dfrac{\d}{\d t} \left[ \varphi^1(u_\lambda(t)) - \varphi^2_\lambda(u_\lambda(t)) \right]\\
&= \left(f(t), \dfrac{\d u_\lambda}{\d t}(t) \right)_H \\
&= \dfrac{\d}{\d t} (f(t), u_\lambda(t))_H - \left( \dfrac{\d f}{\d t}(t), u_\lambda(t) \right)_H
\end{align*}
for a.e.~$t \in (0,T)$. Note that
\begin{align*}
\lefteqn{
\int^t_0 \left( \dfrac{\d}{\d t} \left[ k* (u_\lambda - u_0) \right](\tau), \dfrac{\d u_\lambda}{\d t}(\tau) \right)_H \, \d \tau}\\
&\geq \dfrac 12 \left( \ell * \left\| \dfrac{\d}{\d t} \left[ k* (u_\lambda - u_0) \right] (\cdot) \right\|_H^2 \right)(t)
\end{align*}
for all $t \in [0,T]$ (see Proposition \ref{P:AB}). Integrating both sides over $(0,t)$, we find that
\begin{align*}
\lefteqn{
\lambda \int^t_0 \left\| \dfrac{\d u_\lambda}{\d t}(\tau) \right\|_H^2 \, \d \tau + \dfrac 12 \left( \ell * \left\| \dfrac{\d}{\d t} \left[ k* (u_\lambda - u_0) \right](\cdot) \right\|_H^2 \right)(t)
}\\
&\quad + \varphi^1(u_\lambda(t)) - \varphi^2_\lambda(u_\lambda(t))\\
&\leq \varphi^1(u_0) - \varphi^2_\lambda(u_0) + (f(t), u_\lambda(t))_H - (f(0), u_0)_H \\
&\quad - \int^t_0 \left( \dfrac{\d f}{\d t}(\tau), u_\lambda(\tau) \right)_H \, \d \tau
\end{align*}
for $t \in [0,T]$. Using (A3), we observe that
\begin{align*}
\varphi^1(u_\lambda(t)) - \varphi^2_\lambda(u_\lambda(t))
&\geq \varphi^1(u_\lambda(t)) - \varphi^2(u_\lambda(t))\\
&\geq (1-{\nu_3}) \varphi^1(u_\lambda(t)) - c_1(\|u_\lambda(t)\|_H^r + 1)
\end{align*}
for $t \in [0,T]$. Moreover, exploiting Jensen's inequality, we note that
\begin{align*}
\lefteqn{
\left( \ell * \left\| \dfrac{\d}{\d t} \left[ k* (u_\lambda - u_0) \right](\cdot) \right\|_H^2 \right)(t)
}\\
&= \int^t_0 \ell(t-s) \left\| \dfrac{\d}{\d t} \left[ k* (u_\lambda - u_0) \right](s) \right\|_H^2 \, \d s\\
&\geq \|\ell\|_{L^1(0,T)}^{-1} \left\| \left( \ell * \dfrac{\d}{\d t} \left[ k* (u_\lambda - u_0) \right] \right)(t) \right\|_H^2\\
&\hspace{-2.2mm}\stackrel{\eqref{l*d-alpha}}= \|\ell\|_{L^1(0,T)}^{-1} \|u_\lambda(t)-u_0\|_H^2
\end{align*}
for $t \in [0,T]$. Thus we obtain
\begin{align*}
\lefteqn{
\lambda \int^t_0 \left\| \dfrac{\d u_\lambda}{\d t}(\tau) \right\|_H^2 \, \d \tau + \dfrac 14 \left( \ell * \left\| \dfrac{\d}{\d t} \left[ k* (u_\lambda - u_0) \right](\cdot) \right\|_H^2 \right)(t)
}\\
&\quad + \frac 14 \|\ell\|_{L^1(0,T)}^{-1} \|u_\lambda(t)-u_0\|_H^2 + (1-{\nu_3})\varphi^1(u_\lambda(t))\\
&\leq \varphi^1(u_0) - \varphi^2_\lambda(u_0) + c_1(\|u_\lambda(t)\|_H^r + 1) + (f(t), u_\lambda(t))_H - (f(0), u_0)_H \\
&\quad - \int^t_0 \left( \dfrac{\d f}{\d t}(\tau), u_\lambda(\tau) \right)_H \, \d \tau
\end{align*}
for $t \in [0,T]$. Therefore exploiting Young's inequality, one finds that
\begin{align*}
\lefteqn{
\lambda \int^t_0 \left\| \dfrac{\d u_\lambda}{\d t}(\tau) \right\|_H^2 \, \d \tau + \dfrac 14 \left( \ell * \left\| \dfrac{\d}{\d t} \left[ k* (u_\lambda - u_0) \right](\cdot) \right\|_H^2 \right)(t)
}\\
&\quad + \frac 18 \|\ell\|_{L^1(0,T)}^{-1} \|u_\lambda(t)-u_0\|_H^2 + (1-{\nu_3})\varphi^1(u_\lambda(t))\\
&\leq \varphi^1(u_0) - \varphi^2_\lambda(u_0) + C\left( \|f(t)\|_H^2 + \|f(0)\|_H^2 + \|u_0\|_H^2 + 1 \right)\\
&\quad + \int^t_0 \left\|\dfrac{\d f}{\d t}(\tau)\right\|_H \|u_\lambda(\tau)\|_H \, \d \tau
\end{align*}
for $t \in [0,T]$. Here and henceforth, $C$ denotes a generic constant being independent of $t$ and $\lambda$. Since $\varphi^2_\lambda(u_0)$ is bounded as $\lambda \to 0_+$, using Gronwall's inequality, we conclude that
\begin{align}
\sup_{t \in [0,T]} \left( \varphi^1(u_\lambda(t)) + \|u_\lambda(t)\|_H^2 \right) &\leq C,\label{ge:e1}\\
\lambda \int^T_0 \left\| \dfrac{\d u_\lambda}{\d t}(\tau) \right\|_H^2 \, \d \tau &\leq C,\nonumber\\
\sup_{t \in [0,T]} \left( \ell * \left\| \dfrac{\d}{\d t} \left[ k* (u_\lambda - u_0) \right](\cdot) \right\|_H^2 \right)(t) &\leq C \label{ge:e1.5}
\end{align}
for $\lambda \in (0,1)$. Furthermore, since $\varphi^1$ is affinely bounded from below, we infer that
$$
\sup_{t \in [0,T]} |\varphi^1(u_\lambda(t))| \leq C.
$$
Hence as in the proof of Theorem \ref{T:LE}, we can deduce from \eqref{ge:e1.5} along with Lemma \ref{L:conrep}, (A1) and Ascoli's theorem that
$$
u_{\lambda_n} \to u \quad \mbox{ strongly in } C([0,T];H)
$$
for some sequence $\lambda_n \to 0_+$, and therefore, we have $u(0) = u_0$. Moreover, the convolution of \eqref{ge:e1.5} with $k$ also yields
\begin{equation*}
 \int^T_0 \left\| \dfrac{\d}{\d t} \left[ k*(u_\lambda-u_0)\right](t)\right\|_H^2 \, \d t \leq C.
\end{equation*}
Furthermore, it follows from (A2) and \eqref{ge:e1} that
\begin{equation}\label{dphi2-ge}
\left\|\partial \varphi^2_\lambda (u_\lambda(t))\right\|_H
 \leq \nu_1 \|g_\lambda(t)\|_H + C
\end{equation}
for a.e.~$t \in (0,T)$. Testing \eqref{ee-aprx2} by $g_\lambda(t)$, one finds that
\begin{align*}
\lefteqn{
\lambda \dfrac{\d}{\d t} \varphi^1(u_\lambda(t)) + \left( \dfrac{\d}{\d t} \left[ k* (u_\lambda - u_0) \right](t), g_\lambda(t) \right)_H + \|g_\lambda(t)\|_H^2
}\\
&\leq \left\|\partial \varphi^2_\lambda(u_\lambda(t)) \right\|_H \|g_\lambda(t)\|_H + \|f(t)\|_H \|g_\lambda(t)\|_H\\
&\leq \nu_1 \|g_\lambda(t)\|_H^2 + C \|g_\lambda(t)\|_H + \|f(t)\|_H \|g_\lambda(t)\|_H
\end{align*}
for a.e.~$t \in (0,T)$. With the aid of Young's inequality, we have
\begin{align*}
\lefteqn{
\lambda \dfrac{\d}{\d t} \varphi^1(u_\lambda(t)) + \left( \dfrac{\d}{\d t} \left[ k* (u_\lambda - u_0) \right](t), g_\lambda(t) \right)_H + \frac{1-\nu_1}2 \|g_\lambda(t)\|_H^2
}\\
&\leq C \left( \|f(t)\|_H^2 + 1 \right)
\end{align*}
for a.e.~$t \in (0,T)$. Integrating both sides over $(0,T)$ and using (i) of Proposition \ref{P:frac_chain}, we conclude that
\begin{align*}
\lefteqn{
\lambda \varphi^1(u_\lambda(t)) + \left[ k * \left( \varphi^1(u_\lambda(\cdot)) - \varphi^1(u_0)\right)\right](t) + \frac{1-\nu_1}2 \int^t_0 \|g_\lambda(t)\|_H^2 \, \d t
}\\
&\leq C \left( \int^T_0 \|f(t)\|_H^2 \, \d t + 1 \right) \quad \mbox{ for } \ t \in [0,T],
\end{align*}
which along with \eqref{dphi2-ge} implies
\begin{equation*}
 \int^T_0 \|g_\lambda(t)\|_H^2 \, \d t + \int^T_0 \left\|\partial \varphi^2_\lambda(u_\lambda(t))\right\|_H^2 \, \d t \leq C.
\end{equation*}
The rest of the proof runs as in \S \ref{S:SDGE}.
\end{proof}

\section{Applications}\label{S:app}

In this section, we apply the abstract theory developed so far in the present paper to the Cauchy--Dirichlet problem for $p$-Laplace subdiffusion equations with blow-up terms of the form,
\begin{alignat}{4}
\partial_t^\alpha (u-u_0) - \Delta_p u - |u|^{q-2}u &= f \ &&\mbox{ in } \ \Omega \times (0,\infty),\label{pde2}\\
u &= 0 &&\mbox{ on } \partial \Omega \times (0,\infty),\label{bc2}
\end{alignat}
where $0 < \alpha < 1$, $1 < p,q < \infty$, $\Omega$ is a bounded domain of $\R^d$ with smooth boundary $\partial \Omega$ and $u_0 = u_0(x)$ and $f = f(x,t)$ are prescribed. Moreover, $\partial_t^\alpha (u-u_0) := \partial_t [k_\alpha * (u-u_0)]$ denotes the Riemann--Liouville derivative of $u-u_0$ (i.e., Caputo derivative of $u$) of order $0 < \alpha < 1$ and $\Delta_p$ is the so-called \emph{$p$-Laplacian} given as
$$
\Delta_p u = \nabla \cdot \left(|\nabla u|^{p-2}\nabla u\right).
$$
Here and henceforth, we are concerned with $L^2$-solutions to \eqref{pde2}, \eqref{bc2} defined as follows:
\begin{definition}[$L^2$-solutions to \eqref{pde2}, \eqref{bc2}]\label{D:pde-sol}
For each $S \in (0,\infty)$, a function $u \in L^2(0,S;L^2(\Omega))$ is called an \emph{$L^2$-solution} on $[0,S]$ to the Cauchy--Dirichlet problem \eqref{pde2}, \eqref{bc2}, if the following conditions are all satisfied\/{\rm :}
\begin{enumerate}
 \item[\rm (i)] It holds that $k_{\alpha}*(u-u_0) \in W^{1,2}(0,S;L^2(\Omega))$, $[k_{\alpha}*(u-u_0)](0) = 0$ and $u(t) \in W^{1,p}_0(\Omega) \cap L^q(\Omega)$ for a.e.~$t \in (0,S)$.
 \item[\rm (ii)] It holds that $\Delta_p u, |u|^{q-2}u \in L^2(0,S;L^2(\Omega))$ and
$$
\dfrac{\d}{\d t} \left[ k_\alpha * (u - u_0) \right](\cdot,t) - \Delta_p u(\cdot,t) - |u|^{q-2}u(\cdot,t) = f(\cdot,t) \ \mbox{ in } L^2(\Omega)
$$
for a.e.~$t \in (0,S)$.
\end{enumerate}
Moreover, a function $u \in L^2_{\rm loc}([0,\infty);L^2(\Omega))$ is called an \emph{$L^2$-solution} on $[0,\infty)$ to the Cauchy--Dirichlet problem \eqref{pde2}, \eqref{bc2}, if the restriction $u|_{[0,S]}$ of $u$ is an $L^2$-solution on $[0,S]$ to \eqref{pde2}, \eqref{bc2} for every $S > 0$.
\end{definition}

We set $H = L^2(\Omega)$ and define $\varphi^1, \varphi^2 :H \to [0,\infty]$ by
\begin{align*}
 \varphi^1(w) &= \begin{cases}
		  \frac 1p \int_\Omega |\nabla w|^p \, \d x &\mbox{ if } \ w \in W^{1,p}_0(\Omega),\\
		  \infty &\mbox{ otherwise,}
		 \end{cases}\\
 \varphi^2(w) &= \begin{cases}
		  \frac 1q \int_\Omega |w|^q \, \d x &\mbox{ if } \ w \in L^q(\Omega),\\
		  \infty &\mbox{ otherwise}
		 \end{cases}
\end{align*}
for $w \in H = L^2(\Omega)$. Then $\varphi^1$ and $\varphi^2$ turn out to be proper, lower-semicontinuous and convex in $H$. Moreover, $\partial \varphi^1(w)$ and $\partial \varphi^2(w)$ coincide with $-\Delta_p w$ equipped with the homogeneous Dirichlet boundary condition in the distributional sense for $w \in D(\partial \varphi^1) = \{w \in W^{1,p}_0(\Omega) \cap L^2(\Omega) \colon \Delta_p w \in L^2(\Omega)\}$ and $|w|^{q-2}w$ for $w \in D(\partial \varphi^2) = \{w \in L^2(\Omega) \cap L^q(\Omega) \colon |w|^{q-2}w \in L^2(\Omega)\}$, respectively. Hence the Cauchy--Dirichlet problem \eqref{pde2}, \eqref{bc2} is reduced into the abstract Cauchy problem,
\begin{equation}\label{ee-pde}
\dfrac{\d}{\d t} \left[ k_{\alpha} * (u-u_0) \right](t) + \partial \varphi^1(u(t)) - \partial \varphi^2(u(t)) = f(t) \ \mbox{ in } H
\end{equation}
for $t \in (0,\infty)$. 

We now set up the following:
\begin{lemma}[Calder\'on--Zygmund estimates for $p$-Laplacian]\label{L:pCZ}
Let $\Omega$ be a bounded $C^{1,1}$ domain of $\R^d$ with boundary $\partial \Omega$. Let $p \in (1,\infty)$ be such that $p \geq 2d/(d+2)$. Let $r = 2^*(p-1)$ for $d \geq 3$ and $r \in (p,\infty)$ arbitrarily for $d \leq 2$. Then for each $f \in L^2(\Omega)$, the Dirichlet problem
\begin{equation}\label{p-ell-eq}
-\Delta_p u = f \ \mbox{ in } \Omega, \quad u = 0 \ \mbox{ on } \partial \Omega
\end{equation}
admits a unique weak solution $u \in W^{1,p}_0(\Omega)$ such that $\nabla u \in L^{r}(\Omega;\R^d)$. Moreover, there exists a constant $C \geq 0$ depending only on $d$, $p$ and $\partial \Omega$ such that 
$$
\|\nabla u\|_{L^{r}(\Omega;\R^d)}^{p-1} \leq C \|f\|_{L^2(\Omega)}
$$
for any $f \in L^2(\Omega)$. Here $u \in W^{1,p}_0(\Omega)$ denotes the unique weak solution to \eqref{p-ell-eq}.
\end{lemma}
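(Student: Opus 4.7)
The plan is to split the proof into two essentially independent parts: existence and uniqueness by monotone operator theory, and a gradient integrability estimate obtained from a nonlinear Calder\'on--Zygmund result.

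For the first part, the assumption $p \geq 2d/(d+2)$ is equivalent (for $d \geq 3$) to $(2^*)' \leq p$, and combined with the Sobolev embedding $W^{1,p}_0(\Omega) \hookrightarrow L^{p^*}(\Omega)$ it yields the continuous inclusion $L^2(\Omega) \hookrightarrow W^{-1,p'}(\Omega)$; in the easy cases $d \leq 2$ or $p \geq d$ the inclusion is automatic. The operator $-\Delta_p : W^{1,p}_0(\Omega) \to W^{-1,p'}(\Omega)$ is bounded, hemicontinuous, strictly monotone, and coercive, so the Minty--Browder theorem gives a unique weak solution $u \in W^{1,p}_0(\Omega)$, and testing with $u$ yields the baseline bound $\|\nabla u\|_{L^p}^{p-1} \leq C\|f\|_{L^2}$.

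The heart of the proof is the global second-order Sobolev regularity
\begin{equation*}
\mathbf{A}(\nabla u) := |\nabla u|^{p-2}\nabla u \in W^{1,2}(\Omega;\R^d), \qquad \|\mathbf{A}(\nabla u)\|_{W^{1,2}(\Omega;\R^d)} \leq C \|f\|_{L^2(\Omega)}.
\end{equation*}
Granted this, the Sobolev embedding $W^{1,2}(\Omega) \hookrightarrow L^{2^*}(\Omega)$ for $d \geq 3$ (respectively $\hookrightarrow L^s(\Omega)$ for every $s < \infty$ when $d \leq 2$) gives $|\nabla u|^{p-1} \in L^{2^*}(\Omega)$, i.e., $\nabla u \in L^{2^*(p-1)}(\Omega;\R^d) = L^r(\Omega;\R^d)$, and
\begin{equation*}
\|\nabla u\|_{L^r(\Omega;\R^d)}^{p-1} = \|\mathbf{A}(\nabla u)\|_{L^{2^*}(\Omega;\R^d)} \leq C \|\mathbf{A}(\nabla u)\|_{W^{1,2}(\Omega;\R^d)} \leq C \|f\|_{L^2(\Omega)},
\end{equation*}
which is the desired conclusion. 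For $d \leq 2$ the same chain yields any $r \in (p,\infty)$.

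To obtain the global $W^{1,2}$ bound on $\mathbf{A}(\nabla u)$ I would argue by regularization: consider the uniformly elliptic problem $-\mathrm{div}\bigl((\varepsilon + |\nabla u_\varepsilon|^2)^{(p-2)/2}\nabla u_\varepsilon\bigr) = f$ in $\Omega$ with $u_\varepsilon|_{\partial \Omega}=0$, whose solutions lie in $W^{2,2}(\Omega)$ by classical linear theory. Setting $\mathbf{A}_\varepsilon(\xi) := (\varepsilon + |\xi|^2)^{(p-2)/2}\xi$ and differentiating the equation, one tests with $\Delta u_\varepsilon$ after straightening $\partial\Omega$ in local coordinates to separate tangential and normal derivatives, producing
\begin{equation*}
\int_\Omega \bigl|\nabla \mathbf{A}_\varepsilon(\nabla u_\varepsilon)\bigr|^2 \, \d x \leq C \int_\Omega |f|^2 \, \d x + (\text{boundary curvature terms}),
\end{equation*}
in which the boundary contribution is controlled by the second fundamental form of $\partial\Omega$, hence by its $C^{1,1}$ norm. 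Passing to the limit $\varepsilon \to 0_+$ with weak lower-semicontinuity yields the claim. \textbf{The hard part} is precisely this global estimate up to the boundary: the interior version is classical (Uhlenbeck and successors) via difference quotients, whereas the boundary contribution requires the sharp Cianchi--Maz'ya type analysis that exploits the $C^{1,1}$ regularity of $\partial\Omega$, and I would invoke that estimate rather than reprove it in full.
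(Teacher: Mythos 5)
Your argument is correct, but it takes a genuinely different route from the paper's. The paper never touches second-order regularity: it solves the auxiliary linear problem $\Delta \phi = f$ in $\Omega$, $\phi|_{\partial\Omega} = 0$, rewrites the datum in divergence form $f = \mathrm{div}\left(|F|^{p-2}F\right)$ with $F := |\nabla\phi|^{p'-2}\nabla\phi \in L^{2^*(p-1)}(\Omega;\R^d)$, and then invokes the nonlinear Calder\'on--Zygmund estimate of Kinnunen--Zhou~\cite{KiZh01}, namely $\|\nabla u\|_{L^{r}} \leq C \|F\|_{L^{r}}$ for $r = 2^*(p-1)$ (the hypothesis $p \geq 2d/(d+2)$ enters precisely as $r \geq p$), closing the loop with the linear Calder\'on--Zygmund bound $\|\nabla\phi\|_{L^{2^*}} \lesssim \|\Delta\phi\|_{L^2} = \|f\|_{L^2}$. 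You instead invoke the global second-order estimate $\||\nabla u|^{p-2}\nabla u\|_{W^{1,2}(\Omega;\R^d)} \leq C\|f\|_{L^2(\Omega)}$ of Cianchi--Maz'ya type and conclude by the embedding $W^{1,2}(\Omega) \hookrightarrow L^{2^*}(\Omega)$; your exponent bookkeeping $\|\nabla u\|_{L^{2^*(p-1)}}^{p-1} = \||\nabla u|^{p-2}\nabla u\|_{L^{2^*}}$ is right, a bounded $C^{1,1}$ domain does satisfy the curvature hypotheses of that theory (bounded second fundamental form), and your use of $p \geq 2d/(d+2)$ to secure $L^2(\Omega) \hookrightarrow W^{-1,p'}(\Omega)$ for the Minty--Browder step is exactly where that hypothesis is needed on your route. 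What each approach buys: yours yields a strictly stronger intermediate conclusion (Sobolev regularity of the stress field, not merely higher integrability of the gradient) at the price of a heavier black box whose boundary analysis is delicate, whereas the paper's stays at the level of first-order gradient estimates for divergence-form data and only needs the comparatively lighter $L^r$-gradient theory. In both cases the analytic core is outsourced to an external regularity theorem rather than reproved, so your proposal is on the same methodological footing as the paper's proof.
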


\begin{proof}
We only treat the case where $d \geq 3$, but the case where $d \leq 2$ can also be treated analogously with the continuous embedding $H^1(\Omega) \hookrightarrow L^\rho(\Omega)$ for any $1 \leq \rho < \infty$. Let $\phi \in H^2(\Omega) \cap H^1_0(\Omega)$ be the unique strong solution to the Dirichlet problem,
$$
\Delta \phi = f \ \mbox{ in } \Omega, \quad \phi = 0 \ \mbox{ on } \partial \Omega.
$$
Then it follows from \eqref{p-ell-eq} that
\begin{align*}
 - \Delta_p u &= f = \mathrm{div} \left(\nabla \phi \right) = \mathrm{div} \left( |F|^{p-2}F \right), \quad u = 0 \ \mbox{ on } \partial \Omega,
\end{align*}
where $F \in L^{2^*(p-1)}(\Omega;\R^d)$ is given by
$$
F := |\nabla \phi|^{p'-2}\nabla \phi.
$$
Here we note that $p \leq 2^*(p-1)$ is equivalent to $p \geq 2d/(d+2)$. Therefore by virtue of nonlinear Calder\'on--Zygmund estimates established in~\cite[Theorem 1.6]{KiZh01}, we obtain $\nabla u \in L^{2^*(p-1)}(\Omega;\R^d)$ and
$$
\|\nabla u\|_{L^{2^*(p-1)}(\Omega;\R^d)} \leq C \|F\|_{L^{2^*(p-1)}(\Omega;\R^d)} = C \|\nabla \phi\|_{L^{2^*}(\Omega;\R^d)}^{p'-1}
$$
for some constant $C \geq 0$ independent of $f$. Furthermore, we observe that
\begin{align*}
 \|\nabla \phi\|_{L^{2^*}(\Omega;\R^d)}
 \lesssim \|\nabla \phi\|_{H^1(\Omega;\R^d)}
 \lesssim \|\phi\|_{H^2(\Omega)} \lesssim \|\Delta \phi\|_{L^2(\Omega)} = \|f\|_{L^2(\Omega)}
\end{align*}
due to (linear) Calder\'on--Zygmund estimates. Here $L \lesssim R$ means $L \leq C R$ for some constant $C \geq 0$ independent of $f$. Thus the desired conclusion follows.
\end{proof}

We first apply Theorem \ref{T:LE} to obtain local (in time) existence of $L^2$-solutions to the Cauchy--Dirichlet problem \eqref{pde2}, \eqref{bc2}.

\begin{theorem}[Local existence]\label{T:pLE}
Suppose that
\begin{equation}\label{pq-hyp}
p > \dfrac{2d}{d+2} \quad \mbox{ and } \quad q < p^* := \frac{dp}{(d-p)_+}
\end{equation}
and let $T \in (0,\infty)$. For every $u_0 \in W^{1,p}_0(\Omega)$ and $f \in L^{2}(0,T;L^2(\Omega))$ satisfying 
$$
\esssup_{t \in (0,T)} \left( \int^t_0 k_{1-\alpha}(t-s) \|f(\cdot,s)\|_{L^2(\Omega)}^2 \, \d s \right) < \infty,
$$
there exists $T_0 \in (0,\infty)$ such that the Cauchy--Dirichlet problem \eqref{pde2}, \eqref{bc2} admits an $L^2$-solution $u \in C([0,T_0];L^2(\Omega)) \cap L^\infty(0,T_0;W^{1,p}_0(\Omega))$ on $[0,T_0]$ such that $u(\cdot,0) = u_0$.
\end{theorem}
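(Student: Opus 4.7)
My plan is to recast \eqref{pde2}--\eqref{bc2} as the abstract Cauchy problem \eqref{ee} in $H = L^2(\Omega)$ with $k = k_\alpha$ (hence $\ell = k_{1-\alpha}$) and the functionals $\varphi^1,\varphi^2$ introduced above, and then apply Theorem~\ref{T:LE}. The minimal sections are single-valued: $\mathring{\partial \varphi^1}(w) = -\Delta_p w$ on $D(\partial\varphi^1)$, and $\mathring{\partial \varphi^2}(w) = |w|^{q-2}w$ on $D(\partial\varphi^2)=\{w \in L^2(\Omega)\cap L^q(\Omega):|w|^{q-2}w \in L^2(\Omega)\}$. It remains to verify (A1) and (A2) and to translate the output of Theorem~\ref{T:LE} back into Definition~\ref{D:pde-sol}.

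Assumption (A1) follows at once from the coercivity of $\varphi^1$ on $W^{1,p}_0(\Omega)$ (Poincar\'e) combined with the Rellich--Kondrachov compact embedding $W^{1,p}_0(\Omega) \hookrightarrow\hookrightarrow L^2(\Omega)$. The hypothesis $p > 2d/(d+2)$ is precisely equivalent to $p^* > 2$ when $d \geq 3$, which is what makes this embedding compact; for $d \leq 2$ it is automatic.

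The core of the argument is (A2), namely the growth estimate
\[
\left\| |w|^{q-2}w \right\|_{L^2(\Omega)} = \|w\|_{L^{2(q-1)}(\Omega)}^{q-1} \leq \nu_1 \|\Delta_p w\|_{L^2(\Omega)} + M_1\bigl(\varphi^1(w) + \|w\|_{L^2(\Omega)}\bigr)
\]
for $w \in D(\partial\varphi^1)$. When $p \geq d$ or $2(q-1) \leq p^*$, the standard Sobolev embedding yields the estimate trivially with $\nu_1 = 0$ and $M_1$ a power of $\varphi^1(w)$. The delicate subcase is $p < d$ with $2(q-1) > p^*$, handled via Lemma~\ref{L:pCZ}: from $-\Delta_p w \in L^2(\Omega)$ one obtains $\nabla w \in L^r(\Omega)$ with $r := 2^*(p-1)$ and $\|\nabla w\|_{L^r}^{p-1} \leq C \|\Delta_p w\|_{L^2}$. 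Combining this with the Gagliardo--Nirenberg interpolation
\[
\|w\|_{L^{2(q-1)}} \leq C \|\nabla w\|_{L^r}^{\theta} \|w\|_{L^{p^*}}^{1-\theta}, \qquad \tfrac{1}{2(q-1)} = \theta \bigl(\tfrac{1}{r} - \tfrac{1}{d}\bigr) + \tfrac{1-\theta}{p^*},
\]
and with the Sobolev inequality $\|w\|_{L^{p^*}} \leq C\|\nabla w\|_{L^p}$, I arrive at
\[
\|w\|_{L^{2(q-1)}}^{q-1} \leq C \|\Delta_p w\|_{L^2}^{\sigma} \|\nabla w\|_{L^p}^{(q-1)(1-\theta)}, \qquad \sigma := \frac{\theta(q-1)}{p-1}.
\]
A direct algebraic computation using the identity $p^*(d-p) = dp$ reduces the condition $\sigma < 1$ to exactly $q < p^*$, so Young's inequality yields (A2) with any preassigned $\nu_1 \in (0,1)$ and a remainder controlled by a power of $\varphi^1(w)$.

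Granted (A1), (A2), and the observation that the integrability hypothesis on $f$ reads $\ell * \|f(\cdot)\|_{L^2}^2 \in L^\infty(0,T)$, Theorem~\ref{T:LE} furnishes $T_0 \in (0,T]$ and a strong solution $u \in C([0,T_0];L^2(\Omega))$ with $u(0)=u_0$ and $\varphi^1(u(\cdot)) \in L^\infty(0,T_0)$, which is the same as $u \in L^\infty(0,T_0;W^{1,p}_0(\Omega))$; reading the abstract inclusion pointwise in $L^2(\Omega)$ (using the single-valuedness of the minimal sections) recovers the pde in Definition~\ref{D:pde-sol}. I expect the main obstacle to be the verification of (A2) across the full subcritical range $q < p^*$: the standard Sobolev inequality alone only reaches $q \leq p^*/2 + 1$, so the nonlinear Calder\'on--Zygmund regularity of Lemma~\ref{L:pCZ} is indispensable, and using $L^{p^*}$ (rather than $L^2$) as the lower endpoint in the Gagliardo--Nirenberg interpolation is what makes $q < p^*$ the sharp threshold---replacing $L^{p^*}$ by $L^2$ would only deliver $q < p(d+2)/d$, which is strictly less than $p^*$ under our hypothesis on $p$.
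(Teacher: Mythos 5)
Your proposal is correct and follows essentially the same route as the paper: verify (A1) via Rellich--Kondrachov under $p>2d/(d+2)$, verify (A2) by splitting on whether $2(q-1)\leq p^*$ and, in the supercritical subcase, combining the nonlinear Calder\'on--Zygmund estimate of Lemma~\ref{L:pCZ} with the Gagliardo--Nirenberg interpolation between $\|\nabla w\|_{L^{2^*(p-1)}}$ and $\|w\|_{L^{p^*}}$ so that the exponent $\theta(q-1)/(p-1)<1$ exactly when $q<p^*$, and then invoke Theorem~\ref{T:LE}. The only differences are cosmetic (your case split mentions $p\geq d$ separately and you spell out the Young-inequality absorption), so no further comment is needed.
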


\begin{proof}
We note that $k = k_\alpha$ satisfies (K) with $\ell = k_{1-\alpha}$ (see \S \ref{S:Intro}). We next check (A1) and (A2) for $\varphi^1$ and $\varphi^2$ defined above. First of all, (A1) follows immediately from Rellich--Kondrachov's theorem (i.e., $W^{1,p}_0(\Omega)$ is compactly embedded in $L^2(\Omega)$, provided that $p > 2d/(d+2)$). We next check (A2). In case $2(q-1) \leq p^*$, one finds that
$$
\left\|\partial \varphi^2(w) \right\|_H
= \left\| |w|^{q-2}w \right\|_{L^2(\Omega)} \leq C \| \nabla w \|_{L^p(\Omega;\R^d)}^{q-1} = C \{ \varphi^1(w) \}^{(q-1)/p}
$$
for $w \in W^{1,p}_0(\Omega)$. In case $2(q-1) > p^*$, let us consider $d \geq 3$. As $q \leq p^*$ and $2d/(d+2) < p$, we see that $2(q-1) \leq 2(p^* - 1) < [2^*(p-1)]^* = 2d(p-1)/(d-2p)_+$. Hence by virtue of Gagliardo--Nirenberg's inequalities along with Lemma \ref{L:pCZ}, one can take a constant $\theta \in (0,1)$ such that
\begin{align}
\left\|\partial \varphi^2(w) \right\|_H
&= \left\| w \right\|_{L^{2(q-1)}(\Omega)}^{q-1} \nonumber \\
&\leq C \| \nabla w \|_{L^{2^*(p-1)}(\Omega;\R^d)}^{\theta(q-1)} \| w \|_{L^{p^*}(\Omega)}^{(1-\theta)(q-1)} \nonumber\\
&\leq C \|\Delta_p w\|_{L^2(\Omega)}^{\theta(q-1)/(p-1)}\{ \varphi^1(w) \}^{(1-\theta)(q-1)/p} \label{a2c*}
\end{align}
for $w \in D(\partial \varphi^1)$. Here one can observe that
$$
\theta (q-1)/(p-1) < 1 \quad \mbox{ if and only if } \quad q < p^*.
$$ 
Hence (A2) follows from \eqref{pq-hyp}, and analogously for $d \leq 2$. Thus applying Theorem \ref{T:LE} to \eqref{ee-pde}, we obtain the desired conclusion.
\end{proof}

In case $p < q$, the following theorem assures global (in time) existence of $L^2$-solutions to \eqref{pde2}, \eqref{bc2} for small data for $q \in (p,p^*]$ even including the Sobolev critical exponent $p^*$.

\begin{theorem}[Global existence for small data and $p < q$]\label{T:p<q:SDGE}
In addition to $p < q$, suppose that
\begin{equation*}
p > \dfrac{2d}{d+2} \quad \mbox{ and } \quad q \leq p^* = \frac{dp}{(d-p)_+}.  
\end{equation*}
Then there exists $\delta_0 > 0$ such that, for every $u_0 \in W^{1,p}_0(\Omega)$ and $f \in L^{2}_{\rm loc}([0,\infty);L^2(\Omega))$, the Cauchy--Dirichlet problem \eqref{pde2}, \eqref{bc2} possesses an $L^2$-solution $u$ on $[0,\infty)$ satisfying
\begin{equation*}
u \in C([0,\infty);L^2(\Omega)) \cap L^\infty(0,\infty;W^{1,p}_0(\Omega)), \quad u(\cdot,0) = u_0,
\end{equation*}
provided that 
$$
\|u_0\|_{W^{1,p}_0(\Omega)}^p + \esssup_{t > 0} \left( \int^t_0 k_{1-\alpha}(t-s) \|f(\cdot,s)\|_{L^2(\Omega)}^2 \, \d s \right) < \delta_0.
$$
\end{theorem}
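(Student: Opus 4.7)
The plan is to reformulate \eqref{pde2}--\eqref{bc2} as the abstract Cauchy problem \eqref{ee-pde} in $H=L^2(\Omega)$ with the functionals $\varphi^1,\varphi^2$ defined just above \eqref{ee-pde}, and then apply Theorem~\ref{T:SDGE}(ii) when $q<p^*$ or Corollary~\ref{C:SDGEc}(ii) when $q=p^*$. Since $k=k_\alpha$ satisfies ({K}) with $\ell=k_{1-\alpha}$ and both $\varphi^i\geq 0$ by construction, what remains is to verify (A1)$'$ together with either (A2)$'$ (subcritical case) or (A2)$'_c$ (critical case).

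Assumption (A1)$'$ is immediate: $\{\varphi^1\leq r\}$ is a bounded subset of $W^{1,p}_0(\Omega)$, which embeds compactly into $L^2(\Omega)$ by Rellich--Kondrachov since $p>2d/(d+2)$. The inclusion $D(\partial\varphi^1)\subset D(\partial\varphi^2)$ is handled as in the proof of Theorem~\ref{T:pLE}: for $w\in D(\partial\varphi^1)$, Lemma~\ref{L:pCZ} gives $\nabla w\in L^{2^*(p-1)}(\Omega;\R^d)$, hence $w\in L^{[2^*(p-1)]^*}(\Omega)$, and since $2(q-1)\leq 2(p^*-1)<[2^*(p-1)]^*$ under the hypotheses, $|w|^{q-2}w\in L^2(\Omega)$. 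The lower estimate $m_2(\varphi^1(w))\leq\|\mathring{\partial\varphi^1}(w)\|_H$ follows from the coercivity $c_0\|w\|_{L^2}^p\leq\varphi^1(w)$ (via Poincar\'e--Sobolev, noting $p^*\geq 2$) combined with Remark~\ref{R:A-coer}, yielding $m_2(s)=c_0^{1/p}s^{1-1/p}$.

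The heart of the proof is the upper estimate on $\partial\varphi^2$. Just as in the proof of Theorem~\ref{T:pLE}, by distinguishing the cases $2(q-1)\leq p^*$ and $2(q-1)>p^*$, a Gagliardo--Nirenberg interpolation combined with Lemma~\ref{L:pCZ} produces
\begin{equation*}
\|\mathring{\partial\varphi^2}(w)\|_H=\|w\|_{L^{2(q-1)}}^{q-1}\leq C\,\|\mathring{\partial\varphi^1}(w)\|_H^{\mu}\,\{\varphi^1(w)\}^{\sigma},
\end{equation*}
with $\mu:=\theta(q-1)/(p-1)$ and $\sigma:=(1-\theta)(q-1)/p$ for an explicit $\theta\in(0,1)$ determined by the scaling identity of GN (and $\mu=0$ when $2(q-1)\leq p^*$). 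The crucial algebraic identity, which I plan to check by direct computation using $\theta=\bigl[1/(2(p^*-1))-1/p^*\bigr]/\bigl[1/s^*-1/p^*\bigr]$ with $s^*:=[2^*(p-1)]^*$, is that $\mu=1$ exactly at $q=p^*$; this yields precisely the form (A2)$'_c$ with $M_3(r)=Cr^{\sigma}$. When $q<p^*$, instead $\mu<1$, and Young's inequality converts the displayed estimate into (A2)$'$ with arbitrarily small $\nu_2$ and $M_2(r)=C_{\nu_2}r^{\sigma/(1-\mu)}$. In either case, the decay conditions $M_3(r)\to 0$ or $M_2(r)/m_2(r)\to 0$ as $r\to 0_+$ reduce, after a short exponent computation, to the strict subhomogeneity $q>p$ assumed in the theorem.

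With all hypotheses verified, Theorem~\ref{T:SDGE}(ii) (resp.\ Corollary~\ref{C:SDGEc}(ii)) produces a strong solution $u\in C([0,\infty);L^2(\Omega))$ to \eqref{ee-pde} with $u(0)=u_0$ and $\sup_{t\geq 0}\varphi^1(u(t))<\infty$, i.e., $u\in L^\infty(0,\infty;W^{1,p}_0(\Omega))$, whenever
\begin{equation*}
E_\infty(u_0,f)=\varphi^1(u_0)+\bigl\|\ell*\|f(\cdot)\|_H^2\bigr\|_{L^\infty(0,\infty)}<\delta_0.
\end{equation*}
Since $\varphi^1(u_0)=\tfrac{1}{p}\|u_0\|_{W^{1,p}_0(\Omega)}^p$ and $\ell=k_{1-\alpha}$, this matches the smallness hypothesis of the theorem after adjusting $\delta_0$ cosmetically. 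The main obstacle is the algebraic identity $\mu=1$ at $q=p^*$: this precise cancellation is what allows the abstract small-data machinery to accommodate the Sobolev-critical exponent uniformly with the subcritical regime.
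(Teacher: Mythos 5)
Your proposal is correct and follows essentially the same route as the paper: reduce to the abstract problem \eqref{ee-pde}, verify ${\rm (A1)}'$ via Rellich--Kondrachov, obtain \eqref{A2p-1} with $m_2(r)=Cr^{1-1/p}$, establish the interpolation bound \eqref{a2c*} via Gagliardo--Nirenberg and Lemma \ref{L:pCZ}, convert it by Young's inequality into ${\rm (A2)}'$ when $q<p^*$ (where $\theta(q-1)/(p-1)<1$) and observe that it is exactly ${\rm (A2)}'_c$ when $q=p^*$ (where $\theta(q-1)/(p-1)=1$), and conclude by Theorem \ref{T:SDGE}(ii) resp.\ Corollary \ref{C:SDGEc}. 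The only cosmetic difference is that you derive $m_2$ from the coercivity of $\varphi^1$ via Remark \ref{R:A-coer}, whereas the paper uses $\|\Delta_p w\|_{L^2}\geq C\|\Delta_p w\|_{W^{-1,p'}}=C\|\nabla w\|_{L^p}^{p-1}$; both yield the same exponent.
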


\begin{proof}
We start with the case where $q < p^*$. We observe that
\begin{align*}
\left\|\partial \varphi^1(w)\right\|_H 
&= \|\Delta_p w\|_H\\
&\geq C\|\Delta_p w\|_{W^{-1,p'}(\Omega)}\\
&= C \|\nabla w\|_{L^p(\Omega;\R^d)}^{p-1} = C \{ \varphi^1(w) \}^{(p-1)/p}
\end{align*}
for $w \in D(\partial \varphi^1)$. Hence \eqref{A2p-1} of ${\rm (A2)}'$ holds with $m_2(r) := C r^{(p-1)/p} > 0$ for $r > 0$. In case $2(q-1) \leq p^*$,  ${\rm (A2)}'$ holds with $M_2(r) := C r^{(q-1)/p}$ for $r \geq 0$. Hence $M_2(r)/m_2(r) = C r^{(q-p)/p} \to 0$ as $r \to 0_+$ because of $q > p$. In case $2(q-1) > p^*$, let us consider $d \geq 3$. Then one can check \eqref{A2p-2} of ${\rm (A2)}'$ with
$$
M_2(r) = C r^{\frac 1p \frac{(1-\theta)(q-1)}{1-\theta(q-1)/(p-1)}} \quad \mbox{ for } \ r \geq 0,
$$ 
where $\theta \in (0,1)$ is same as in the proof of Theorem \ref{T:pLE}. Then we find from $p < q < p^*$ that
$$
\lim_{r \to 0_+} \frac{M_2(r)}{m_2(r)} = C \lim_{r \to 0_+} r^{\frac 1p \frac{q-p}{1-\theta(q-1)/(p-1)}} = 0,
$$
which yields ${\rm (A2)}'$. Thus the desired conclusion follows from Theorem \ref{T:SDGE}. As for the Sobolev critical case where $q = p^*$, we can still use \eqref{a2c*}, and moreover, we observe that
$$
\theta (q-1)/(p-1) = 1.
$$
Hence ${\rm (A2)}'_c$ follows from \eqref{a2c*}. Furthermore, the case $d \leq 2$ can also be treated analogously. Therefore the same conclusion follows from Corollary \ref{C:SDGEc}.
\end{proof}

In case $p > q$, one can apply Theorem \ref{T:GE} to prove existence of $L^2$-solutions on $[0,\infty)$ to \eqref{pde2}, \eqref{bc2}. 
\begin{theorem}[Global existence for $p > q$]\label{T:p>q}
In addition to $p > q$, suppose that \eqref{pq-hyp} holds true. For every $u_0 \in W^{1,p}_0(\Omega)$ and $f \in W^{1,2}_{\rm loc}([0,\infty);L^2(\Omega))$, the Cauchy--Dirichlet problem \eqref{pde2}, \eqref{bc2} admits an $L^2$-solution $u$ on $[0,\infty)$ satisfying
\begin{equation*}
u \in C([0,\infty);L^2(\Omega)) \cap L^\infty_{\rm loc}([0,\infty);W^{1,p}_0(\Omega)), \quad u(\cdot,0) = u_0.
\end{equation*}
\end{theorem}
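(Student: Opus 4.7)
The plan is to recast \eqref{pde2}, \eqref{bc2} as the abstract Cauchy problem \eqref{ee-pde} in $H = L^2(\Omega)$, with the convex functionals $\varphi^1,\varphi^2$ introduced before Lemma \ref{L:pCZ}, and then to invoke the large-data global existence result, Theorem \ref{T:GE}. Kernel hypothesis (K) holds for $k=k_\alpha$ with $\ell = k_{1-\alpha}$ as observed in \S \ref{S:Intro}, and assumptions (A1) and (A2) have already been verified in the proof of Theorem \ref{T:pLE} under the standing conditions $p>2d/(d+2)$ and $q<p^{*}$: the former via Rellich--Kondrachov, the latter via the Gagliardo--Nirenberg / nonlinear Calder\'on--Zygmund bound \eqref{a2c*} combined with Young's inequality (the Young step is permissible precisely because $q<p^{*}$ forces $\theta(q-1)/(p-1)<1$). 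Hence the only new point is (A3).

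To verify (A3), the idea is that $q<p$ trivializes the embedding: since $\Omega$ is bounded, H\"older's inequality and Poincar\'e's inequality give, for every $w\in W^{1,p}_{0}(\Omega)$,
\begin{equation*}
\|w\|_{L^{q}(\Omega)} \;\leq\; |\Omega|^{1/q-1/p}\,\|w\|_{L^{p}(\Omega)} \;\leq\; C\,\|\nabla w\|_{L^{p}(\Omega;\R^{d})}
\;=\; C\,\{p\,\varphi^{1}(w)\}^{1/p}.
\end{equation*}
Raising to the $q$-th power and applying Young's inequality in the form $s^{q/p}\le \varepsilon s+C_{\varepsilon}$ for $s\ge 0$ (valid since $q/p<1$), one obtains, for arbitrarily small $\varepsilon>0$,
\begin{equation*}
\varphi^{2}(w) \;=\; \tfrac{1}{q}\,\|w\|_{L^{q}(\Omega)}^{q} \;\leq\; C'\,\{\varphi^{1}(w)\}^{q/p} \;\leq\; \nu_{3}\,\varphi^{1}(w) + c_{1}
\end{equation*}
with $\nu_{3}\in[0,1)$ and $c_{1}\geq 0$. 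This is precisely (A3) with $r=0$.

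With (K), (A1), (A2) and (A3) in hand, Theorem \ref{T:GE} applies. For every $u_{0}\in D(\varphi^{1})=W^{1,p}_{0}(\Omega)$ (using $W^{1,p}_{0}(\Omega)\subset L^{2}(\Omega)$, which holds thanks to $p>2d/(d+2)$) and every $f\in W^{1,2}_{\rm loc}([0,\infty);L^{2}(\Omega))$, it produces a strong solution $u\in C([0,\infty);L^{2}(\Omega))$ to \eqref{ee-pde} on $[0,\infty)$ with $u(0)=u_{0}$ and $\varphi^{1}(u(\cdot))\in L^{\infty}(0,T)$ for every $T>0$. Unwinding the definitions, this $u$ is an $L^{2}$-solution of \eqref{pde2}, \eqref{bc2} in the sense of Definition \ref{D:pde-sol}, and the bound $\varphi^{1}(u(\cdot))\in L^{\infty}(0,T)$ is exactly $u\in L^{\infty}(0,T;W^{1,p}_{0}(\Omega))$.

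No step of this argument is a genuine obstacle; the heavy lifting lives in the abstract theory (Theorem \ref{T:GE}) and in (A2), which was already handled for Theorem \ref{T:pLE}. The only point that has to be checked afresh, (A3), is the mildest: because $q<p$ the superlinear term is genuinely dominated by the principal part in an energy sense, and the corresponding control costs only a multiplicative constant rather than a loss in the gradient exponent.
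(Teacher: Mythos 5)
Your proposal is correct and follows essentially the same route as the paper: (A1) and (A2) are inherited from the verification in Theorem \ref{T:pLE}, (A3) is checked via $\varphi^2(w)\leq C\{\varphi^1(w)\}^{q/p}$ with $q/p<1$, and Theorem \ref{T:GE} is then applied to \eqref{ee-pde}. The only difference is cosmetic: you spell out the Young-inequality absorption $s^{q/p}\leq \varepsilon s + C_\varepsilon$ that the paper leaves implicit in the phrase ``which along with the assumption $p>q$ yields (A3).''
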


\begin{proof}
We have already checked (A1) and (A2) in the proof of Theorem \ref{T:pLE}. Moreover, we find that
$$
\varphi^2(w) = \dfrac 1q \|w\|_{L^q(\Omega)}^q \leq C \|\nabla w\|_{L^p(\Omega;\R^d)}^{q} = C \{ \varphi^1(w) \}^{q/p}
$$
for $w \in W^{1,p}_0(\Omega)$, which along with the assumption $p > q$ yields (A3). Thus applying Theorem \ref{T:GE} to \eqref{ee-pde}, we obtain the desired conclusion.
\end{proof}

We close this section with the following

\begin{remark}[On classical $p$-Laplace Fujita equation]\label{R:pCZ2}
The nonlinear Calder\'on--Zygmund estimates (see Lemma \ref{L:pCZ}) also enable us to upgrade all the existence results obtained in~\cite{Ishii77,Otani79,Otani82,Otani84} concerning \eqref{pde-c} to the Sobolev subcritical range $1 < q < p^*$ even for $p \neq 2$. Furthermore, following the same argument as in the proof of Corollary \ref{C:SDGEc}, one may also prove global (in time) existence of $L^2$-solutions for small data even for the Sobolev critical exponent $q = p^*$.
\end{remark}

\appendix

\section{Existence of strong solutions to \eqref{ee-aprx2}}\label{Ss:aprx2}

This appendix is devoted to exhibiting an outline of a proof for the existence of strong solutions to the Cauchy problem \eqref{ee-aprx2}. We also refer the reader to~\cite{A19}, where the case $\lambda = 0$ is treated and which may also be useful to handle the present case $\lambda > 0$.\footnote{The present case $\lambda > 0$ is much simpler to the case $\lambda = 0$. Actually, due to the presence of the first-order derivative, one can get better a priori estimates for (approximate) solutions in a standard manner.} Let $T \in (0,\infty)$ be fixed. Then the Cauchy problem \eqref{ee-aprx2} is rewritten as
\begin{equation}\label{aux}
\lambda \A(u - u_0) + \B(u - u_0) + \partial \Phi(u) + B(u(\cdot)) \ni f
\end{equation}
for \emph{positive} $\lambda \in (0,1)$, $u_0 \in D(\varphi)$, $f \in L^2(0,T;H)$ and $B : H \to H$ satisfying \eqref{Lip} (see Subsections \ref{Ss:subdif} and \ref{Ss:fracderi} for the definition of $\A$, $\B$ and $\Phi$). To prove the existence of solutions to \eqref{aux}, we set $\mathcal{X} = C([0,T];H)$ equipped with the norm $\|u\|_\mathcal{X} := \sup_{t \in [0,T]} \e^{-\omega t} \|u(t)\|_H$ for some $\omega > 0$ which will be determined later and define a mapping $S : \mathcal{X} \to \mathcal{X}$ given by
$$
S(v) = u \quad \mbox{ for } \ v \in \mathcal{X},
$$
where $u \in D(\A) + u_0$ is the unique solution of
\begin{equation}\label{aux-p}
\lambda \A(u - u_0) + \B(u - u_0) + \partial \Phi(u) \ni f - B(v(\cdot)).
\end{equation}
Hence it suffices to find a fixed point of $S$. We first show that, for each $v \in L^2(0,T;H)$, \eqref{aux-p} admits a unique solution $u \in D(\A) + u_0$. To this end, let us introduce
\begin{equation}\label{aux-p-aprx}
\lambda \A(u_\mu - u_0) + \B(u_\mu - u_0) + \partial \Phi_\mu(u) \ni f - B(v(\cdot)).
\end{equation}
Then for each $\mu \in (0,1)$ and $v \in L^2(0,T;H)$, the Cauchy problem \eqref{aux-p-aprx} always possesses a unique solution $u_\mu \in D(\A) + u_0$. Indeed, since $\lambda \A + \B$ is maximal monotone in $L^2(0,T;H)$ (see Proposition \ref{P:AB}) and $D(\partial \Phi_\mu) = L^2(0,T;H)$, one finds that $\lambda \A + \B + \partial \Phi_\mu(\,\cdot\, + u_0)$ is surjective on $L^2(0,T;H)$ (see, e.g.,~\cite{HB1},~\cite{B}). Hence as $f - B(v(\cdot)) \in L^2(0,T;H)$, there exists $u_\mu \in D(\A) + u_0$ such that \eqref{aux-p-aprx} holds. As in the proof of Theorem 2.3 of~\cite{A19}, testing \eqref{aux-p-aprx} by $\d u/\d t$ and noting that
\begin{align}
\frac 12 \frac{\d}{\d t} \|w(t)\|_H^2 
&= \left( \dfrac{\d w}{\d t}(t), w(t) \right)_H \nonumber\\
&\leq \left\| \dfrac{\d w}{\d t}(t) \right\|_H \|w(t)\|_H \quad \mbox{ for a.e. } \ t \in (0,T)\label{Lei}
\end{align}
for $w \in W^{1,2}(0,T;H)$, we can derive
$$
\sup_{\mu \in (0,1)} \left( \lambda \int^t_0 \left\| \dfrac{\d u_\mu}{\d t}(\tau) \right\|_H^2 \, \d \tau + \varphi(u_\mu(t)) \right) \leq C
$$
for some $C \geq 0$, which may depend on $\lambda > 0$ but is independent of $\mu \in (0,1)$. Furthermore, as $\lambda$ is positive, one can prove that $(u_\mu)$ forms a Cauchy sequence in $C([0,T];H)$ by using \eqref{Lei} again. Thus we can verify that
\begin{alignat*}{4}
 u_{\mu_n} &\to u \quad &&\mbox{ strongly in } C([0,T];H),\\
 u_{\mu_n} &\to u \quad &&\mbox{ weakly in } W^{1,2}(0,T;H)
\end{alignat*}
for some $\mu_n \to 0_+$ and $u \in W^{1,2}(0,T;H)$ satisfying $u(0) = u_0$, and then, as in the proof of Theorem 2.3 of~\cite{A19}, the limit $u$ turns out to be a solution to \eqref{aux-p}.
 
We next claim that choosing $\omega > 0$ large enough, one can take a constant $\kappa \in (0,1)$ such that
\begin{equation}\label{contraction}
 \|S(v_1)-S(v_2)\|_\mathcal{X} \leq \kappa \|v_1-v_2\|_\mathcal{X} \quad \mbox{ for } \ v_1,v_2 \in \mathcal{X}.
\end{equation}
Then by virtue of Banach's contraction mapping principle, we can deduce that $S$ admits a unique fixed point $u \in \mathcal{X}$, i.e., $S(u) = u$, which implies that $u$ is a solution to \eqref{aux}. Moreover, the uniqueness of solutions to \eqref{aux} follows as well. Indeed, set $u_1 = S(v_1)$ and $u_2 = S(v_2)$. Subtracting equations and multiplying it by $u_1-u_2$, we have
 \begin{align*}
  \lefteqn{
 \dfrac{\lambda}2 \dfrac{\d}{\d t} \|u_1(t)-u_2(t)\|_H^2 + \left(\B(u_1-u_2)(t), u_1(t)-u_2(t)\right)_H
  }\\
  &\quad + \left(\partial \varphi(u_1(t))- \partial \varphi(u_2(t)),u_1(t)-u_2(t)\right)_H\\
  &= - \left( B(v_1(t)) - B(v_2(t)), u_1(t)-u_2(t) \right)_H
 \end{align*}
for a.e.~$t \in (0,T)$. Integrating both sides over $(0,t)$ and using the monotonicity of $\B$ and $\partial \varphi$, we find that
\begin{align*}
\dfrac \lambda 2 \|u_1(t)-u_2(t)\|_H^2
 &\leq L_B \int^t_0 \|v_1(s)-v_2(s)\|_H \|u_1(s)-u_2(s)\|_H \, \d s\\
 &\leq L_B \|v_1-v_2\|_\mathcal{X} \|u_1-u_2\|_\mathcal{X} \int^t_0 \e^{2\omega s} \, \d s\\
 &\leq \dfrac{L_B}{2\omega} \e^{2\omega t}\|v_1-v_2\|_\mathcal{X} \|u_1-u_2\|_\mathcal{X} \quad \mbox{ for } \ t \in [0,T],
 \end{align*}
 which yields
 $$
 \|u_1-u_2\|_\mathcal{X} \leq \dfrac{L_B}{\omega \lambda} \|v_1-v_2\|_\mathcal{X}.
 $$
 Therefore choosing $\omega > L_B/\lambda$ and setting $\kappa := L_B/(\omega\lambda) \in (0,1)$, we obtain \eqref{contraction}. Thus we conclude that

\begin{proposition}
Let $T \in (0,\infty)$ and $\lambda \in (0,1)$ be fixed. For every $f \in L^2(0,T;H)$ and $u_0 \in D(\varphi)$, the Cauchy problem \eqref{aux} admits a unique strong solution $u \in W^{1,2}(0,T;H) \cap D(\partial \Phi)$ satisfying $u(0) = u_0$.
 \end{proposition}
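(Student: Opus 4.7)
The plan is to reduce \eqref{aux} to a fixed-point problem on $\mathcal{X} := C([0,T];H)$ equipped with the weighted norm $\|u\|_\mathcal{X} := \sup_{t \in [0,T]} \e^{-\omega t}\|u(t)\|_H$, for $\omega > 0$ to be chosen later. For each $v \in \mathcal{X}$ I would define $S(v) := u$ to be the unique solution of the auxiliary problem
$$
\lambda \A(u-u_0) + \B(u-u_0) + \partial \Phi(u) \ni f - B(v(\cdot)),
$$
so that a fixed point of $S$ yields a strong solution to \eqref{aux}. A priori the exponentially weighted norm is equivalent to the sup-norm, so $\mathcal{X}$ remains a Banach space.

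First I would show $S$ is well-defined. For fixed $v$, I replace $\partial \Phi$ by its Moreau--Yosida regularization $\partial \Phi_\mu$ and solve
$$
\lambda \A(u_\mu - u_0) + \B(u_\mu - u_0) + \partial \Phi_\mu(u_\mu) = f - B(v(\cdot)).
$$
Since $\lambda \A + \B$ is (linear) maximal monotone in $L^2(0,T;H)$ by Proposition~\ref{P:AB} and $\partial \Phi_\mu$ is everywhere defined and Lipschitz, the perturbed operator $\lambda \A + \B + \partial \Phi_\mu(\,\cdot\,+u_0)$ is surjective onto $L^2(0,T;H)$, yielding $u_\mu \in D(\A) + u_0 \subset W^{1,2}(0,T;H)$. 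Testing with $\d u_\mu/\d t$, invoking Proposition~\ref{P:AB} to control the $\B$-term from below and the classical chain rule (Proposition~\ref{P:std_chain}) on $\varphi_\mu$, I would obtain bounds
$$
\lambda \left\| \tfrac{\d u_\mu}{\d t} \right\|_{L^2(0,T;H)}^2 + \sup_{t \in [0,T]} \varphi_\mu(u_\mu(t)) \leq C(\lambda)
$$
uniform in $\mu$. Along a subsequence $\mu_n \to 0_+$ one has $u_{\mu_n} \to u$ weakly in $W^{1,2}(0,T;H)$ and strongly in $C([0,T];H)$; demiclosedness of $\partial \Phi$ in $L^2(0,T;H)$ and weak closedness of $\A+\B$ then identify the limit as a solution of the auxiliary problem with $u(0)=u_0$. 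Uniqueness follows immediately from monotonicity of $\B$ and $\partial \Phi$ combined with the coercive term $\lambda \A$: subtracting two solutions and pairing with $u_1-u_2$ gives $(\lambda/2)(\d/\d t)\|u_1-u_2\|_H^2 \leq 0$.

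Next I would verify the contraction estimate \eqref{contraction}. For $u_i := S(v_i)$, subtracting the two equations and pairing with $u_1-u_2$ yields, after dropping the nonnegative $\B$ and $\partial \varphi$ contributions via monotonicity,
$$
\tfrac{\lambda}{2}\tfrac{\d}{\d t}\|u_1(t)-u_2(t)\|_H^2 \leq L_B\,\|v_1(t)-v_2(t)\|_H\,\|u_1(t)-u_2(t)\|_H
$$
for a.e.~$t \in (0,T)$. Integrating on $(0,t)$, bounding the right-hand side by $L_B\|v_1-v_2\|_\mathcal{X}\|u_1-u_2\|_\mathcal{X}\int_0^t \e^{2\omega s}\,\d s$ and dividing both sides by $\e^{2\omega t}$ produces
$$
\|u_1-u_2\|_\mathcal{X} \leq \tfrac{L_B}{\omega \lambda}\|v_1-v_2\|_\mathcal{X}.
$$
Choosing $\omega > L_B/\lambda$ gives a strict contraction with constant $\kappa := L_B/(\omega \lambda) \in (0,1)$, and Banach's fixed-point theorem produces the desired unique solution of \eqref{aux}.

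The principal obstacle is the well-posedness of the auxiliary problem with unregularized $\partial \Phi$: one must ensure that both the subdifferential relation $\partial \Phi(u) \ni \cdots$ and the nonlocal structure encoded by $\B$ (in particular the initial trace $[k*(u-u_0)](0)=0$ and the $W^{1,2}$-regularity of $k*(u-u_0)$) survive the Moreau--Yosida limit. Here the linear maximal monotonicity of $\B$ on $L^2(0,T;H)$ is crucial, as it provides weak closedness of its graph. Compared with the singular case $\lambda = 0$ treated in~\cite{A19}, the parabolic regularization $\lambda \A$ considerably simplifies matters: uniform $W^{1,2}$-bounds on $u_\mu$ are direct, and the contraction argument in $\mathcal{X}$ avoids the more delicate nonlocal Gronwall-type machinery needed without it.
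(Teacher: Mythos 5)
Your proposal is correct and follows essentially the same route as the paper: the same weighted-norm fixed-point scheme on $C([0,T];H)$, the same Moreau--Yosida regularization and surjectivity argument for the auxiliary problem, and the same contraction constant $L_B/(\omega\lambda)$. The only point you assert rather than prove is the strong convergence $u_{\mu_n} \to u$ in $C([0,T];H)$, which does not follow from the weak $W^{1,2}$ bound alone (there is no compactness hypothesis here); the paper obtains it by showing $(u_\mu)$ is a Cauchy sequence in $C([0,T];H)$, exploiting the monotonicity of $\B$ and $\partial\Phi$ together with the positive $\lambda\A$ term.
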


Therefore for each $T \in (0,\infty)$, $\lambda \in (0,1)$, $u_0 \in D(\varphi)$ and $f \in L^2(0,T;H)$, the Cauchy problem \eqref{ee-aprx2} admits a unique strong solution $u_\lambda \in W^{1,2}(0,T;H)$ on $[0,T]$. Concerning the case where $T = \infty$, as in the proof of Theorem \ref{T:Lip}, one can verify that for each $\lambda \in (0,1)$, $u_0 \in D(\varphi)$ and $f \in L^2_{\rm loc}([0,\infty);H)$, the Cauchy problem \eqref{ee-aprx2} also admits a unique strong solution $u_\lambda \in W^{1,2}_{\rm loc}([0,\infty);H)$ on $[0,\infty)$.

\bibliographystyle{plain}
\bibliography{bibliography}

\end{document}